\def\isarxiv{1}
\let\C\relax
\newtheorem{theorem}{Theorem}[section]
\newtheorem{lemma}[theorem]{Lemma}
\newtheorem{definition}[theorem]{Definition}
\newtheorem{corollary}[theorem]{Corollary}
\newtheorem{conjecture}[theorem]{Conjecture}
\newtheorem{observation}[theorem]{Observation}
\newtheorem{remark}[theorem]{Remark}
\newtheorem{claim}[theorem]{Claim}
\newtheorem{fact}[theorem]{Fact}
\newcommand{\wt}{\widetilde}
\newcommand{\ov}{\overline}
\newcommand{\R}{\mathbb{R}}
\renewcommand{\d}{\mathrm{d}}
\newcommand{\diag}{\textrm{diag}}
\renewcommand{\d}{\mathrm{d}}
\DeclareMathOperator*{\E}{{\mathbb{E}}}
\DeclareMathOperator*{\C}{\mathbb{C}}
\DeclareMathOperator{\supp}{supp}
\DeclareMathOperator{\rank}{rank}
\DeclareMathOperator{\tr}{tr}
\definecolor{mygreen}{RGB}{80,180,0}
\definecolor{b2}{RGB}{51,153,255}
\newcommand{\Ruizhe}[1]{{\color{red}[Ruizhe: #1]}}
\newcommand{\Zhao}[1]{{\color{b2}[Zhao: #1]}}
\newcommand{\nc}{\newcommand}
\nc{\nnl}{\nn \\ &}  
\nc{\fot}{\frac{1}{2}} 
\nc{\oo}[1]{\frac{1}{#1}} 
\newcommand{\ben}{\begin{enumerate}}
\newcommand{\een}{\end{enumerate}}
\nc{\mc}{\mathcal}
\nc{\onenorm}[1]{\L\| #1 \R\|_1} 
\title{Hyperbolic Concentration, Anti-concentration, and Discrepancy}
\author{
Zhao Song\thanks{\texttt{zsong@adobe.com}. Adobe Research.} 
\and
Ruizhe Zhang\thanks{\texttt{ruizhe@utexas.edu}. Department of Computer Science, University of Texas at Austin.}
}
\date{}
\author{Zhao Song}{Adobe Research, Seattle, WA, USA}{zsong@adobe.com}{}{}
\author{Ruizhe Zhang}{The University of Texas at Austin, Austin, TX, USA}{ruizhe@utexas.edu}{}{This work was supported by Dana Moshkovitz's NSF Grant CCF-1648712.}
\authorrunning{Z. Song and R. Zhang}
\keywords{Hyperbolic polynomial, Chernoff bound, Concentration, Discrepancy theory, Anti-concentration}
\begin{document}
\ifdefined\isarxiv
\begin{titlepage}
  \maketitle
  \begin{abstract}
  Chernoff bound is a fundamental tool in theoretical computer science. It has been extensively used in randomized algorithm design and stochastic type analysis.  Discrepancy theory, which deals with finding a bi-coloring of a set system such that the coloring of each set is balanced, has a huge number of applications in approximation algorithms design. Chernoff bound [Che52] implies that a random bi-coloring of any set system with $n$ sets and $n$ elements will have discrepancy $O(\sqrt{n \log n})$ with high probability, while the famous result by Spencer [Spe85] shows that there exists an $O(\sqrt{n})$ discrepancy solution. 

The study of hyperbolic polynomials dates back to the early 20th century when used to solve PDEs by G{\aa}rding [G{\aa}r59]. In recent years, more applications are found in control theory, optimization, real algebraic geometry, and so on. In particular, the breakthrough result by Marcus, Spielman, and Srivastava [MSS15] uses the theory of hyperbolic polynomials to prove the Kadison-Singer conjecture [KS59], which is closely related to discrepancy theory.  

In this paper, we present a list of new results for hyperbolic polynomials:

\begin{itemize}
    \item We show two nearly optimal hyperbolic Chernoff bounds: one for Rademacher sum of arbitrary vectors and another for random vectors in the hyperbolic cone.
    \item We show a hyperbolic anti-concentration bound.
    \item We generalize the hyperbolic Kadison-Singer theorem [Br{\"a}18] for vectors in sub-isotropic position, and prove a hyperbolic Spencer theorem for any constant hyperbolic rank vectors.
\end{itemize}

The classical matrix Chernoff and discrepancy results are based on determinant polynomial which is a special case of hyperbolic polynomials. To the best of our knowledge, this paper is the first work that shows either concentration or anti-concentration results for hyperbolic polynomials. We hope our findings provide more insights into hyperbolic and discrepancy theories.

  \end{abstract}
  \thispagestyle{empty}
\end{titlepage}
\else
 \maketitle
 \begin{abstract}
 
 \end{abstract}
\fi


\setcounter{page}{1} 
\pagenumbering{arabic} 
\section{Introduction}

The study of concentration of sums of independent random variables dates back to Central Limit Theorems, and hence to de Moivre and Laplace, while modern concentration bounds for sums of random variables were probably first established by Bernstein \cite{b24} in 1924. An extremely popular variant now known as Chernoff bounds was introduced by Rubin and published by Chernoff \cite{c52} in 1952.

Hyperbolic polynomials are real, multivariate homogeneous polynomials $p(x)\in \R[x_1,\dots,x_n]$, and we say that $p(x)$ is hyperbolic in direction $e\in \R^n$ if the univariate polynomial $p(te-x) = 0$ for any $x$ has only real roots as a function of $t$ (counting multiplicities). The study of hyperbolic polynomials was first proposed by G{\aa}rding in \cite{g51} and has been extensively studied in the mathematics community \cite{g59,g97,bgls01,r06}. 
Some examples of hyperbolic polynomials are as follows:
\begin{itemize}
    \item Let $h(x)=x_1x_2\cdots x_n$. It is easy to see that $h(x)$ is hyperbolic with respect to any vector $e\in \R_+^n$.
    \item Let $X = (x_{i,j})_{i,j=1}^n$ be a symmetric matrix where $x_{i,j} = x_{j,i}$ for all $1\leq i,j\leq n$. The determinant polynomial $h(x) = \det(X)$ is hyperbolic with respect to $\wt{I}$, the identity matrix $I$ packed into a vector. Indeed, $h(t \wt{I}-x)=\det(tI-X)$, the characteristic polynomial of the symmetric matrix $X$, has only real roots by the spectral theorem.
    \item Let $h(x) = x_1^2 - x_2^2-\cdots - x_n^2$. Then, $h(x)$ is hyperbolic with respect to $e=\begin{bmatrix}1&0&\cdots & 0\end{bmatrix}^\top$.
\end{itemize}
\begin{figure}[h]
    \centering
    \includegraphics[width=0.70 \textwidth]{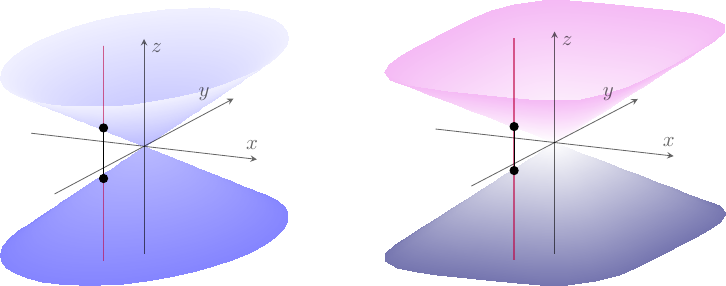}%
    \caption{\small The function on the left is $h(x,y,z)=z^2-x^2-y^2$, which is hyperbolic with respect to $e=\begin{bmatrix}0&0& 1\end{bmatrix}^\top$, since any line in this direction always has two intersections, corresponding to the two real roots of $h(-x,-y,t-z)=0$. The function on the right is $g(x,y,z) =z^4-x^4-y^4$, which is \emph{not} hyperbolic with respect to $e$, since it only has 2 intersections but the degree is 4.}
    \label{fig:hyper_poly}
\end{figure}
Inspired by the eigenvalues of matrix, we can define the hyperbolic eigenvalues of a vector $x$ as the real roots of $t\mapsto h(te-x)$, that is, $\lambda_{h,e}(x)=(\lambda_1(x),\dots,\lambda_d(x))$ such that $h(te-x)= h(e)\prod_{i=1}^d (t-\lambda_i(x))$ (see Fact~\ref{fac:hyper_factor}). In other words, the hyperbolic eigenvalues of $x$ are the zero points of the hyperbolic polynomial restricted to a real line through $x$. In this paper, we assume that $h$ and $e$ are fixed and we just write $\lambda(x)$ omitting the subscript. Furthermore, similar to the spectral norm of matrix, the \emph{hyperbolic spectral norm} of a vector $x$ can be defined as 
\begin{align}
    \|x\|_h = \max_{i\in [d]} |\lambda_i(x)|.
\end{align}

In this work, we study the concentration phenomenon of the roots of hyperbolic polynomials. More specifically, we consider the hyperbolic spectral norm of the sum of randomly signed vectors, i.e., $\|\sum_{i=1}^n r_i x_i\|_h$, where $r\in \{-1,1\}^n$ are uniformly random signs and $\{ x_1, x_2, \cdots, x_n \}$ are any fixed vectors in $\R^m$. This kind of summation has been studied in the following cases:
\begin{enumerate}
    \item \textbf{Scalar case:} $x_{i}\in \{-1, 1\}$ and the norm is just the absolute value, i.e., $|\sum_{i=1}^n r_i x_i|$,  the scalar version Chernoff bound \cite{c52} shows that
        $$\Pr_{r\sim \{-1, 1\}^n}\left[\left|\sum_{i=1}^n r_i x_i\right|>t\right]\leq 2\exp\left(-t^2/(2n)\right),$$
    corresponding to the case when $h(x)=x$ for $x\in \R$ and the hyperbolic direction $e=1$.
    \item \textbf{Matrix case:} $x_i$ are $d$-by-$d$ symmetric matrices and the norm is the spectral norm, i.e., $\|\sum_{i=1}^n r_i x_i\|$, the matrix Chernoff bound \cite{tro15} shows that 
        $$\Pr_{r\sim \{-1, 1\}^n}\left[\left\|\sum_{i=1}^n r_i x_i\right\|>t\right]\leq 2d\cdot \exp\left(-\frac{t^2}{2\left\|\sum_{i=1}^n x_i^2\right\|}\right),$$
    corresponding to $h(x) = \det(X)$ and $e= I$.
\end{enumerate}

We try to generalize these results to the hyperbolic spectral norm for any hyperbolic polynomial $h$, which is recognized as an interesting problem in this field by James Renegar \cite{r19}.  


\subsection{Our results}

In this paper, we can prove the following ``Chernoff-type'' concentration for hyperbolic spectral norm. We show that, when adding uniformly random signs to $n$ vectors, the hyperbolic spectral norm of their summation will concentrate with an exponential tail. 
\begin{theorem}[Nearly optimal hyperbolic Chernoff bound for Rademacher sum]\label{thm:intro_main}
Let $h$ be an $m$-variate, degree-$d$ hyperbolic polynomial with respect to a direction $e\in \R^m$.
Let $1\leq s \leq d$, $\sigma>0$. Given $x_1, x_2, \cdots, x_n \in \R^m$ such that $\rank(x_i)\leq s$ for all $i\in [n]$ and $\sum_{i=1}^n \|x_i\|_h^2 \leq \sigma^2$,
where $\rank(x)$ is the number of nonzero hyperbolic eigenvalues of $x$.
Then, we have
\begin{align*}
    \E_{ r \sim \{ \pm 1 \}^n }\left[\left\|\sum_{i=1}^n r_i x_i\right\|_h\right] \leq 2\sqrt{\log(s)}\cdot \sigma.
\end{align*}
Furthermore, for every $t>0$, and for some fixed constant $c>0$,
\begin{align*}
    \Pr_{ r \sim \{\pm 1\}^n } \left[ \left\| \sum_{i=1}^n r_i x_i \right\|_h > t \right]\leq 2\exp\left(-  \frac{c t^2}{\sigma^2 \log (s+1)}\right).
\end{align*}

\end{theorem}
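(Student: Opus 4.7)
My plan is to follow the template of Tropp's matrix Rademacher bound, replacing the matrix-theoretic machinery by its hyperbolic counterpart. The substantive step is the expectation bound $\E \|Y\|_h \leq 2\sqrt{\log s}\,\sigma$ where $Y = \sum_i r_i x_i$; the tail bound then drops out of a routine bounded-differences argument, so I would handle the expectation first.

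For the expectation bound, I would introduce the hyperbolic power sums $p_{2k}(y) := \sum_{j=1}^d \lambda_j(y)^{2k}$. Because power sums are symmetric functions of the hyperbolic eigenvalues, they are polynomials in the coefficients of $t \mapsto h(te-y)/h(e)$ via Newton's identities, so $p_{2k}(y)$ is an honest polynomial in $y$ despite there being no natural multiplication on $\R^m$. Since $y$ has at most $\rank(y)$ nonzero eigenvalues, one obtains the moment-to-norm sandwich $\|y\|_h^{2k} \leq p_{2k}(y) \leq \rank(y)\cdot\|y\|_h^{2k}$. Jensen's inequality then gives $\E\|Y\|_h \leq (\E p_{2k}(Y))^{1/(2k)}$, and it suffices to prove a hyperbolic noncommutative Khintchine bound of the form
\[
\E_r\, p_{2k}\Bigl(\textstyle\sum_i r_i x_i\Bigr) \;\leq\; s\cdot (Ck)^{k}\, \sigma^{2k},
\]
in which the prefactor $s$ encodes the rank hypothesis on each $x_i$. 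Optimizing $k \asymp \log s$ then yields the claimed $\E\|Y\|_h = O(\sqrt{\log s})\,\sigma$. To establish the Khintchine bound I would expand $p_{2k}$ via Newton's identities into elementary symmetric functions of $Y$, use the Rademacher symmetrization to kill odd terms, and bound the even terms using log-concavity / Lieb-type inequalities available in the hyperbolic setting (Br\"and\'en--Huh, Gurvits' mixed-capacity machinery, or a Renegar-derivative relaxation). The rank-$s$ hypothesis should enter when estimating $p_{2k}(x_i)$ for a single summand, since at most $s$ eigenvalues of $x_i$ contribute, reducing the relevant ``dimensional'' count from $d$ to $s$ at every combinatorial step.

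For the tail, the function $f(r) := \|\sum_i r_i x_i\|_h$ satisfies the bounded-differences property $|f(r)-f(r')| \leq 2\|x_i\|_h$ whenever $r,r'$ differ only in coordinate $i$; this uses the hyperbolic triangle inequality $\|a+b\|_h \leq \|a\|_h + \|b\|_h$, which follows from the Weyl-type monotonicity of hyperbolic eigenvalues. McDiarmid's inequality then gives $\Pr[f \geq \E f + u] \leq \exp(-u^2/(8\sigma^2))$; combining this with the expectation estimate and absorbing the shift $2\sqrt{\log s}\,\sigma$ into $t$ yields the stated $\exp(-ct^2/(\sigma^2\log(s+1)))$ tail, after adjusting constants.

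The main obstacle will be the hyperbolic Khintchine inequality. Unlike the matrix trace, $p_{2k}$ has no cyclic invariance, so the Wick-pairing combinatorics used in Tropp's matrix proof do not transfer verbatim; one has to work at the level of elementary symmetric polynomials and invoke the deeper real-rootedness/log-concavity structure of hyperbolic polynomials. Extracting exactly the factor $s$ (rather than $d$) in the dimensional term is the most delicate part and is where I expect most of the work to concentrate.
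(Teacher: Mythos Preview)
Your high-level architecture matches the paper's: control $\E\|Y\|_h$ via the power sums $p_{2k}(Y)=\|Y\|_{h,2k}^{2k}$, prove a moment bound $\E\,p_{2k}(Y)\leq s\,(Ck)^k\sigma^{2k}$, optimize $k\approx\log s$, and then derive the tail. Where your proposal diverges is precisely at the step you correctly flag as the main obstacle, and there the plan has a genuine gap.

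Your proposed attack on the hyperbolic Khintchine inequality---Newton's identities plus log-concavity/capacity tools (Br\"and\'en--Huh, Gurvits, Renegar relaxations)---is not a route that is known to work, and I do not see how to push it through. Newton's identities express $p_{2k}(Y)$ in the elementary symmetric functions $e_j(\lambda(Y))$, but after substituting $Y=\sum_i r_i x_i$ these become complicated multilinear forms in the $x_i$ with coefficients depending on $h$; the tools you list control products and ratios of mixed derivatives of $h$, not Rademacher averages of high-degree power sums, and there is no evident inequality extracting $\sigma^{2k}$ with the correct dimensional prefactor. The paper sidesteps this entirely via the Helton--Vinnikov theorem (Gurvits' corollary): for any \emph{two} vectors $u,v\in\R^m$ there exist real symmetric $d\times d$ matrices $A,B$ with $\lambda_h(au+bv)=\lambda(aA+bB)$ for all scalars $a,b$. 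Conditioning on $r_2,\dots,r_n$, the pair $(x_1,\sum_{i\geq 2}r_ix_i)$ spans a two-dimensional slice, so one obtains matrices $A_1,B_1$ with $p_{2k}(r_1x_1+\cdots)=\tr[(r_1A_1+B_1)^{2k}]$; now the matrix trace expansion, Rademacher parity, and Horn's singular-value inequality peel off $r_1$ and produce a recursion in the remaining expectation. Iterating $n$ times and summing with a multinomial bound gives $(2k-1)^k\,s\,\sigma^{2k}$, the factor $s$ (not $d$) arising because at the terminal step $\rank(A_n)=\rank(x_n)\leq s$ caps the number of nonzero singular values. The missing idea in your proposal is exactly this two-vector determinantal representation, which restores the cyclic-trace structure you observed was absent.

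For the tail, your McDiarmid argument is correct and does yield the stated bound after the case split on $t$ versus $\sqrt{\log s}\,\sigma$. The paper instead invokes Ledoux--Talagrand concentration for Rademacher sums in a Banach space, $\Pr[\|Y\|_h>t]\leq 2\exp(-t^2/(32\E\|Y\|_h^2))$, and bounds the second moment by the first via Khinchin--Kahane; this is cleaner but essentially equivalent to your approach.
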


We discuss the optimality of Theorem~\ref{thm:intro_main} in different cases:
\begin{itemize}
    \item \textbf{Degree-1 case: } When the hyperbolic polynomial's degree $d=s=1$, the hyperbolic polynomial is $h(z)=z$. Then, we have $\|x\|_h = |x|$ and we get the the Hoeffding's inequality \cite{hoe94}:
    $$\Pr_{r\sim \{\pm 1\}^n}\Big[ \Big|\sum_{i=1}^n r_i x_i\Big|>t \Big]\leq \exp\Big(- \Omega\Big(t^2/(\sum_{i=1}^n x_i^2)\Big)\Big).$$ It implies that our result is optimal in this case. 
    \item \textbf{A special degree-2 case:} $h(z)=z_1^2-z_2^2-\cdots -z_m^2$. Let $v_1,\dots,v_n$ be any $(d-1)$-dimensional vectors. Then, we define $x_i:= \begin{bmatrix}0 & v_i\end{bmatrix}\in \R^d$ for $i\in [n]$. We know that $\|x_i\|_h=\|v_i\|_2$, and Theorem~\ref{thm:intro_main} gives the following result:
    $$\Pr_{r\sim \{\pm 1\}^n}\Big[\Big\|\sum_{i=1}^n r_i v_i\Big\|_2>t\Big]\leq \exp(-\Omega(t^2/\sigma^2)),$$ where $\sigma^2:=\sum_{i=1}^n \|v_i\|^2$, which recovers the dimension-free vector-valued Bernstein inequality~\cite{m17}. 
    \item \textbf{Constant degree case: } When $d>1$ is a constant, consider $h$ being the determinant polynomial of $d$-by-$d$ matrix. Since $s\leq d=O(1)$, we can show that $\sigma=( \sum_{i=1}^n \|x_i\|^2 )^{1/2} = \Theta(\|\sum_{i=1}^n x_i^2\|^{1/2})$, and Theorem~\ref{thm:intro_main} exactly recovers the matrix Chernoff bound \cite{tro15}, which implies that our result is also optimal in this case. 
    \item \textbf{Constant rank case: }
    When all the vectors have constant hyperbolic rank, we still take $h=\det(X)$, but $X_1,\dots,X_n$ are constant rank matrices with arbitrary dimension. In this case, we can obtain a dimension-free matrix concentration inequality:
    $$\Pr_{r\sim \{\pm 1\}^n}\left[\left\|\sum_{i=1}^n r_i X_i\right\|>t\right]\leq 2\exp\left(-\Omega(t^2/\sigma^2)\right).$$
    It will beat the general matrix Chernoff bound \cite{tro15} when $\sigma$ is not essentially larger than $\|\sum_{i=1}^n X_i^2\|^{1/2}$. Thus, Theorem~\ref{thm:intro_main} is nearly optimal in this case. However, Theorem~\ref{thm:intro_main} is also sub-optimal in this case if we consider the high degree polynomial $h(z)=\prod_{i=1}^n z_i$, and $x_i=e_i\in \R^n$. Then, we have $\|x_i\|_h=1$, and $\|\sum_{i=1}^n r_i x_i\|_h=1$ for any $r\in \{\pm 1\}^n$. Therefore, the probability density function of the hyperbolic spectral norm of the Rademacher sum is a delta function\footnote{The delta function is defined as $\delta(x)=\begin{cases}1&\text{if}~x=1,\\0&\text{otherwise.}\end{cases}$} in this case. But our concentration result cannot characterize such a sharp transition.

\end{itemize}

Theorem~\ref{thm:intro_main} works for arbitrary vectors in $\R^m$. We also consider the maximum and minimum hyperbolic eigenvalues of the sum of random vectors in the hyperbolic cone, which is a generalization of the positive semi-definite (PSD) cone for matrices. Recall that for independent random PSD matrices ${\bf X}_1,\dots,{\bf X}_n$ with spectral norm at most $R$, let $\mu_{\max}:=\lambda_{\max}(\sum_i \E[{\bf X}_i])$. Then, matrix Chernoff bound for PSD matrices \cite{tro15} shows that 
$
    \Pr[\lambda_{\max} (\sum_{i} {\bf X}_i)\geq (1+\delta)\mu_{\max}] \leq~ de^ {-\Omega (\delta \mu_{\max})}
$
for any $\delta \geq 0$. The following theorem gives a hyperbolic version of this result:
\begin{theorem}[Hyperbolic Chernoff bound for random vectors in hyperbolic cone]\label{thm:hyper_chernoff_positive_intro}
Let $h$ be an $m$-variate, degree-$d$ hyperbolic polynomial with hyperbolic direction $e\in \R^m$. Let $\Lambda_+$ denote the hyperbolic cone\footnote{The hyperbolic cone is a set containing all vectors with non-negative hyperbolic eigenvalues. See Definition~\ref{def:hyper_cone} for the formal definition.} of $h$ with respect to $e$. Suppose $\mathsf{x}_1,\dots,\mathsf{x}_n$ are $n$ independent random vectors with supports in $\Lambda_+$ such that $\lambda_{\max}(\mathsf{x}_i)\leq R$ for all $i\in [n]$.
Define the mean of minimum and maximum eigenvalues as $\mu_{\min}:=\sum_{i=1}^n \E[\lambda_{\min}(\mathsf{x}_i)]$ and $\mu_{\max}:=\sum_{i=1}^n \E[\lambda_{\max}(\mathsf{x}_i)]$.

Then, we have
\begin{align*}
    &\Pr\left[\lambda_{\max}\left(\sum_{i=1}^n \mathsf{x}_i\right)\geq (1+\delta)\mu_{\max}\right] \leq~ d\cdot \left(\frac{(1+\delta)^{1+\delta}}{e^{\delta}}\right)^{-\mu_{\max}/R}~~\forall \delta\geq 0,\\
    &\Pr\left[\lambda_{\min}\left(\sum_{i=1}^n \mathsf{x}_i\right)\leq (1-\delta)\mu_{\min}\right] \leq~ d\cdot \left(\frac{(1-\delta)^{1-\delta}}{e^{-\delta}}\right)^{-\mu_{\min}/R}~~\forall \delta\in [0,1].
\end{align*}
\end{theorem}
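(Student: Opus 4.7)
The plan is to carry out the Laplace-transform (Ahlswede--Winter / Tropp) strategy for matrix Chernoff bounds inside the hyperbolic setting. We focus on the upper tail for $\lambda_{\max}$; the lower tail for $\lambda_{\min}$ is symmetric. Set $S := \sum_{i=1}^n \mathsf{x}_i$ and define the \emph{hyperbolic trace exponential} by $\tr_h\exp(y) := \sum_{j=1}^d \exp(\lambda_j(y))$. Since $\exp(\theta\lambda_{\max}(y))\le \tr_h\exp(\theta y)\le d\cdot\exp(\theta\lambda_{\max}(y))$ for $\theta\ge 0$, Markov's inequality yields
\[
\Pr\!\left[\lambda_{\max}(S)\ge (1+\delta)\mu_{\max}\right]\;\le\; e^{-\theta(1+\delta)\mu_{\max}}\cdot \E\!\left[\tr_h\exp(\theta S)\right].
\]

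The central step is a \emph{subadditivity inequality for hyperbolic cumulant generating functions}: for any deterministic hyperbolic $A$ and independent $\mathsf{x}_1,\dots,\mathsf{x}_n$,
\[
\E\!\left[\tr_h\exp\!\Big(A+\textstyle\sum_{i=1}^n \theta \mathsf{x}_i\Big)\right] \;\le\; \tr_h\exp\!\Big(A+\textstyle\sum_{i=1}^n \log\E[\exp(\theta \mathsf{x}_i)]\Big),
\]
where spectral functions such as $\exp$ and $\log$ act on a hyperbolic vector through its eigenvalue decomposition. In the matrix case this follows from Lieb's concavity theorem; in the hyperbolic case I would either invoke an existing hyperbolic Lieb-type inequality (in the spirit of Br\"and\'en's work on real-stable / hyperbolic polynomials) or, as a fallback, establish the simpler Ahlswede--Winter two-term bound $\E[\tr_h\exp(A+\theta \mathsf{x})]\le \tr_h\exp(A+\log\E[\exp(\theta \mathsf{x})])$ and iterate over the $n$ independent summands. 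Identifying or constructing this subadditivity is the main obstacle, since the standard matrix proofs rely on operator-theoretic tools (Lieb / Golden--Thompson) that do not translate verbatim to hyperbolic polynomials.

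To bound a single summand's cumulant, I would use that $\mathsf{x}_i\in\Lambda_+$ with $\lambda_{\max}(\mathsf{x}_i)\le R$ forces all hyperbolic eigenvalues of $\mathsf{x}_i$ into $[0,R]$. Applying the scalar inequality $e^{\theta u}\le 1+\tfrac{e^{\theta R}-1}{R}u$ (valid on $[0,R]$ by convexity) spectrally, and using monotonicity of spectral functions with respect to the partial order $\preceq$ induced by $\Lambda_+$, gives $\exp(\theta \mathsf{x}_i)\preceq \id+\tfrac{e^{\theta R}-1}{R}\mathsf{x}_i$. Taking expectations and then the spectral $\log$ together with $\log(1+u)\le u$ yields
\[
\log\E[\exp(\theta \mathsf{x}_i)] \;\preceq\; \tfrac{e^{\theta R}-1}{R}\,\E[\mathsf{x}_i].
\]

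Combining the pieces and invoking (i) $\tr_h\exp(y)\le d\exp(\lambda_{\max}(y))$, (ii) subadditivity of $\lambda_{\max}$ on hyperbolic vectors, and (iii) Jensen's inequality $\lambda_{\max}(\E[\mathsf{x}_i])\le \E[\lambda_{\max}(\mathsf{x}_i)]$ (valid because $\lambda_{\max}$ is convex on $\Lambda_+$), I obtain
\[
\Pr\!\left[\lambda_{\max}(S)\ge (1+\delta)\mu_{\max}\right] \;\le\; d\cdot\exp\!\Big(\tfrac{e^{\theta R}-1}{R}\mu_{\max}-\theta(1+\delta)\mu_{\max}\Big).
\]
Optimizing at the standard Chernoff choice $\theta=R^{-1}\log(1+\delta)$ yields the claimed upper-tail bound. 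The lower-tail bound on $\lambda_{\min}$ is proved analogously using the scalar inequality that upper-bounds $e^{-\theta u}$ by a linear function on $[0,R]$, followed by the optimization $\theta = -R^{-1}\log(1-\delta)$, $\delta\in[0,1]$.
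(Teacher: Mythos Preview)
Your proposal has a genuine structural gap at exactly the point you flag as ``the main obstacle,'' and the paper's proof shows that this obstacle is not merely technical but essentially unresolvable along the lines you sketch. The expressions $\exp(\theta \mathsf{x}_i)$ and $\log \E[\exp(\theta \mathsf{x}_i)]$, interpreted as \emph{vectors in} $\R^m$ with prescribed hyperbolic eigenvalues, are not well-defined: unlike symmetric matrices, a hyperbolic vector $x\in\R^m$ has eigenvalues $\lambda_1(x),\dots,\lambda_d(x)$ but no ``eigenvector'' structure, so there is no spectral functional calculus that produces another vector in $\R^m$ from applying a scalar function to the eigenvalues. The paper explicitly notes this limitation in its discussion of open problems (there is no map $x\mapsto\phi(x)$ whose hyperbolic eigenvalues are $f(\lambda_i(x))$), and for the same reason no hyperbolic analogue of Lieb's theorem or Golden--Thompson is available. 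Thus neither your primary route nor your Ahlswede--Winter fallback can be carried out.

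The paper's workaround is quite different. It Taylor-expands $\E[\tr_h\exp(\theta S)]$ into $\sum_{q\ge 0}\frac{\theta^q}{q!}\E\big[\sum_j\lambda_j(S)^q\big]$ and then controls each power moment separately. For a fixed $q$ it peels off one summand at a time: writing $S=\mathsf{x}_1+(\text{rest})$, it invokes the Helton--Vinnikov theorem (via Gurvits's two-vector corollary) to find symmetric PSD matrices $A,B\in\R^{d\times d}$ with $\lambda(\mathsf{x}_1+\text{rest})=\lambda(A+B)$, so that $\sum_j\lambda_j(S)^q=\tr[(A+B)^q]$. Expanding $(A+B)^q$ and applying Horn's inequality gives $\tr[(A+B)^q]\le\sum_{k}\binom{q}{k}\lambda_{\max}(\mathsf{x}_1)^k\sum_j\lambda_j(\text{rest})^{q-k}$. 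Iterating over the $n$ vectors and resumming in $q$ yields $\E[\tr_h\exp(\theta S)]\le d\cdot\prod_i\E[e^{\theta\lambda_{\max}(\mathsf{x}_i)}]$, after which the scalar bound $e^{\theta u}\le 1+(e^\theta-1)u$ on $[0,1]$ (after rescaling by $R$) and the standard Chernoff optimization finish the upper tail. The lower tail is obtained by the substitution $\mathsf{x}_i'=e-\mathsf{x}_i$, not by a symmetric argument. The key point is that Helton--Vinnikov only gives a matrix representation for \emph{two} hyperbolic vectors at a time, so the paper must work moment-by-moment and vector-by-vector rather than through a single global Lieb-type inequality.
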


\subsection{Hyperbolic anti-concentration}
Anti-concentration is an interesting phenomenon in probability theory, which studies the opposite perspective of concentration inequalities. A simple example is the standard Gaussian random variable, which has probability at most $O(\Delta)$ for being in any interval of length $\Delta$. For Rademacher random variables $x\sim \{\pm 1\}^d$, the celebrated Littlewood-Offord theorem \cite{lo43} states that for any degree-$1$ polynomial $p(x) = \sum_{i=1}^d a_i x_i$ with $|a_i|\geq 1$, the probability of $p(x)$ in any length-1 interval is at most $O(\frac{\log d}{\sqrt{d}})$. Later, the theorem was improved to $O(\frac{1}{\sqrt{d}})$ by Erd\"{o}s \cite{e45}, and generalized to higher degree polynomials by \cite{ctv06,rv13,mnv17}. From a geometric prospective, the Littlewood-Offord theorem says that the maximum fraction of hypercube points that lay in the boundary of a halfspace $\mathbf{1}_{\langle a, x\rangle\leq \theta}$ with $|a_i|\geq 1$ for $i\in [d]$ is at most $O(\frac{1}{\sqrt{d}})$. \cite{ost19} extended this result from half-space to polytope and \cite{ay21} further extended to positive spectrahedron. 

Following this line of research, we prove the following hyperbolic anti-concentration theorem, which shows that the hyperbolic spectral norm of Rademacher sum of vectors in the hyperbolic cone cannot concentrate within a small interval.

\begin{theorem}[Hyperbolic anti-concentration theorem, informal]\label{thm:anti-concen_intro}
Let $h$ be an $m$-variate degree-$d$ hyperbolic polynomial with hyperbolic direction $e\in \R^m$. Let $\{x_i\}_{i\in [n]}\subset \Lambda_+$ be a sequence of vectors in the hyperbolic cone such that $\lambda_{\max}(x_i)\leq \tau$ for all $i\in [n]$ and  $\sum_{i=1}^n \lambda_{\min}(x_i)^2\geq 1$. 

Then, for any $y\in \R^m$ and any $\Delta \geq 20\tau \log d$, we have
\begin{align*}
    \Pr_{\epsilon\sim \{-1,1\}^n}\left[\lambda_{\max}\left(\sum_{i=1}^n \epsilon_i x_i-y\right) \in [-\Delta, \Delta] \right]\leq O(\Delta).
\end{align*}
\end{theorem}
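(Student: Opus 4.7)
The plan is to reduce the hyperbolic anti-concentration claim to a one-dimensional density bound via smoothing, and then invoke the hyperbolic Chernoff bound already established.

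First, I would approximate $\lambda_{\max}$ by the smooth log-soft-max surrogate
\begin{align*}
F_\beta(x) \;:=\; \frac{1}{\beta}\log\Bigl(\sum_{j=1}^{d} e^{\beta\lambda_j(x)}\Bigr),
\end{align*}
which satisfies $\lambda_{\max}(x)\le F_\beta(x)\le \lambda_{\max}(x)+\log(d)/\beta$. Choosing $\beta=\Theta(1/\tau)$ makes the additive smoothing error $\Theta(\tau\log d)$, a constant fraction of $\Delta\ge 20\tau\log d$; hence every realisation of $\epsilon$ with $\lambda_{\max}(S-y)\in[-\Delta,\Delta]$ also satisfies $F_\beta(S-y)\in[-\Delta,2\Delta]$, where $S=\sum_{i=1}^n\epsilon_ix_i$. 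It therefore suffices to bound $\Pr[F_\beta(S-y)\in[-\Delta,2\Delta]]$ by $O(\Delta)$.

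Second, I would use a Lieb-type trace-exponential theorem for hyperbolic polynomials (the same machinery underlying the proof of Theorem~\ref{thm:intro_main}) to show that $F_\beta$ is convex and $1$-Lipschitz in the hyperbolic spectral norm. Combined with Theorem~\ref{thm:intro_main}, this gives a sub-Gaussian tail for $F_\beta(S-y)$ about its mean with variance proxy $O(\tau^2\log d)$. The sub-Gaussian tail alone is not a density bound, but coupled with a lower bound $\Var_\epsilon[F_\beta(S-y)]\ge c$ (an absolute constant), a standard concentration-to-anti-concentration step yields a density of order $O(1)$ in a window around the mean, whence the probability of falling into any window of width $\Theta(\Delta)$ is $O(\Delta)$.

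The main obstacle is the variance lower bound. In the matrix case one exploits the top eigenvector to argue that $\lambda_{\max}(S+2x_i)-\lambda_{\max}(S)$ is controlled below by a directional quantity involving $\lambda_{\min}(x_i)$; the hyperbolic setting lacks eigenvectors, so I would instead use hyperbolic Weyl monotonicity, $\lambda_j(x+z)\ge\lambda_j(x)+\lambda_{\min}(z)$ for $z\in\Lambda_+$ (a consequence of interlacing for hyperbolic polynomials), to lower-bound the discrete partial derivative of $F_\beta$ under the flip $\epsilon_i\to-\epsilon_i$ by $\lambda_{\min}(x_i)$, and then assemble these into $\Var_\epsilon[F_\beta(S-y)]\ge\Omega(\sum_i\lambda_{\min}(x_i)^2)\ge\Omega(1)$ via the Efron--Stein inequality. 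An alternative, potentially cleaner route is to mimic the Anari--Yao argument for positive spectrahedra directly, viewing the event $\{\lambda_{\max}(S-y)\le t\}$ as containment of $S$ in a translated hyperbolic half-space and exploiting real-stability of the associated polynomial to bound the growth of $\Pr[S-y-te\in\Lambda_+]$ in $t$.
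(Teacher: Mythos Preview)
Your proposal has a genuine gap at the ``concentration-to-anti-concentration'' step. A sub-Gaussian tail together with a variance lower bound does \emph{not} yield a density bound or a small-ball estimate of the form $\Pr[X\in[a,a+\Delta]]\le O(\Delta)$: take $X=\pm 1$ uniformly, which is sub-Gaussian with variance~$1$ yet has $\Pr[X\in[1-\delta,1+\delta]]=1/2$ for every $\delta>0$. What Littlewood--Offord-type statements actually need is a \emph{uniform} lower bound on the individual increments (the analogue of $|a_i|\ge 1$), not merely $\sum_i\lambda_{\min}(x_i)^2\ge 1$; under the stated hypotheses the $\lambda_{\min}(x_i)$ can be arbitrarily small. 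Relatedly, the Efron--Stein inequality is an \emph{upper} bound on the variance, so it cannot give the lower bound you want; the level-$1$ Fourier/Parseval argument would give $\Var\ge\sum_i\hat f(\{i\})^2$, but even a correct variance lower bound does not rescue the density step. Finally, the ``Lieb-type trace-exponential theorem for hyperbolic polynomials'' you invoke is not part of the paper's toolkit and, as the discussion section notes, is not known to exist.

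The paper's proof avoids all of this by working combinatorially on the hypercube rather than analytically on $F_\beta$. It first \emph{buckets} the vectors via a random hash $\pi:[n]\to[2p]$ with $p\asymp 1/(\tau^2\log d)$ and uses the hyperbolic Chernoff bound for cone vectors (Theorem~\ref{thm:hyper_chernoff_positive_intro}, not Theorem~\ref{thm:intro_main}) to show that most buckets $c$ satisfy $\lambda_{\min}(\sum_{i\in\pi^{-1}(c)}x_i)\ge\Omega(1/(\tau p))$. This manufactures exactly the uniform per-coordinate lower bound missing from your outline. Then, using the Weyl-type monotonicity $\lambda_{\max}(x+z)\ge\lambda_{\max}(x)+\lambda_{\min}(z)$, the boundary set $\{\gamma:\lambda_{\max}(g(\gamma)-y)\in(-2\rho,0]\}$ is shown to be $\alpha$-semi-thin in the sense of \cite{ost19}, whence their hypercube isoperimetric theorem gives $\Pr[\text{boundary}]\le O(1/\sqrt{p})$ for a window of width $2\rho=\Theta(1/(\tau p))$; a union bound over $O(\tau p\,\Delta)$ sub-windows then yields the claimed $O(\Delta)$. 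Your ``alternative route'' in the last sentence is in fact the correct one, but it requires the bucketing step and Theorem~\ref{thm:hyper_chernoff_positive_intro} to make the semi-thin condition hold.
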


From the geometric viewpoint, we can define a ``positive hyperbolic-spectrahedron'' as the space $\{\alpha \in \R^n: \lambda_{\max}(\alpha_1x_1+\cdots +\alpha_nx_n-y)\leq 0\}$, where $x_1,\dots,x_n$ are in the hyperbolic cone. Then, Theorem~\ref{thm:anti-concen_intro} states that the hyperbolic spectral norm of a positive hyperbolic-spectrahedron cannot be concentrated in a small region.

\subsection{Hyperbolic discrepancy theory}\label{sec:app_discrepancy}
Hyperbolic polynomial is an important tool in the discrepancy theory, which is an important subfield of combinatorics, with many applications in theoretical computer science. Following Meka's blog post \cite{m14}, by combining scalar version Chernoff bound and union bound, we can easily prove that, for any $n$ vectors $x_1,\dots,x_n\in \{-1,1\}^n$, there exists $r\in \{-1,1\}^n$ such that $|\langle r, x_i\rangle|\leq O(\sqrt{n\log n})$ for every $i\in [n]$. In a celebrated result ``Six Standard Deviations Suffice'', Spencer showed that it can be improved to $|\langle r, x_i\rangle|\leq 6\sqrt{n}$ \cite{spe85}. 

For the matrix case, by the matrix Chernoff bound, it follows that for any symmetric matrix $X_1,\dots,X_n\in \R^{d\times d}$ with $\|X_i\|\leq 1$, for uniformly random signs $r\in \{-1,1\}^n$, with high probability, $\left\|\sum_{i=1}^n r_i X_i\right\|\leq O(\sqrt{\log (d)n})$.

An important open question is, can we shave the $\log (d)$ factor for \emph{some} choice of the signs?
\begin{conjecture}[Matrix Spencer Conjecture]\label{conj:matrix_spencer}
For any symmetric matrices $X_1,\dots,X_n\in \R^{d\times d}$ with $\|X_i\|\leq 1$, there exist signs $r\in \{-1, 1\}^n$ such that $\|\sum_{i=1}^n r_i X_i\|=O(\sqrt{n})$.
\end{conjecture}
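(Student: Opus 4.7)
The plan is to attack the conjecture via the entropy-based partial coloring framework of Giannopoulos and Bansal--Dadush--Garg--Lovett, substituting the hyperbolic concentration and anti-concentration machinery from earlier in the paper for the usual scalar ingredients. First I would reduce to the following partial coloring statement: for any symmetric $X_1, \dots, X_n \in \R^{d \times d}$ with $\|X_i\| \leq 1$, there exists $r \in [-1, 1]^n$ with at least $n/2$ entries in $\{-1, +1\}$ such that $\|\sum_{i=1}^n r_i X_i\| \leq c\sqrt{n}$. Iterating this partial coloring on the remaining fractional coordinates $O(\log n)$ times yields a full $\{-1, +1\}^n$ coloring with discrepancy $c\sqrt{n} \cdot \sum_{k \geq 0} 2^{-k/2} = O(\sqrt{n})$.

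The central step is to verify the Gaussian measure hypothesis of the partial coloring lemma for the convex body $K = \{r \in \R^n : \|\sum_{i=1}^n r_i X_i\| \leq c\sqrt{n}\}$, namely $\gamma_n(K) \geq e^{-n/100}$, where $\gamma_n$ is the standard Gaussian measure on $\R^n$. Letting $g \sim \N(0, I_n)$, this reduces to bounding $\Pr[\|\sum_{i=1}^n g_i X_i\| > c\sqrt{n}]$ strictly away from $1$. A direct matrix Chernoff gives a tail of $d \cdot \exp(-\Omega(c^2 n))$, which is good enough only when $\log d = O(n)$; translated back to the conjecture this still leaks a $\sqrt{\log d}$ factor and recovers only the known $O(\sqrt{n \log d})$ bound. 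To sharpen it, I would apply Theorem~\ref{thm:intro_main} with $h = \det$ and exploit its $\log(s+1)$ dependence on the hyperbolic rank $s$ rather than on the ambient degree $d$.

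The hope then is that an iterative rank-reduction scheme keeps the effective hyperbolic rank controlled at every scale: at stage $k$ of the recursion one restricts attention to the top eigenspace of the current partial sum, and argues that only the part of each $X_i$ supported on a low-dimensional subspace can push the spectral norm above the threshold. Combined with the hyperbolic anti-concentration estimate (Theorem~\ref{thm:anti-concen_intro}), one can try to rule out the scenario where $\sum_i g_i X_i$ concentrates too tightly near the boundary $\{\lambda_{\max} = c\sqrt{n}\}$: too much mass in such a narrow spectral slab would contradict the $O(\Delta)$ upper bound on interval probabilities.

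The main obstacle---and the reason Matrix Spencer remains open---is precisely this rank-reduction step. Without additional structure on the $X_i$, there is no a priori reason the effective hyperbolic rank at each scale should be $o(d)$, and so Theorem~\ref{thm:intro_main} still incurs a $\sqrt{\log d}$ loss. Consequently, the cleanest thing this approach achieves is to reduce Matrix Spencer to a strictly stronger Gaussian hyperbolic Chernoff bound that removes the $\log(s+1)$ factor under suitable sub-isotropy assumptions on the $X_i$, plausibly via a second-moment interlacing argument in the spirit of Marcus--Spielman--Srivastava. A complete proof along these lines would almost certainly require a substantive new ingredient beyond what is developed here, which is consistent with the paper itself only resolving the constant-rank case.
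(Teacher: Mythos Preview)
The statement you are attempting to prove is Conjecture~\ref{conj:matrix_spencer}, the Matrix Spencer Conjecture. The paper does \emph{not} prove it; it is stated as an open problem, and only the constant-rank special case is resolved (via Corollary~\ref{Cor:hyper_discrep_high_prob}). So there is no ``paper's own proof'' to compare against, and your task was ill-posed from the outset.

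That said, your proposal is also not a proof, and you correctly diagnose why. The partial-coloring framework reduces the problem to a Gaussian measure lower bound $\gamma_n(K) \geq e^{-n/100}$ for $K = \{r : \|\sum_i r_i X_i\| \leq c\sqrt{n}\}$, and to establish this you invoke Theorem~\ref{thm:intro_main} with $h = \det$. But that theorem's exponent carries a $\log(s+1)$ factor where $s$ is the hyperbolic rank, and for general symmetric matrices $s$ can be as large as $d$; you then propose an ``iterative rank-reduction scheme'' to keep $s$ small, yet provide no mechanism for it and immediately concede that ``without additional structure on the $X_i$, there is no a priori reason the effective hyperbolic rank at each scale should be $o(d)$.'' The anti-concentration appeal is also problematic: Theorem~\ref{thm:anti-concen_intro} requires the vectors to lie in the hyperbolic cone $\Lambda_+$, i.e., the $X_i$ to be PSD, which is not assumed in the conjecture. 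In short, your write-up is an honest survey of why the standard machinery falls one logarithmic factor short, not a proof attempt with a fixable gap; this is exactly consistent with the conjecture's status in the paper.
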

The breakthrough paper by Marcus, Spielman and Srivastava \cite{mss15} proved the famous Kadison-Singer conjecture \cite{ks59}, which was open for more than half of a century.
\begin{theorem}[Kadison-Singer, \cite{ks59, mss15}]\label{thm:kadison_singer}
Let $k \geq 2$ be an integer and $\epsilon$ a positive real number. Let $x_1, \dots , x_n \in \C^m$ such that $\|x_ix_i^*\| \leq \epsilon ~~\forall i\in [n]$, and $\sum_{i=1}^n x_ix_i^* = I$. 
Then, there exists a partition $S_1\cup S_2\cup \cdots \cup S_k=[n]$ such that $\|\sum_{i\in S_j} x_ix_i^* \| \leq (\frac{1}{\sqrt{k}}+\sqrt{\epsilon} )^2\quad \forall j\in [k]$.
\end{theorem}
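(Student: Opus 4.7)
The plan is to follow the interlacing families and mixed characteristic polynomial framework of Marcus-Spielman-Srivastava. First I would recast the partition existence problem as a single top-eigenvalue problem. Let $\xi_1,\dots,\xi_n$ be independent uniform random variables on $[k]$, set $S_j=\{i:\xi_i=j\}$, and for each pair $(i,j)$ define the block-diagonal matrix $A_{i,j}\in\C^{km\times km}$ that has $k\,x_ix_i^*$ in its $j$-th diagonal block and zero elsewhere. Then
\begin{equation*}
    M_\xi \;:=\; \sum_{i=1}^n A_{i,\xi_i} \;=\; \bigoplus_{j=1}^k k\sum_{i\in S_j} x_ix_i^*,
\end{equation*}
and the conclusion reduces to exhibiting one outcome of $\xi$ with $\lambda_{\max}(M_\xi)\le k(\tfrac{1}{\sqrt k}+\sqrt\epsilon)^2$.

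Second, I would organize the family $\{\chi[M_\xi](x)\}_{\xi\in[k]^n}$ along the tree of partial assignments and show that it forms an interlacing family. The essential ingredient is that every conditional expectation $\E_{\xi_t,\dots,\xi_n}[\chi[M_\xi](x)]$ is real-rooted. This follows from the real stability of the multivariate polynomial $\det(xI+\sum_{i,j}z_{i,j}A_{i,j})$ together with closure of real stability under the averaging operators induced by each $\xi_i$. Applying the MSS interlacing families theorem then produces a specific $\xi^*$ with $\lambda_{\max}(M_{\xi^*})$ bounded by the largest root of the fully averaged polynomial
\begin{equation*}
    p(x) \;=\; \E_{\xi}[\chi[M_\xi](x)] \;=\; \prod_{i=1}^n\Bigl(1-\tfrac{1}{k}\sum_{j=1}^k \partial_{z_{i,j}}\Bigr)\det\!\Bigl(xI+\sum_{i,j}z_{i,j}A_{i,j}\Bigr)\Big|_{z=0}.
\end{equation*}

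Third, I would bound the largest root of $p$ via the multivariate barrier method. The hypothesis $\sum_i x_ix_i^*=I$ implies that, at initialization, the univariate restriction of the $\det$-polynomial has all roots at $-1$, and the per-coordinate barrier functions $\Phi_{i,j}(x,z)=\partial_{z_{i,j}}\log\det(\cdot)$ are controlled by $\|x_ix_i^*\|\le\epsilon$. Processing the operators $1-\tfrac{1}{k}\sum_j\partial_{z_{i,j}}$ one $i$ at a time, each application is absorbed by a small additive shift of the $x$-coordinate chosen to maintain the ``above-the-roots'' invariant and keep all $\Phi_{i,j}$ monotonically controlled. Summing the shifts over $i=1,\dots,n$ and optimizing yields that the largest root of $p$ is at most $k(\tfrac{1}{\sqrt k}+\sqrt\epsilon)^2$, which upon dividing by $k$ gives the block-wise bound claimed for each $\|\sum_{i\in S_j}x_ix_i^*\|$.

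The main obstacle is the barrier-tracking step: establishing, for each $i$, the one-step inequality showing that if the barriers $\Phi_{i,j}$ are bounded by $\phi$ at $(x,z)$, then shifting $x\mapsto x+\delta$ with an explicit $\delta$ depending on $\phi$ preserves both real-rootedness of the univariate restriction and the upper bounds on all remaining barriers after applying $1-\tfrac{1}{k}\sum_j\partial_{z_{i,j}}$. This relies on the univariate identity $\partial_z q(x)/q(x) \le 1/(x-\lambda_{\max})$ for real stable $q$, concavity/monotonicity of the barrier in $x$ above the roots, and a careful choice of the final shift that balances the cumulative $\sum_i\delta_i$ against the initial barrier budget determined by $\epsilon$ and $1/k$.
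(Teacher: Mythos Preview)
Your proposal is a correct outline of the original Marcus--Spielman--Srivastava proof. However, the paper does not give its own proof of this statement: Theorem~\ref{thm:kadison_singer} is stated as a known result, attributed to \cite{ks59,mss15}, and used as background for the paper's hyperbolic generalizations.

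That said, the paper's own machinery does recover this theorem as a special case, via a somewhat different route. Your plan works directly with matrices: block-diagonalize over the $k$ parts, build an interlacing family of characteristic polynomials using real stability of $\det(xI+\sum z_{i,j}A_{i,j})$, and bound the top root of the mixed characteristic polynomial by the multivariate barrier method applied to the operators $1-\tfrac1k\sum_j\partial_{z_{i,j}}$. The paper instead follows Br{\"a}nd{\'e}n's hyperbolic framework \cite{b18}: it replaces $\det$ by an arbitrary hyperbolic polynomial $h$, replaces characteristic polynomials by the \emph{mixed hyperbolic polynomial} $h[v_1,\dots,v_n]=\prod_i(1-y_iD_{v_i})h(x)$, replaces interlacing families by \emph{compatible families} (Definition~\ref{def:compatible}, Theorem~\ref{thm:compatible}), and replaces the barrier calculus by a hyperbolic-cone containment argument (Theorem~\ref{cor:branden_5.5}, Theorem~\ref{thm:mix_hyper_root}) that tracks when $\rho\ov e+\underline{\mathbf 1}$ lies in the cone $\Gamma_+$ of the mixed polynomial. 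Specializing $h=\det$ and $v_i=x_ix_i^*$ in the paper's Theorem~\ref{thm:branden_subisotropic} with $\sigma=1$ yields exactly the bound $(\sqrt\epsilon+\sqrt{1/k})^2$ of Theorem~\ref{thm:kadison_singer} (see Remark~\ref{rmk:bound_inf}). Your approach is more concrete and self-contained for the matrix case; the paper's approach buys generality (arbitrary hyperbolic polynomials, arbitrary rank, and the sub-isotropic relaxation $\|\sum_i x_i\|_h\le\sigma$ in place of $\sum_i x_i=e$).
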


The Kadison-Singer theorem implies that for rank-1 matrices $X_1,\dots,X_n$ with $\|X_i\|\leq \epsilon$ in isotropic position\footnote{Isotropic means $X_1+\cdots + X_n = I$.}, there exists a choice of $r\in \{-1, 1\}^n$ such that $\|\sum_{i=1}^n r_i X_i\|\leq O(\sqrt{\epsilon})$.\footnote{For more details and consequences of the Kadison-Singer theorem, we refer the readers to \cite{ct16,ms16}.}

Theorem~\ref{thm:kadison_singer} can be generalized for higher rank matrices by Cohen \cite{c16} and Br{\"a}nd{\'e}n \cite{b18} independently. However, their results still need the isotropic condition. On the other hand, Kyng, Luh, and Song \cite{kls19} proved a stronger version of rank-1 matrix Spencer theorem  (Conjecture~\ref{conj:matrix_spencer}) by showing that when the spectral norm of the sum of the squared matrices (the variance of the random matrices) is bounded, the matrix discrepancy upper bound is at most four deviations. Formal theorem statements will be presented in Section~\ref{sec:hy_ks}.

Similar to the scalar and matrix cases, the discrepancy theory can be further generalized to the hyperbolic spectral norm. Br{\"a}nd{\'e}n \cite{b18} proved a hyperbolic Kadison-Singer theorem, which generalizes Theorem~\ref{thm:kadison_singer} to the hyperbolic spectral norm and vectors with arbitrary rank and in isotropic condition. Our first result relaxes the isotropic condition to sub-isotropic:

\begin{theorem}[Hyperbolic Kadison-Singer with sub-isotropic condition, informal]\label{thm:hyperbolic_ks_sub_intro}
Let $k \geq 2$ be an integer and $\epsilon,\sigma>0$. Suppose $h$ is hyperbolic with respect to $e \in \R^m$, and let $x_1, \dots , x_n$ be $n$ vectors in the hyperbolic cone such that
\begin{align}\label{eq:subisotropic_intro}
    \tr_h[x_i] \leq \epsilon~~\forall i\in [n], ~\text{and}\quad \Big\|\sum_{i=1}^n x_i\Big\|_h \leq \sigma.
\end{align}
where $\tr_h[x]:=\sum_{i=1}^d \lambda_i(x)$.
Then, there exists a partition $S_1 \cup S_2 \cup \cdots \cup S_k = [n]$ such that for all $j\in [k]$,
\begin{align*}
    \Bigg\| \sum_{i\in S_j} x_i\Bigg\|_h \leq  \left(\sqrt{\epsilon}+\sqrt{\sigma/k}\right)^2.
\end{align*}
\end{theorem}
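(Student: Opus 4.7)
The plan is to reduce the sub-isotropic statement to Br\"and\'en's original isotropic hyperbolic Kadison-Singer theorem \cite{b18} via a padding argument, in the same spirit as how Kyng-Luh-Song \cite{kls19} reduced non-isotropic rank-$1$ matrix discrepancy to the matrix Kadison-Singer theorem. The idea is to add auxiliary vectors in the hyperbolic cone so that the new sum becomes a multiple of $e$, apply Br\"and\'en's theorem to the enlarged isotropic collection, and then discard the auxiliary vectors while arguing that this can only decrease the hyperbolic spectral norm of each class.

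First, let $S := \sum_{i=1}^n x_i$. Since $\Lambda_+$ is a convex cone, $S \in \Lambda_+$, and the hypothesis $\|S\|_h \leq \sigma$ forces every hyperbolic eigenvalue of $S$ to lie in $[0,\sigma]$. I would then verify the eigenvalue-shift identity $\lambda_i(\sigma e - S) = \sigma - \lambda_{d-i+1}(S) \geq 0$, which follows from
\begin{align*}
    h\bigl(te - (\sigma e - S)\bigr) = h\bigl((t-\sigma)e + S\bigr) = (-1)^d h\bigl((\sigma - t)e - S\bigr)
\end{align*}
together with the definition of hyperbolic eigenvalues. Consequently $y := \sigma e - S \in \Lambda_+$. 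Next, choose $N := \lceil \tr[y]/\epsilon \rceil$ and pad the collection with $N$ copies of $y/N$, each of which lies in $\Lambda_+$ and has trace at most $\epsilon$. The enlarged collection $\{x_1,\dots,x_n,y/N,\dots,y/N\}$ now satisfies the exact isotropic condition $\sum_i x_i = \sigma e$ while preserving the trace bound.

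Second, I rescale by $1/\sigma$: the vectors $\tilde x_i := x_i/\sigma$ satisfy $\sum_i \tilde x_i = e$ and $\tr[\tilde x_i] \leq \epsilon/\sigma$ by homogeneity of $h$. Br\"and\'en's hyperbolic Kadison-Singer theorem then yields a partition $S_1 \cup \cdots \cup S_k = [n+N]$ with $\lambda_{\max}(\sum_{i \in S_j} \tilde x_i) \leq (1/\sqrt{k} + \sqrt{\epsilon/\sigma})^2$ for each $j$, and unscaling gives $\lambda_{\max}(\sum_{i \in S_j} x_i) \leq (\sqrt{\sigma/k}+\sqrt{\epsilon})^2$. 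Finally, set $T_j := S_j \cap [n]$ to be the restriction to original indices. Both $\sum_{i \in T_j} x_i$ and $\sum_{i \in S_j \setminus [n]} x_i$ lie in $\Lambda_+$, and $\lambda_{\max}$ is monotone with respect to the cone partial order (since $\lambda_{\max}(u) \leq \alpha$ iff $\alpha e - u \in \Lambda_+$, and $\Lambda_+$ is closed under addition). Hence $\lambda_{\max}(\sum_{i \in T_j} x_i) \leq \lambda_{\max}(\sum_{i \in S_j} x_i)$, and since $\sum_{i \in T_j} x_i \in \Lambda_+$ the hyperbolic spectral norm equals $\lambda_{\max}$, giving the desired bound.

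The main obstacle I anticipate is purely the verification of the two cone-theoretic facts used above: that $\sigma e - S \in \Lambda_+$ whenever $\|S\|_h \leq \sigma$, and that $\lambda_{\max}$ is monotone under the cone order on $\Lambda_+$. Both should be standard properties of hyperbolic polynomials (provable by the same characteristic-polynomial manipulations that appear in the scalar and PSD cases), so no new conceptual difficulty arises beyond packaging these facts cleanly; all of the nontrivial discrepancy content is supplied by Br\"and\'en's theorem applied after padding.
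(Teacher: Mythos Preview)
Your padding-and-rescaling reduction is correct and proves the informal statement as written. The two cone facts you flag are indeed immediate from Fact~\ref{fac:eigen_linear_trans} and Theorem~\ref{thm:hyperbolic_cone_gar}: the eigenvalue shift $\lambda_i(\sigma e - S) = \sigma - \lambda_{d-i+1}(S)$ gives $\sigma e - S \in \Lambda_+$, and monotonicity of $\lambda_{\max}$ along the cone order follows from convexity of $\Lambda_+$ exactly as you sketch. After that, Br\"and\'en's isotropic theorem applied to the rescaled family with parameter $\epsilon/\sigma$ gives the bound $(\sqrt{\epsilon}+\sqrt{\sigma/k})^2$, and discarding the dummy copies of $y/N$ can only decrease $\lambda_{\max}$.

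This is, however, \emph{not} the route the paper takes. The paper reopens Br\"and\'en's argument and proves a sub-isotropic version of his root bound (Theorem~\ref{thm:mix_hyper_root}) directly: it shows that for the mixed hyperbolic polynomial $h[v_1,\dots,v_n]$ one has $\lambda_{\max}(v_1,\dots,v_n)\leq \sigma\cdot\delta(\epsilon/\sigma,n,r)$ by pushing the vector $\big(\epsilon\mu + (1-\tfrac1n)\delta\sigma\big)\ov{e} + \underline{\mathbf 1}$ into the hyperbolic cone $\Gamma_+$ of the mixed polynomial, using convexity of $\Gamma_+$ and the hypothesis $\lambda_{\max}(\sum v_i)\leq\sigma$ in place of $\sum v_i=e$. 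Your approach treats Br\"and\'en's theorem as a black box and is considerably shorter; the paper's approach is needed to get the sharper formal statement (Theorem~\ref{thm:branden_subisotropic}) with the refined bound $\tfrac{\sigma}{k}\delta(k\epsilon/\sigma,n,rk)$ depending on the \emph{original} $n$ and rank parameter $r$, whereas padding inflates $n$ to $n+N$ and produces auxiliary vectors $y/N$ of potentially full rank $d$, so only the $n=\infty$, $r=\infty$ bound survives. For the informal theorem this distinction disappears. It is also worth noting that the paper explicitly comments on the ``naive'' padding approach; the point your argument adds beyond that naive version is the rescaling to $\sigma e$ rather than to $e$, which is precisely what recovers the $\sigma$-dependent bound.
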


Theorem~\ref{thm:hyperbolic_ks_sub_intro} implies the high rank case of \cite{mss15} result (Theorem~\ref{thm:kadison_singer}) without the isotropic condition. We note that there is a naive approach to relax the isotropic condition in \cite{mss15,b18}'s results by adding several small dummy vectors to make the whole set in isotropic position. (See \cite{ove15} for more details.) However, Theorem~\ref{thm:hyperbolic_ks_sub_intro} is slightly better than this approach, since the naive approach will increase the number of vectors which results in a worse bound. 
\ifdefined\isarxiv
(See Remark~\ref{rmk:compare_Branden} for more details.)
\fi

Theorem~\ref{thm:hyperbolic_ks_sub_intro} also implies the following hyperbolic discrepancy result:
\begin{corollary}[Hyperbolic discrepancy for sub-isotropic vectors]\label{cor:branden_discrepancy}
Let $0<\epsilon\leq \frac{1}{2}$. Suppose $h\in \R[z_1,\dots,z_m]$ is hyperbolic with respect to $e\in \R^m$, and let $x_1, \dots , x_n\in \Lambda_+(h,e)$ that satisfy Eq.~\eqref{eq:subisotropic_intro}.
Then, there exist signs $r\in \{-1,1\}^n$ such that
\begin{align*}
    \left\|\sum_{i=1}^n r_i x_i\right\|_h\leq 2\sqrt{\epsilon(2\sigma-\epsilon)}.
\end{align*}
\end{corollary}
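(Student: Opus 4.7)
The plan is to invoke Theorem~\ref{thm:hyperbolic_ks_sub_intro} with $k=2$, which produces a partition $[n] = S_1 \cup S_2$ satisfying $\|A\|_h, \|B\|_h \leq M := (\sqrt{\epsilon} + \sqrt{\sigma/2})^2$, where $A := \sum_{i \in S_1} x_i$ and $B := \sum_{i \in S_2} x_i$. Setting $r_i := +1$ for $i \in S_1$ and $r_i := -1$ for $i \in S_2$, the signed sum becomes $\sum_{i=1}^n r_i x_i = A - B$. Since $\Lambda_+$ is a convex cone closed under addition, both $A$ and $B$ lie in $\Lambda_+$, and they satisfy $A + B = C := \sum_{i=1}^n x_i$ with $\|C\|_h \leq \sigma$ by hypothesis.

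The next step is to translate the partition bound on $\|A\|_h$ and $\|B\|_h$ into a bound on $\|A - B\|_h$. The cone-monotonicity of hyperbolic eigenvalues (if $0 \leq_+ X \leq_+ Y$ then $\lambda_{\max}(X) \leq \lambda_{\max}(Y)$) gives $-B \leq_+ A - B \leq_+ A$, so a first-pass estimate is $\|A-B\|_h \leq \max(\|A\|_h, \|B\|_h) \leq M$. To sharpen this to the stated bound $2\sqrt{\epsilon(2\sigma - \epsilon)}$, I would combine three ingredients: (i) the per-partition norm bound $M$ from Theorem~\ref{thm:hyperbolic_ks_sub_intro}, (ii) the sum constraint $\|C\|_h \leq \sigma$, and (iii) the per-element trace constraint $\tr(x_i) \leq \epsilon$. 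Using the identity $A - B = 2A - C$ together with a hyperbolic Weyl-type inequality of the form $\lambda_{\max}(X + Y) \leq \lambda_{\max}(X) + \lambda_{\max}(Y)$, I would aim to derive $\|A - B\|_h^2 \leq 4\epsilon(2\sigma - \epsilon)$ by algebraic rearrangement of the squared norm.

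The main obstacle is obtaining the tight constant $2\sqrt{\epsilon(2\sigma - \epsilon)}$ in the final step: the naive monotonicity argument yields only $M = (\sqrt{\epsilon}+\sqrt{\sigma/2})^2$, which is strictly weaker than the target whenever $\epsilon \ll \sigma$. The sharpening likely requires going beyond treating Theorem~\ref{thm:hyperbolic_ks_sub_intro} as a black box---instead, one may need to re-open the interlacing-family argument underlying its proof and track the signed objective $\|A - B\|_h$ directly, using the trace and sum constraints together to control the extremal roots of the expected mixed characteristic polynomial of $\sum_i r_i x_i$. This mirrors the way Kyng--Luh--Song refine the Kadison--Singer partition bound into a tighter rank-$1$ discrepancy bound, and the same philosophy should adapt to the hyperbolic setting under the sub-isotropic hypothesis~\eqref{eq:subisotropic_intro}.
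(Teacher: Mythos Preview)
Your approach is exactly the paper's: apply Theorem~\ref{thm:hyperbolic_ks_sub_intro} with $k=2$ to get a partition $S\cup S^c=[n]$ with $\|A\|_h,\|B\|_h\le M:=(\sqrt{\epsilon}+\sqrt{\sigma/2})^2$, then set $r_i=+1$ on $S$ and $r_i=-1$ on $S^c$. The paper does \emph{not} re-open the interlacing-family argument to track the signed objective directly; it treats Theorem~\ref{thm:hyperbolic_ks_sub_intro} as a black box and combines the partition bound with $\|A+B\|_h\le\sigma$, exactly the two ingredients you list.

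Your concern about the constant is well-founded, and in fact points to an error in the paper's own proof. The paper asserts in one line that
\[
\Big\|\sum_{i\in S}x_i-\sum_{i\notin S}x_i\Big\|_h \;\le\; \sigma-2(\sqrt{\epsilon}+\sqrt{\sigma/2})^2 \;=\; 2\sqrt{\epsilon(2\sigma-\epsilon)},
\]
but neither the inequality nor the equality is correct: expanding gives $\sigma-2M=-2\epsilon-2\sqrt{2\epsilon\sigma}<0$, which is certainly not equal to $2\sqrt{\epsilon(2\sigma-\epsilon)}$, and no justification is given for the inequality step. (The argument is a faulty transplant of the isotropic case, where $A+B=e$ exactly and Fact~\ref{fac:eigen_linear_trans} gives $\lambda_i(A-B)=2\lambda_i(A)-1$, hence $\|A-B\|_h\le 2M-1$; that shift trick is unavailable when $A+B$ is not a multiple of $e$.) What actually follows from the black-box partition bound, via $\lambda_{\max}(A-B)\le\lambda_{\max}(A)-\lambda_{\min}(B)\le M$ and symmetrically for $-\lambda_{\min}(A-B)$, is precisely your bound $\|A-B\|_h\le M=(\sqrt{\epsilon}+\sqrt{\sigma/2})^2$. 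So you have reproduced the paper's method and correctly identified that the stated sharper constant does not follow from it; your instinct that obtaining $2\sqrt{\epsilon(2\sigma-\epsilon)}$ would require more than the black-box application is right.
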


We note that this result is incomparable with \cite{kls19} due to the following reasons: 1) \cite{kls19} only works for rank-1 matrices while our result holds for arbitrary rank vectors in the hyperbolic cone; 2) the upper bound of \cite{kls19} depends on $\|\sum_{i=1}^n X_i^2\|^{1/2}$ while our result depends on the hyperbolic trace and spectral norm of the sum of vectors.

To obtain a hyperbolic discrepancy upper bound for arbitrary vectors (as in the case of Conjecture~\ref{conj:matrix_spencer}), we can apply hyperbolic Chernoff bound (Theorem~\ref{thm:intro_main}) and get the following discrepancy result which holds with high probability:

\begin{corollary}\label{Cor:hyper_discrep_high_prob}
Let $h$ be a degree-$d$ hyperbolic polynomial with respect to $e\in \R^m$. We are given vectors $x_1,x_2,\cdots, x_n \in \R^m$ such that $\|x_i\|_h\leq 1$ and $\rank(x_i)\leq s$ for all $i \in [n]$ and some $s\in \mathbb{N}_+$. Then for uniformly random signs $r\sim \{-1, 1\}^n$, 
\begin{align*}
    \left\| \sum_{i=1}^n r_i x_i \right\|_h \leq O(\sqrt{n \log (s+1)})
\end{align*}
holds with probability at least $0.99$.
\end{corollary}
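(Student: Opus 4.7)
The plan is to apply Theorem~\ref{thm:intro_main} directly; Corollary~\ref{Cor:hyper_discrep_high_prob} is essentially a quantitative repackaging of the tail bound.

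First I would check that the hypotheses of Theorem~\ref{thm:intro_main} are satisfied. We are given $\rank(x_i)\le s$ for every $i\in[n]$, which matches the rank condition. For the variance proxy, the assumption $\|x_i\|_h\le 1$ gives
\begin{align*}
    \sum_{i=1}^n \|x_i\|_h^2 \;\le\; n,
\end{align*}
so we may take $\sigma^2 = n$ in the statement of Theorem~\ref{thm:intro_main}.

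Next, I would invoke the tail bound of Theorem~\ref{thm:intro_main}: for every $t>0$ and the absolute constant $c>0$,
\begin{align*}
    \Pr_{r\sim\{\pm 1\}^n}\!\left[\left\|\sum_{i=1}^n r_i x_i\right\|_h > t\right] \;\le\; \exp\!\left(-\frac{c\,t^2}{n\log(s+1)}\right).
\end{align*}
I would then set $t = C\sqrt{n\log(s+1)}$ for a constant $C$ chosen large enough that $\exp(-cC^2) \le 0.01$ (so any $C \ge \sqrt{\log(100)/c}$ suffices). With this choice the right-hand side is at most $0.01$, yielding
\begin{align*}
    \Pr_{r\sim\{\pm 1\}^n}\!\left[\left\|\sum_{i=1}^n r_i x_i\right\|_h \le C\sqrt{n\log(s+1)}\right] \;\ge\; 0.99,
\end{align*}
which is exactly the claim.

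There is no real obstacle here: the work is entirely front-loaded into Theorem~\ref{thm:intro_main}, and the only minor care points are (i) using $\log(s+1)$ rather than $\log s$ so that the bound remains meaningful for $s=1$ (handled already by the theorem statement), and (ii) absorbing the constant $C$ coming from the $0.99$-probability requirement into the $O(\cdot)$ notation. Should one want a high-probability version with failure probability $\delta$ instead of $0.01$, the same argument gives $O(\sqrt{n\log(s+1)\log(1/\delta)})$ by taking $C$ proportional to $\sqrt{\log(1/\delta)}$.
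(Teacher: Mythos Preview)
Your proposal is correct and matches the paper's approach exactly: the paper derives Corollary~\ref{Cor:hyper_discrep_high_prob} by plugging $\sigma^2\le n$ into Theorem~\ref{thm:intro_main} and choosing $t=O(\sigma\sqrt{\log(s+1)})$ so that the tail probability is a small constant. There is nothing to add.
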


This result may not be tight when the ranks of the input vectors are large. It is thus interesting to study whether we can do better to improve the $\sqrt{\log d}$ factor in the non-constructive case. We thus conjecture the following hyperbolic discrepancy bound:

\begin{conjecture}[Hyperbolic Spencer Conjecture]\label{conj:hyperbolic_spencer}
We are given vectors $x_1,x_2,\cdots, x_n \in \R^m$ and a degree $d$ hyperbolic polynomial $h\in\R[z_1,\dots,z_m]$ with respect to $e\in \R^m$, where $\|x_i\|_h\leq 1$ for all $i \in [n]$. Then, there exist signs $r\in \{-1, 1\}^n$, such that
\begin{align*}
    \left\| \sum_{i=1}^n r_i x_i \right\|_h \leq O(\sqrt{n}).
\end{align*}
\end{conjecture}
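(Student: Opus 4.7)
My plan is the classical partial-coloring-and-iterate strategy. The whole conjecture reduces to a partial coloring lemma: there exist signs $r\in\{-1,0,1\}^n$ with at least $n/2$ nonzero entries and $\|\sum_{i=1}^n r_i x_i\|_h\leq c\sqrt{n}$ for an absolute constant $c$. Granted this, I would recurse on the remaining $n/2$ uncolored coordinates with the rescaled input $\{x_i\}$ (the hypothesis $\|x_i\|_h\le 1$ is preserved) and sum a geometric series
\begin{align*}
    \sum_{k\geq 0}c\sqrt{n/2^k}=O(\sqrt{n}),
\end{align*}
which gives the full coloring.

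To establish the partial coloring lemma I would combine the two new tools in this paper: the hyperbolic Chernoff bound (Theorem~\ref{thm:intro_main}) and the hyperbolic anti-concentration bound (Theorem~\ref{thm:anti-concen_intro}). Define the symmetric convex body
\begin{align*}
    K := \Big\{r\in\R^n:\Big\|\sum_{i=1}^n r_i x_i\Big\|_h\leq c\sqrt{n}\Big\}.
\end{align*}
A Gaussian-reweighted version of Theorem~\ref{thm:intro_main} (subgaussian moment bounds imply the same tail for standard Gaussian $r$, as in the scalar setting) shows that $K$ has Gaussian measure at least $1/2$ for $c$ large enough. Banaszczyk's theorem then guarantees a point in $K\cap[-1,1]^n$ with $\Omega(n)$ coordinates at $\pm 1$, yielding the desired partial coloring. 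The role of Theorem~\ref{thm:anti-concen_intro} would be to run a constructive variant: a sticky Brownian walk inside $K\cap[-1,1]^n$ needs the boundary hyperplanes $\{\lambda_{\max}(\sum r_i x_i - y)=\pm\Delta\}$ to be ``thin'' so that the walk does not stall in the interior; Theorem~\ref{thm:anti-concen_intro} provides exactly this quantitative non-degeneracy.

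The main obstacle is the $\sqrt{\log(s+1)}$ factor in Theorem~\ref{thm:intro_main}. With the rank bounded only by $s\leq d$, the body $K$ has Gaussian measure $\geq 1/2$ only once $c\geq C\sqrt{\log d}$, so the partial coloring bound degrades to $O(\sqrt{n\log d})$, and iterating still produces $O(\sqrt{n\log d})$ rather than $O(\sqrt{n})$. This is precisely the gap that makes the matrix Spencer conjecture (Conjecture~\ref{conj:matrix_spencer}) hard: the random coloring already achieves $O(\sqrt{n\log d})$ by Corollary~\ref{Cor:hyper_discrep_high_prob}, and shaving the $\sqrt{\log d}$ requires a deterministic construction rather than a probabilistic one.

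A promising route, which I expect to be where the real work lies, is to generalize the interlacing-families/barrier technique of~\cite{mss15,kls19}. One would try to associate to the ensemble $\{r_i x_i\}$ a hyperbolic analogue of the expected characteristic polynomial, show that it is real-rooted, and bound its largest root by a multivariate barrier argument in the hyperbolicity cone. The key missing ingredient is a hyperbolic mixed-polynomial identity: one needs the averaging operator $\E_{r_i\sim\{\pm 1\}}$ to preserve hyperbolicity in the relevant direction, and a substitute for the matrix trick $\det(I+\sum r_ix_i x_i^\top)$ that extends beyond the determinant polynomial. Finding such an identity, and extending the barrier function / multivariate argument of~\cite{mss15} to work directly with roots of a general hyperbolic polynomial, is where I would expect the hard work to concentrate.
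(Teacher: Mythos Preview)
The statement you are attempting to prove is labeled a \emph{conjecture} in the paper, not a theorem. The paper offers no proof; it explicitly states that Conjecture~\ref{conj:hyperbolic_spencer} generalizes the matrix Spencer conjecture (Conjecture~\ref{conj:matrix_spencer}), which is itself open, and lists ``Resolving the hyperbolic Spencer conjecture?'' among its open problems. The only positive result the paper provides toward it is Corollary~\ref{Cor:hyper_discrep_high_prob}, which gives the weaker $O(\sqrt{n\log(s+1)})$ bound via the hyperbolic Chernoff inequality---exactly the bound you yourself arrive at.

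Your analysis is therefore accurate in diagnosing why the partial-coloring-plus-Chernoff route stalls: the $\sqrt{\log(s+1)}$ factor in Theorem~\ref{thm:intro_main} forces the body $K$ to have the desired Gaussian measure only at radius $c\sqrt{n\log d}$, so the iteration reproduces Corollary~\ref{Cor:hyper_discrep_high_prob} rather than the conjecture. (A minor technical point: Banaszczyk's theorem does not by itself produce a partial coloring with $\Omega(n)$ coordinates at $\pm 1$; you would need a Gluskin/Giannopoulos-type partial-coloring lemma or the Lovett--Meka framework, and those require Gaussian measure at least $2^{-\epsilon n}$ rather than $1/2$. This does not change your conclusion, since either way the radius is $\Theta(\sqrt{n\log d})$.)

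Your speculative ``promising route'' via interlacing families and barrier functions is a reasonable research direction, and indeed the paper's Theorem~\ref{thm:hyperbolic_ks_sub_intro} already pushes Br{\"a}nd{\'e}n's hyperbolic Kadison--Singer machinery in that direction. But the obstacles you name---a hyperbolic mixed-polynomial identity that survives the $\E_{r_i\sim\{\pm 1\}}$ averaging for vectors not in the hyperbolic cone, and a barrier argument that does not degrade with rank---are genuine and unresolved. A proof of this conjecture would in particular resolve matrix Spencer, so what you have written is a research plan, not a proof.
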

Note that Conjecture~\ref{conj:hyperbolic_spencer} is more general than the matrix Spencer conjecture (Conjecture~\ref{conj:matrix_spencer}). And for constant degree $d$ or constant maximum rank $s$, this conjecture is true by Corollary~\ref{Cor:hyper_discrep_high_prob}.

\subsection{Related work}

\ifdefined\isarxiv
\paragraph{Chernoff-type bounds}
\else
\paragraph*{Chernoff-type bounds}
\fi
There is a long line of work generalizing the classical scalar Chernoff-type bounds to the matrix Chernoff-type bound \cite{r99,aw02,rv07, t12,mjc14,glss18,ks18,nrr20,aby20,jll20}.  
\cite{r99,rv07} showed a Chernoff-type concentration of spectral norm of matrices which are the outer product of two random vectors. \cite{aw02} first used Laplace transform and Golden-Thompson inequality \cite{g65,t65} to prove a Chernoff bound for general random matrices. It was improved by \cite{t12} and \cite{o09} independently. 
\cite{mjc14} proved a series of matrix concentration results via Stein's method of exchangeable pairs. Our work further extends this line of research from matrix to hyperbolic polynomials and can fully recover the result of \cite{aw02}. On the other hand, \cite{glss18} showed an expander matrix Chernoff bound.
\cite{ks18} prove a new matrix Chernoff bound for Strongly Rayleigh distributions.

\ifdefined\isarxiv
\paragraph{Hyperbolic polynomials}
\else
\paragraph*{Hyperbolic polynomials}
\fi
The concept of hyperbolic polynomials was originally studied in the field of partial differential equations \cite{g51, h83, k95}. G{\"u}ler \cite{g97} first studied the hyperbolic optimization (hyperbolic programming), which is a generalization of LP and SDP. Later, a few algorithms \cite{r06,mt14, rs14, ren16,np18, ren19} were designed for hyperbolic programming. On the other hand, a lot of recent research focused on the equivalence between hyperbolic programming and SDP, which is closely related to the ``Generalized Lax Conjecture'' and its variants \cite{hv07,lpr05, b14,kpv15,sau18,ami19,rrsw19}. In addition to the hyperbolic programming, hyperbolic polynomial is a key component in resolving Kadison-Singer problem \cite{mss15, b18} and constructing bipartite Ramanujan graphs \cite{mss18}. Gurvits \cite{g06,g07} proved some Van der Waerden/Schrijver-Valiant like conjectures for hyperbolic polynomials, giving sharp bounds for the capacity of polynomials. \cite{s19} gave an approach to certify the non-negativity of polynomials via hyperbolic programming, generalizing the Sum-of-Squares method.

\ifdefined\isarxiv
\paragraph{Discrepancy theory}
\else
\paragraph*{Discrepancy theory}
\fi
For discrepancy theory, we give a few literature in Section~\ref{sec:app_discrepancy} and we provide more related work here. For Kadison-Singer problem, after the breakthrough result \cite{mss15}, Anari and Oveis Gharan \cite{ag14} generalized it for Strongly Rayleigh distributions. Alishahi and Barzegar \cite{ab20} extended the ``paving conjecture''
to real stable polynomials\footnote{A polynomial is real stable if it is hyperbolic with respect to every $e\in \R^n_>0$.}. 
Zhang and Zhang \cite{zz21} further relaxed the determinant polynomial in \cite{ag14} and \cite{kls19} to homogeneous real-stable polynomials.  More recently, \cite{hrs21,djr21} proved some special cases of the matrix Spencer conjecture.
For algorithmic results, Bansal \cite{b10} proposed the first constructive version of partial coloring for discrepancy minimization. Based on this work, more approaches \cite{lm15, r17,lrr17,es18, bdgl18,dntt18} were discovered in recent years. 
For applications of the discrepancy theory, \cite{ag14,ag15} used the Strongly Rayleigh version of Kadison-Singer theorem to improve the integrality gap of the Asymmetric Traveling Salesman Problem. \cite{lz20} used the rank-1 matrix Spencer theorem in \cite{kls19} to obtain a two-sided spectral rounding result. For more applications, we refer to the excellent book by Matousek \cite{m09}.


\subsection{Technique overview}
In this section, we provide a proof overview of our results. We first show how prove hyperbolic Chernoff bounds by upper bounding each polynomial moment. After that, we show how to apply our new concentration inequality to prove hyperbolic anti-concentration. Finally, we show how to relax the isotropic condition in \cite{b18},  and also how to get a more general discrepancy result via hyperbolic concentration. 

\subsubsection{Our technique for hyperbolic Chernoff bound for Rademacher sum}
The main idea of our proof of hyperbolic Chernoff bound is to upper bound the polynomial moments. 

By definition, the hyperbolic spectral norm of $X$ is the $\ell_\infty$ norm of the eigenvalues $\lambda(X)$. Inspired by the proof of the matrix Chernoff bound by Tropp \cite{tro18}, we can consider the $\ell_{2q}$ norm of $\lambda(X)$, for $q\geq 1$. When the hyperbolic polynomial $h$ is the determinant polynomial, this norm is just the Schatten-$2q$ norm of matrices. For general hyperbolic polynomials, we define hyperbolic-$2q$ norm as $\|x\|_{h,2q}:=\|\lambda(x)\|_{2q}$. By the result of \cite{bgls01}, hyperbolic-$2q$ norm is actually a norm in $\R^m$. And the following inequality 
\ifdefined\isarxiv
(by Fact~\ref{fac:norm_h_bound_by_norm_h_q} and Lemma~\ref{lem:lyapunov}) 
\fi
shows the connection between a hyperbolic spectral norm and hyperbolic-$2q$ norm:
\begin{align*}
    \E_{ r \sim \{\pm 1\}^n }[\|X\|_h] \leq \Big(\E_{ r \sim \{\pm 1\}^n }\big[\|X\|_{h,2q}^{2q}\big]\Big)^{1/(2q)}.
\end{align*}

In order to compute $\|X\|_{h,2q}^{2q}=\sum_{i=1}^{\rank(X)} \lambda_i(X)^{2q}$, we use a deep result about hyperbolic polynomials: the Helton-Vinnikov Theorem \cite{hv07}, which proved a famous conjecture by Lax \cite{lax57}, to translate between hyperbolic polynomials and matrices. The theorem is stated as follows.

\begin{theorem}[\cite{hv07}]\label{thm:hv07}
Let $f \in \R[x, y, z]$ be hyperbolic with respect to $e = (e_1, e_2, e_3)\in \R^3$. Then there exist symmetric real matrices $A,B,C\in \R^{d\times d}$ such that $f = \det(xA + yB + zC)$ and $e_1A + e_2B + e_3C \succ 0$.
\end{theorem}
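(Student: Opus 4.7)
The plan is to follow the algebraic-geometric strategy of Helton--Vinnikov, combining the geometry of hyperbolic plane curves with the theory of theta functions on Riemann surfaces. The statement is equivalent to the Lax conjecture, so a purely elementary proof is not to be expected; the work lives naturally in the complex algebraic geometry of the projective curve cut out by $f$.

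First, I would reduce to a generic situation. By a small perturbation argument that preserves hyperbolicity (perturbations of hyperbolic polynomials are hyperbolic as long as the direction $e$ stays interior to the hyperbolicity cone), it suffices to prove the theorem when the complex projective curve $\mathcal{C}=\{f=0\}\subset \mathbb{CP}^2$ is smooth and irreducible. The general case then follows by taking a limit of the matrices $A,B,C$ after normalization, using compactness of the space of normalized determinantal representations. After a linear change of coordinates (which transforms the triple $(A,B,C)$ by an invertible linear combination) I may assume $e=(0,0,1)$, so that hyperbolicity of $f$ amounts to saying that $f(x,y,t)$, viewed as a polynomial in $t$, has $d$ real roots for every real $(x,y)\ne 0$, and the positivity condition becomes $C\succ 0$.

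The second step is to translate hyperbolicity into geometry. Hyperbolicity with respect to $e=(0,0,1)$ is equivalent to the real locus $\mathcal{C}(\R)\subset \mathbb{RP}^2$ consisting of $\lfloor d/2\rfloor$ nested ovals, plus a pseudoline if $d$ is odd. This is the maximal real configuration allowed by Harnack's theorem for a smooth plane curve of genus $g=\binom{d-1}{2}$; such curves are M-curves in dividing position, which means complex conjugation acts on $\mathcal{C}$ with maximal fixed locus and $\mathcal{C}\setminus \mathcal{C}(\R)$ has two connected components swapped by conjugation. The third step, the heart of the proof, is to produce a symmetric determinantal representation from this geometry. By a classical theorem of Dixon, symmetric determinantal representations of $f$ are in bijection with real theta characteristics $\vartheta$ on $\mathcal{C}$ that are non-special, i.e.\ $H^0(\mathcal{C},\vartheta)=0$, and the positivity condition $C\succ 0$ is equivalent to $\vartheta$ lying in a specific connected component of the real Picard group $\mathrm{Pic}^{g-1}(\mathcal{C})(\R)$. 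I would then invoke Riemann's theta function on the Jacobian $\mathrm{Jac}(\mathcal{C})$ to construct $\vartheta$ explicitly; the nested-ovals configuration is used precisely to show that the theta divisor avoids the ``wrong'' real component. The matrix entries of $A,B,C$ are finally recovered as residues of a basis of meromorphic differentials tied to $\vartheta$, and symmetry is enforced by a suitable choice of dual basis.

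The main obstacle is the construction of a real, non-special theta characteristic with the correct positivity: verifying that the candidate built from theta functions actually lies in the desired component of $\mathrm{Pic}^{g-1}(\mathcal{C})(\R)$ requires a careful analysis of the action of complex conjugation on the Jacobian and of the Abel--Jacobi map restricted to the real ovals. A modern alternative route I would keep in reserve is the moment-matrix proof of Plaumann--Vinzant (for smooth curves) or Hanselka's proof using quadratic modules from real algebraic geometry; both sidestep theta functions but still crucially exploit the same nested-ovals geometry that the hyperbolicity of $f$ encodes.
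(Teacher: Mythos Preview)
The paper does not prove this theorem at all: it is stated as a black-box citation of Helton--Vinnikov \cite{hv07} and used only through Gurvits's corollary (Corollary~\ref{cor:hv_2_vars}) to translate two-dimensional hyperbolic slices into matrix eigenvalue problems. There is no ``paper's own proof'' to compare against.

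Your proposal is a faithful outline of the actual Helton--Vinnikov argument (reduction to smooth curves, nested-ovals geometry of real hyperbolic curves, Dixon's bijection between symmetric determinantal representations and non-special real theta characteristics, and the theta-function construction on the Jacobian), and you correctly identify the delicate point as locating the theta characteristic in the right real component of $\mathrm{Pic}^{g-1}$. That is the genuine proof, and it is far beyond the scope of this paper, which only \emph{applies} the result. If the assignment was to supply the proof the paper gives, the right answer is simply that the paper cites \cite{hv07} and moves on; if the assignment was to sketch a proof of the Lax conjecture itself, your outline is on target, though of course each step you list is substantial and the perturbation/limit argument for the non-smooth case needs care (one must control the determinantal representations uniformly so they do not degenerate in the limit).
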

Gurvits \cite{gur04} proved a corollary
(Corollary~\ref{cor:hv_2_vars})
that for any $m$-variate hyperbolic polynomial $h$, and $x,y\in \R^m$, there exist two symmetric matrices $A,B\in \R^{d\times d}$ such that for any $a,b\in \R$, $\lambda(ax+by)=\lambda(aA+bB)$, where the left-hand side means the hyperbolic eigenvalues of the vector $ax+by$ and the right-hand side means the eigenvalues of the matrix $aA+bB$.

Therefore, we try to separate and consider one random variable $r_i$ at a time. We first consider the expectation over $r_1$. By conditional expectation, let $X_2:=\sum_{i=2}^n r_i x_i$ and we have
\begin{align*}
    \E_{ r \sim \{\pm 1\}^n }\big[\|X\|_{h,2q}^{2q}\big] = \E_{r_2,\dots,r_n\sim \{\pm 1\}}\left[\E_{r_1\sim \{\pm 1\}}\left[\|r_1x_1 + X_2\|_{h,2q}^{2q}\right]\right],
\end{align*}
By Corollary~\ref{cor:hv_2_vars}, 
there exist two matrices $A_1,B_1$ such that $\lambda(r_1x_1 + X_2)=\lambda(r_1 A_1 + B_1)$ holds for any $r_1$. 
And it follows that
\begin{align*}
    \E_{r_1\sim \{\pm 1\}}\left[\|r_1x_1 + X_2\|_{h,2q}^{2q}\right] = \E_{r_1\sim \{\pm 1\}}\left[\|r_1 A_1 + B_1\|_{2q}^{2q}\right].
\end{align*}
It becomes much easier to compute the expected Schatten-$2q$ norm of matrices. We can prove that
\begin{align*}
    \E_{ r \sim \{\pm 1\}^n }\big[\|X\|_{h,2q}^{2q}\big] \leq \sum_{k_1=0}^q {\binom{2q}{2k_1}\|x_1\|_h^{2k_1}\cdot  \E_{r_2,\dots,r_n}\left[\|X_2 \|_{h,2q-2k_1}^{2q-2k_1} \right]}.
\end{align*}
Now, we can iterate this process for the remaining expectation $\E_{r_2,\dots,r_n}\left[\|X_2 \|_{h,2q-2k_1}^{2q-2k_1} \right]$. After $n-1$ iterations, we get that
\begin{align}\label{eq:intro_final_exp}
    \left(\E_{r \sim \{\pm 1\}^n}\left[\left\|X\right\|_{h,2q}^{2q}\right]\right)^{1/(2q)} \leq \sqrt{2q-1}\cdot s^{1/(2q)}\cdot \sigma,
\end{align}
where $\sigma^2 =\sum_{i=1}^n \|x_i\|_h^2$ and $s$ is the maximum rank of $x_1,\dots,x_n$. Then, by taking $q:=\log(s)$ and $\|X\|_h \leq \|X\|_{h,2q}^{2q}$, we get the desired upper bound for the expectation $\E_{r\sim \{\pm 1\}^n}[\|\sum_{i=1}^n r_i x_i\|_h]$ in Theorem~\ref{thm:intro_main}.  

To obtain the concentration probability inequality, We can apply the result of Ledoux and Talagrand \cite{lt13} for the concentration of Rademacher sums in a normed linear space, which will imply:
\begin{align}\label{eq:intro_hyper_concentrate}
    \Pr_{ r \sim \{ \pm 1 \}^n }[ \| X \|_h > t ] \leq 2\exp \Big(-t^2 \Big/ \Big( 32\E_{ r \sim \{\pm 1\}^n }[\|X\|_h^2] \Big) \Big).
\end{align}
However, we need to verify that the hyperbolic spectral norm $\|\cdot\|_h$ is indeed a norm, which follows from the result of G{\aa}rding \cite{g59}.
Since by Khinchin-Kahane inequality
\ifdefined\isarxiv
(Theorem~\ref{thm:khinchin_kahane}),
\else
(\cite[Theorem A.16]{sz20})
\fi
the second moment of $\|X\|_h$ can be upper-bounded via the first moment. Hence, we can put our expectation upper bound into Eq.~\eqref{eq:intro_hyper_concentrate} and have
\begin{align*}
    \Pr_{ r \sim \{\pm 1\}^n } \left[ \left\| X \right\|_h > t \right]\leq C_1\exp\left(-\frac{C_2t^2}{\sigma^2 \log (s+1)}\right),
\end{align*}
for constants $C_1,C_2>0$, and hence Theorem~\ref{thm:intro_main} is proved. We defer the formal proof in 
\ifdefined\isarxiv
Section~\ref{sec:hyper_proof1}.
\else
the full version \cite[Section B]{sz20}.
\fi

\subsubsection{Our technique for hyperbolic Chernoff bound for positive vectors}
We can use similar techniques in the previous section to prove Theorem~\ref{thm:hyper_chernoff_positive_intro}.

For any random vectors $\mathsf{x}_1,\dots,\mathsf{x}_n\in \Lambda_+$, we may assume $\|\mathsf{x}_i\|_h\leq 1$. Using the Taylor expansion of the mgf, we can show that:
\begin{align}\label{eq:pos_mgf}
    \Pr\left[\lambda_{\max}\left(\sum_{i=1}^n \mathsf{x}_i\right)\geq t\right] \leq \inf_{\theta >0}~e^{-\theta t}\cdot\sum_{q\geq 0}\frac{\theta^q}{q!}\E\left[\left\|\sum_{i=1}^n \mathsf{x}_i\right\|_{h,q}^q\right].
\end{align}
Then, for the $q$-th moment, we separate $\mathsf{x}_1$ and $\sum_{i=2}^n \mathsf{x}_i$ and have
\begin{align*}
    \mathbb{E}_{\geq 1}\left[\left\|\sum_{i=1}^n x_i\right\|_{h,q}^q\right] = \mathbb{E}_{\geq 2} \mathbb{E}_1\left[\tr\left[(A_1+B_1)^q\right]\right],
\end{align*}
where $A_1$ and $B_1$ are two PSD matrices obtained via Gurvits's result
(Corollary~\ref{cor:hv_2_vars})
such that $A_1$ depends on $\mathsf{x}_1$ and $B_1$ depends on $\mathsf{x}_2,\dots,\mathsf{x}_n$. The next step is different from the case of Rademacher sum, since we cannot drop half of the terms by the distribution of $\mathsf{x_1}$. Instead, we can fully expand the matrix products in the trace and use Horn's inequality to upper bound the eigenvalue products. We have
\begin{align*}
    \mathbb{E}_{\geq 2} \mathbb{E}_1\left[\tr\left[(A(x_1)+B)^q\right]\right]\leq \mathbb{E}_{1} \left[\sum_{k_1=0}^q\binom{q}{k_1}\lambda_{\max}(\mathsf{x}_1)^{k_1}\cdot \mathbb{E}_{\geq 2} \left[\left\|\sum_{i=2}^n \mathsf{x}_i\right\|_{h,q-k_1}^{q - k_1}\right]\right].
\end{align*}
By repeating this process, we finally have
\begin{align*}
    \mathbb{E}\left[\left\|\sum_{i=1}^n \mathsf{x}_i\right\|_{h,q}^q\right]\leq  \E\left[ \sum_{\substack{k_1,\dots,k_n\geq 0\\k_1+\cdots+k_n=q}} \binom{q}{k_1,\dots,k_n} \prod_{i=1}^n \lambda_{\max}(\mathsf{x}_i)^{k_i}\cdot d \right]
    \leq d\cdot \E\left[\left(\sum_{i=1}^n \|\mathsf{x}_i\|_h\right)^q\right],
\end{align*}
where the first step follows from the $\E[\|{\sf x}_n\|_{h,k_n}^{k_n}]\leq d\cdot \lambda_{\max}({\sf x}_n)^{k_n}$.
Then, we put the above upper bound into Eq.~\eqref{eq:pos_mgf}, which gives:
\begin{align*}
    \Pr\left[\lambda_{\max}\left(\sum_{i=1}^n \mathsf{x}_i\right)\geq t\right] \leq \inf_{\theta >0}~e^{-\theta t}\cdot d\cdot \prod_{i=1}^n \E\left[e^{\theta\|\mathsf{x}_i\|_h}\right].
\end{align*}
Now, we use some similar calculations in the matrix case \cite{t12} to prove that
\begin{align*}
    \Pr\left[\lambda_{\max}\left(\sum_{i=1}^n \mathsf{x}_i\right)\geq t\right] \leq \inf_{\theta>0}~d\cdot \exp\left(-\theta t + (e^\theta - 1)\mu_{\max}\right).
\end{align*}
By taking $\theta := \log(t/\mu_{\max})$ and $t:=(1+\delta)\mu_{\max}$, we get that
\begin{align}\label{eq:tech_max_eig}
    \Pr\left[\lambda_{\max}\left(\sum_{i=1}^n \mathsf{x}_i\right)\geq (1+\delta)\mu_{\max}\right] \leq&~ d\cdot \left(\frac{(1+\delta)^{1+\delta}}{e^{\delta}}\right)^{-\mu_{\max}}
\end{align}

For the minimum eigenvalue case, we can define $\mathsf{x}'_i:=e-\mathsf{x}_i$ for $i\in [n]$. Then, by the property of hyperbolic eigenvalues
(Fact~\ref{fac:eigen_linear_trans})
and the assumption that $\|\mathsf{x}_i\|_h\leq 1$, we know that $\mathsf{x}'_i$ are also in the hyperbolic cone and $\lambda_{\max}(\mathsf{x}'_i)=1-\lambda_{\min}(\mathsf{x}'_i)$. Therefore, we can obtain the Chernoff bound for the minimum eigenvalue of $\mathsf{x}$ by applying Eq.~\eqref{eq:tech_max_eig} with $\mathsf{x}_i'$. We defer the formal proof in
\ifdefined\isarxiv
Section~\ref{sec:hyper_proof2}.
\else
the full version \cite[Section C]{sz20}.
\fi
\subsubsection{Our technique for hyperbolic anti-concentration}
In this part, we will show how to prove the hyperbolic anti-concentration result (Theorem~\ref{thm:anti-concen_intro}) via the hyperbolic Chernoff bound for vectors in the hyperbolic cone (Theorem~\ref{thm:hyper_chernoff_positive_intro}).

In \cite{ost19}, they studied the unate functions on hypercube $\{-1, 1\}^n$, which is defined as the function being increasing or decreasing with respect to any one of the coordinates. Then, they showed that the Rademacher measure of a unate function is determined by the expansion of its indicator set in hypercube. In particular, for the maximum hyperbolic eigenvalue, it is easy to see that the indicator function $\left[\lambda_{\max}\left(\sum_{i=1}^n \epsilon_i x_i^j-y_j\right)\in [-\Delta, \Delta]\right]$ is unate when $x_i\in \Lambda_+$. Hence, we can show the anti-concentration inequality by studying the expansion in the hypercube, which by \cite{ay21}, is equivalent to lower-bound the minimum eigenvalue of each vector. However, for the initial input $x_i$, we only assume that $\sum_{i=1}^n\lambda_{\min}(x_i)^2\geq 1$, but we need a $\Omega(\frac{1}{\sqrt{\log d}})$ lower bound for each $x_i$ to prove the theorem. To amplify the minimum eigenvalue, we follow the proof in \cite{ay21} that uses a random hash function to randomly assign the input vectors into some buckets and considers the sum of the vectors in each bucket as the new input. They proved that the ``bucketing'' will not change the distribution. Then, we can use Theorem~\ref{thm:hyper_chernoff_positive_intro} to lower bound the minimum hyperbolic eigenvalue of each bucket, which is a sum of independent random vectors in the hyperbolic cone. Hence, we get that
\begin{align*}
    \Pr\left[\lambda_{\min}\left(\sum_{i=1}^n z_{i,j}x_i\right) \leq \Omega(\frac{1}{\sqrt{\log d}}) \right]\leq \frac{1}{10},
\end{align*}
which $z_{i,j}\in \{0, 1\}$ is a random variable indicating that $x_i$ is hashed to the $j$-th bucket. Then, by the standard Chernoff bound for negatively associated random variables, we can prove that most of the buckets have large minimum eigenvalues, which concludes the proof of the hyperbolic anti-concentration theorem. We defer the formal proof in Section~\ref{sec:anticon}.

\subsubsection{Our technique for hyperbolic discrepancy}
To relax the isotropic condition in \cite{b18}, we basically follow their proof. The high-level idea is to construct a compatible family of polynomials\footnote{The compatible family of polynomials is closely related to the interlacing family in \cite{mss15,mss18}. 
\ifdefined\isarxiv
See Definition~\ref{def:compatible}.
\else
See \cite[Definition E.16]{sz20}.
\fi
} such that the probability in the hyperbolic Kadison-Singer problem (Theorem~\ref{thm:hyperbolic_ks_sub_intro}) can be upper-bounded by the largest root of the expected polynomial of the family, which can be further upper-bounded by the largest root of the mixed hyperbolic polynomial $h[v_1,\dots,v_n]\in \R[x_1,\dots,x_m,y_1,\dots,y_n]$, defined as $h[v_1,\dots,v_n]:=\prod_{i=1}^m (1-y_i D_{v_i}) h(x)$,
where $D_{v_i}$ is the directional derivative with respect to $v_i$. In particular, we can consider the roots of the linear restriction $h[v_1,\dots,v_n](te+\mathbf{1})\in \R[t]$. Then, using G{\aa}rding's result \cite{g59} on hyperbolic cone, we know that the largest root equals the minimum $\rho>0$ such that the vector $\rho e + \mathbf{1}$ is in the hyperbolic cone $\Gamma_+$ of $h[v_1,\dots,v_n]$, which can be upper-bounded via similar techniques in \cite{mss15,kls19} to iteratively add each vector $v_i$ while keeping the sum in the hyperbolic cone. Our key observation is that the proof in \cite{b18} essentially proved that
\begin{align*}
    \frac{\epsilon \mu e + \left(1-\frac{1}{n}\right)\delta \sum_{i=1}^n v_i }{1+\frac{\mu-1}{n}}+\mathbf{1}\in \Gamma_+
\end{align*}
holds for any vectors $v_i\in \Lambda_+$. Hence, once we assume that $\|\sum_{i=1}^n v_i\|_h\leq \sigma$, then by the convexity of the hyperbolic cone, we get that $\rho\leq \frac{\left(\epsilon \mu + \left(1-\frac{1}{n}\right)\delta\sigma\right) }{1+\frac{\mu-1}{n}}$, which will imply the upper bound in Theorem~\ref{thm:hyperbolic_ks_sub_intro}.
We defer the formal proof in
\ifdefined\isarxiv
Section~\ref{sec:hy_ks}.
\else
the full version \cite[Section E]{sz20}.
\fi

To obtain the discrepancy result for arbitrary vectors (Corollary~\ref{Cor:hyper_discrep_high_prob}), we can use the hyperbolic Chernoff bound for Rademacher sum (Theorem~\ref{thm:intro_main}) to derive the discrepancy upper bound. For any vectors $x_1,\dots,x_n$ with maximum rank $s$, by setting $t=O(\sigma\sqrt{\log s})$ in Theorem~\ref{thm:intro_main}, we get that $\left\| \sum_{i=1}^n r_i x_i \right\|_h \leq O(\sigma \sqrt{\log s})$
holds with high probability for uniformly random signs $r\sim \{\pm 1\}^n$.

\subsection{Discussion and Open problems}

In this paper, we initiate the study of concentration with respect to the hyperbolic spectral norm, and we generalize several classical concentration and anti-concentration results to the hyperbolic polynomial setting. Our results are closely related to the discrepancy theory and pseudorandomness. We provide some open problems in below.
\ifdefined\isarxiv
\paragraph{Tighter hyperbolic Chernoff bound?}
\else
\paragraph*{Tighter hyperbolic Chernoff bound?}
\fi
Our current result has a worse dependence on the variance $\sigma^2$ than the matrix Chernoff bound \cite{tro15}. Can we match the results when $h=\det(X)$? We note that there is a limitation for using the techniques like Golden-Thompson inequality and Lieb's theorem, which were used in \cite{o09, t12} to improve the original matrix Chernoff bound \cite{aw02}, to tighten our result. Because for any symmetric matrix $X$, we can define a mapping such that $\phi(X)$'s eigenvalues are the $p$-th power of $X$'s eigenvalues for any $p>0$, where the mapping is just $X^p$. However, we cannot find such a mapping for vectors with respect to the hyperbolic eigenvalues. Some new techniques may be required to get a hyperbolic Chernoff bound matching the matrix results.

\ifdefined\isarxiv
\paragraph{Resolving the hyperbolic Spencer conjecture?}
\else
\paragraph*{Resolving the hyperbolic Spencer conjecture?}
\fi

Inspired by the matrix Spencer conjecture (due to Meka \cite{m14}), we came up with a more general conjecture for hyperbolic discrepancy. Can we prove or disprove this conjecture? It is also interesting to study the connection between hyperbolic Spencer conjecture and the generalized Lax conjecture \cite{hv07,lpr05, b14,kpv15,sau18,ami19,rrsw19}. If we assume the matrix Spencer conjecture and the generalized Lax conjecture, can we prove the hyperbolic Spencer conjecture? On the other hand, in a very recent work by Reis and Rothvoss \cite{rr20}, they conjectured a weaker matrix Spencer by considering the Schatten-$p$ norm of matrices. We can also define such an $\ell_p$ version of the hyperbolic Spencer conjecture by looking at the $\ell_p$-norm of hyperbolic eigenvalues (the hyperbolic-$p$ norm). 
Any progress towards the $\ell_p$-hyperbolic Spencer conjecture will provide more insights in matrix and hyperbolic discrepancy theory. 

\ifdefined\isarxiv
\paragraph{Fooling hyperbolic cone?}
\else
\paragraph*{Fooling hyperbolic cone?}
\fi
One of the results in this paper is showing an anti-concentration inequality with respect to the hyperbolic spectral norm, which generalizes the results in \cite{ost19,ay21}. They actually combined the anti-concentration results with the Meka-Zuckerman \cite{mz13} framework to construct PRGs fooling polytopes/positive spectrahedrons. Hence, an open question in complexity theory and pseudorandomness is: can we apply the hyperbolic anti-concentration inequality to construct a PRG fooling positive hyperbolic-spectrahedrons, or even hyperbolic cones?

\paragraph*{Concentration of random tensors?}
Tensor concentration is another natural generalization of matrix concentration. Although there have been a large number of works on this problem \cite{l06,l11,al12,aw15,v20,alm21}, it is still unclear what is the optimal concentration bound for the Euclidean norm of random tensor $X\in \R^{n^d}$, even in the simple case when $X=x_1\otimes \cdots \otimes x_d$ for random vectors $x_1,\dots,x_d\in \R^n$. On the other hand, people also care about whether random tensors are well-conditioned, which is more related to TCS problems including tensor decompositions and learning Gaussian mixtures. The current results \cite{v20,a15,bcmv14} have a large gap between the matrix case. For these tensor concentration problems, is it possible to study them via hyperbolic polynomials and obtain tighter bounds?

\ifdefined\isarxiv
\newpage
{\hypersetup{linkcolor=black}
\tableofcontents
}
\fi
\ifdefined\isarxiv
\else
\bibliographystyle{plainurl}
\bibliography{ref}
\fi

\ifdefined\isarxiv
\newpage
\appendix
\section{Preliminaries}

\subsection{Notations}

For a vector $x$, we use $\| x \|_0$ to denote the number of non-zeros, use $\| x \|_1$ to denotes its $\ell_1$ norm, and use $\| x \|_p$ to denote its $\ell_p$ norm for $0<p\leq \infty$.

We use $r \in \{\pm 1 \}^n$ to denote $n$ i.i.d. random variables where each $r_i$ is $1$ with probability $1/2$ and $-1$ otherwise.

The general definition of semi-norm and norm is as follows:
\begin{definition}[Semi-norm and norm]
Let $\|\cdot \|:V\rightarrow \R$ be a nonnegative function on vector space $V$. We say $\|\cdot \|$ is a semi-norm if it satisfies the following properties:
For all $a\in \R$ and $x,y\in V$,
\begin{itemize}
    \item $\|x+y\|\leq \|x\|+\|y\|$;
    \item $\|ax\|=|a|\cdot \|x\|$.
\end{itemize}
If furthermore, $\|x\|=0$ implies $x=0$ the zero vector of $V$, then we say $\|\cdot \|$ is a norm.
\end{definition}

\begin{definition}[Normed linear space]
A normed linear space is a vector space over $\R$ or $\C$, on which a normed is defined.
\end{definition}


\subsection{Basic definitions of hyperbolic polynomials}

We provide the definition of hyperbolic polynomial.
\begin{definition}[Hyperbolic polynomial]
A homogeneous polynomial $h:\R\rightarrow \R$ is hyperbolic with respect to a vector $e\in \R^m$ if $h(e) \ne 0$, and for all $x \in \R^m$, the univariate polynomial $t \mapsto h(te - x)$ has only real zeros.
\end{definition}

The following fact shows how to factorize a hyperbolic polynomial, which easily follows from the homogeneity of the polynomial:
\begin{fact}[Hyperbolic polynomial factorization]\label{fac:hyper_factor}
For a degree-$d$ polynomial $h\in \R[z_1,\dots,z_m]$ hyperbolic with respect to $e\in \R^m$, we have 
\begin{align*}
    h(te-x)=h(e)\prod_{i=1}^d (t-\lambda_i(x))
\end{align*}
where $\lambda_1(x) \geq \lambda_2(x) \geq \cdots \geq \lambda_d(x)$ are real roots of $h(te-x)$.
\end{fact}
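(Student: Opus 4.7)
The plan is to read off the factorization directly from the two hypotheses packaged in the definition of a hyperbolic polynomial: homogeneity of degree $d$ and reality of the roots of $t \mapsto h(te-x)$. First I would fix an arbitrary $x \in \R^m$ and set $p(t) := h(te-x)$, regarded as an element of $\R[t]$.

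Next I would identify the degree and the leading coefficient of $p$. Because $h$ is homogeneous of degree $d$, expanding $h(te-x)$ as a polynomial in $t$ collects at most $t^d$ worth of $t$-powers, and the coefficient of $t^d$ is precisely $h(e)$ (set $x=0$ and use $h(te) = t^d h(e)$, or equivalently note that the only way to produce $t^d$ from the multilinear expansion of each monomial of $h$ is to pick the $te$-term in every factor). In particular $\deg p = d$, since $h(e) \ne 0$ by the hyperbolicity hypothesis.

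Then I would apply the hyperbolicity assumption itself: by definition, $p(t)$ splits completely over $\R$, so it has exactly $d$ real roots counted with multiplicity. Order them $\lambda_1(x) \ge \lambda_2(x) \ge \cdots \ge \lambda_d(x)$. The fundamental theorem of algebra, combined with the leading-coefficient computation above, then gives
\begin{align*}
    p(t) \;=\; h(e)\prod_{i=1}^d (t - \lambda_i(x)),
\end{align*}
which is exactly the claimed identity. No step should be an obstacle here; the only thing worth being careful about is justifying the leading-coefficient calculation rigorously via homogeneity rather than assuming it, and observing that $h(e) \ne 0$ is what forces $\deg p = d$ (so no roots are ``at infinity'' and all $d$ eigenvalues are genuinely real numbers).
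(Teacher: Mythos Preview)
Your proof is correct and takes essentially the same approach as the paper, which does not spell out a proof but simply remarks that the factorization ``easily follows from the homogeneity of the polynomial.'' Your argument unpacks exactly that: homogeneity gives the leading coefficient $h(e)\neq 0$, and hyperbolicity supplies $d$ real roots, so the factorization is immediate.
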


All the vectors with nonnegative hyperbolic eigenvalues form a cone, which is proved by G{\aa}rding \cite{g59}. It is a very important object related to the geometry of hyperbolic polynomials. The formal definition is as follows:
\begin{definition}[Hyperbolic cone]\label{def:hyper_cone}
For a degree $d$ hyperbolic polynomial $h$ with respect to $e\in \R^m$, its hyperbolic cone is
\begin{align*}
    \Lambda_+(e) := \{x: \lambda_d(x) \geq 0\}.
\end{align*}
The interior of $\Lambda_+^m$ is 
\begin{align*}
    \Lambda_{++}(e) := \{x: \lambda_d(x) > 0\}.
\end{align*}
\end{definition}
G{\aa}rding \cite{g59} showed the following fundamental properties of the hyperbolic cone:
\begin{theorem}[\cite{g59}]\label{thm:hyperbolic_cone_gar}
Suppose $h\in \R[z_1,\dots,z_m]$ is hyperbolic with respect to $e\in \R^n$. Then,
\begin{enumerate}
    \item $\Lambda_+(e),\Lambda_{++}(e)$ are convex cones.
    \item $\Lambda_++(e)$ is the connected component of $\{x\in \R^m: h(x)\ne 0\}$ which contains $e$.
    \item $\lambda_{\min}:\R^m\rightarrow \R$ is a concave function, and $\lambda_{\max}:\R^m\rightarrow \R$ is convex.
    \item If $e'\in \Lambda_{++}(e)$, then $h$ is also hyperbolic with respect to $e'$ and $\Lambda_{++}(e')=\Lambda_{++}(e)$.
\end{enumerate}
\end{theorem}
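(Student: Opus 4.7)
The plan is to prove the four properties in a logical order that respects their dependencies: the cone (scaling) part of (1) follows immediately from homogeneity; then (2) identifies $\Lambda_{++}(e)$ with the connected component of $\{h\ne 0\}$ containing $e$; property (4) is established via a homotopy along that component; convexity (the remaining half of (1)) then drops out of (4); and (3) follows from convexity via a variational formula.

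The scaling part is immediate: homogeneity gives $h(te-\alpha x)=\alpha^d\, h((t/\alpha)e-x)$ for $\alpha>0$, so $\lambda_i(\alpha x)=\alpha\lambda_i(x)$, and both cones are closed under positive scaling. For (2), let $C_e$ denote the connected component of $\{x:h(x)\ne 0\}$ containing $e$. For the inclusion $\Lambda_{++}(e)\subseteq C_e$, take $x\in\Lambda_{++}(e)$ and consider $\gamma(s)=(1-s)e+sx$; by homogeneity and Fact~\ref{fac:hyper_factor}, $h(\mu e+x)=h(e)\prod_i(\mu+\lambda_i(x))\ne 0$ for $\mu\ge 0$, so $\gamma$ avoids $\{h=0\}$. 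For $C_e\subseteq\Lambda_{++}(e)$, pick any continuous path $\gamma:[0,1]\to C_e$ from $e$ to $y$; the roots $\lambda_i(\gamma(s))$ vary continuously, begin at $\lambda_i(e)=1$, and cannot cross $0$ since a vanishing root would force $h(-\gamma(s))=(-1)^d h(\gamma(s))=0$, contradicting $\gamma(s)\in C_e$. Hence all $\lambda_i(y)>0$.

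For (4), given $e'\in\Lambda_{++}(e)=C_e$ and any $x\in\R^m$, I want all roots of $q(t)=h(te'-x)$ to be real. Choose a path $\gamma$ in $C_e$ from $e$ to $e'$ and consider the one-parameter family $p_s(t):=h(t\gamma(s)-x)$; its leading coefficient is $h(\gamma(s))\ne 0$, so it has constant degree $d$, and at $s=0$ it is real-rooted. The main obstacle is ruling out the transition in which two real roots collide and split into a complex conjugate pair. The standard argument uses that the real-root count (with multiplicity) of $p_s$ equals the count of real intersections of the line $\{-x+t\gamma(s):t\in\R\}$ with the hypersurface $\{h=0\}$; this count equals $d$ at $s=0$, is upper semi-continuous, and can only drop by an even number when the line becomes tangent to $\{h=0\}$. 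A tangency at some $s^\ast$ would produce a common zero of $h$ and $D_{\gamma(s^\ast)}h$ at $-x+t^\ast\gamma(s^\ast)$, which a careful continuity argument (using that $h(\gamma(s))\ne 0$ pins the leading behavior and that the total degree in $t$ is preserved) shows cannot propagate into complex roots while staying in $C_e$. Once (4) is proved, $\Lambda_{++}(e')=\Lambda_{++}(e)$ because both equal $C_e=C_{e'}$.

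Convexity of $\Lambda_{++}(e)$ then follows easily: for $x,y\in\Lambda_{++}(e)$, (4) gives $\Lambda_{++}(y)=\Lambda_{++}(e)$, so $x\in\Lambda_{++}(y)$, meaning $h(ty-x)$ has only positive roots; equivalently $h(sy+x)\ne 0$ for $s\ge 0$, and by scaling $h(sy+tx)\ne 0$ for all $s,t>0$, putting the entire open positive cone through $x,y$ inside $C_e=\Lambda_{++}(e)$. Convexity of $\Lambda_+(e)$ follows by taking closures. Finally, for (3), one uses the characterization $\lambda_{\min}(x)=\sup\{\mu\in\R:x-\mu e\in\Lambda_+(e)\}$ (which holds because the eigenvalues of $x-\mu e$ are $\lambda_i(x)-\mu$): given $x_1,x_2$ and $\alpha\in[0,1]$, if $\mu_j\le\lambda_{\min}(x_j)$, then $x_j-\mu_j e\in\Lambda_+(e)$, and by convexity $(1-\alpha)(x_1-\mu_1 e)+\alpha(x_2-\mu_2 e)\in\Lambda_+(e)$, yielding $\lambda_{\min}((1-\alpha)x_1+\alpha x_2)\ge (1-\alpha)\mu_1+\alpha\mu_2$; taking sups gives concavity. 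The convexity of $\lambda_{\max}$ is symmetric via $\lambda_{\max}(x)=\inf\{\mu:\mu e-x\in\Lambda_+(e)\}$.
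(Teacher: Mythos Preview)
The paper does not prove this theorem; it is stated as a classical result of G\aa rding and cited from \cite{g59} without proof, so there is no in-paper argument to compare against. Your proposal is an attempt to reconstruct the classical proof.

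Your treatment of the scaling half of (1), of (2), of the deduction of convexity from (4), and of (3) via the variational characterizations $\lambda_{\min}(x)=\sup\{\mu:x-\mu e\in\Lambda_+\}$ and $\lambda_{\max}(x)=\inf\{\mu:\mu e-x\in\Lambda_+\}$ are all correct and are the standard arguments.

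The genuine gap is in (4). The open--closed homotopy strategy along a path in $C_e$ is the right framework, and closedness (a limit of real-rooted degree-$d$ polynomials with nonvanishing leading coefficient is real-rooted) is fine. But your openness argument does not work. The real-root count with multiplicity is \emph{not} upper semi-continuous: for $p_s(t)=t^2+s$ it equals $2$ for $s\le 0$ and $0$ for $s>0$. Even if it were, upper semi-continuity would allow the count to \emph{drop}, which is exactly what you must prevent. The tangency condition you write down (a common zero of $h$ and $D_{\gamma(s^\ast)}h$) only identifies where a bifurcation \emph{could} occur; the phrase ``a careful continuity argument \ldots\ shows cannot propagate into complex roots'' is the assertion to be proved, not an argument for it.

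What actually closes this gap in the literature is a genuinely additional ingredient: either (i) G\aa rding's original complex-analytic route, which reformulates hyperbolicity with respect to $v$ as the nonvanishing of $h$ on the half-tube $\R^m+i\R_{>0}v$ and then uses that the zero set of the holomorphic function $h$ on $\C^m$ has real codimension two (so it cannot disconnect a connected open tube over $C_e$), or (ii) Nuij's approximation, in which one perturbs $h$ to a strictly hyperbolic polynomial (all $t\mapsto h(te-x)$ have simple roots), proves (4) there by the implicit function theorem (simple real roots stay real under small real perturbations), and passes to the limit using the already-established closedness. Bare real continuity of roots is not enough; one of these mechanisms is needed.
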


For simplicity, we may use $\Lambda_+$ and $\Lambda_{++}$ to denote $\Lambda_+(e),\Lambda_{++}(e)$, when $e$ is clear from context.
In this paper, we always assume that $e$ is any fixed vector in the hyperbolic cone of $h$. 

We define the trace, rank and spectral norm respect to hyperbolic polynomial $h$.
\begin{definition}[Hyperbolic trace, rank, spectral norm]
Let $h$ be a degree $d$ hyperbolic polynomial with respect to $e\in \R^m$. For any $x\in \R^m$,
\begin{align*}
    \tr_h [ x ] := \sum_{i=1}^d \lambda_i(x), \quad \rank(x) := \#\{i: \lambda_i(x)\ne 0\}, \quad \|x\|_h := \max_{i\in [d]}|\lambda_i(x)|=\max \{\lambda_1(x), -\lambda_d(x)\}.
\end{align*}
\end{definition}

We define the $p$ norm with respect to hyperbolic polynomial $h$.
\begin{definition}[$\| \cdot\|_{h,p}$ norm]
For any $p \geq 1$, we define the hyperbolic $p$-norm $\|\cdot \|_{h,p}$ defined as:
\begin{align*}
    \|x\|_{h,p} := \|\lambda(x)\|_{p} = \Big( \sum_{i=1}^d |\lambda_i(x)|^p \Big)^{1/p} \quad \forall x\in \R^m.
\end{align*}
\end{definition}

It has been shown that $\|\cdot\|_h$ and $\|\cdot \|_{h,p}$ are indeed norms:
\begin{theorem}[\cite{g59, b18, ren19}]\label{thm:hyperbolic_norm}
$\|\cdot\|_h$ is a semi-norm.

Furthermore, if $\Lambda_+$ is regular, i.e., $(\Lambda_+ \cap -\Lambda_+)=\{0\}$, then $\|\cdot\|_h$ is a norm on $\R^m$.
\end{theorem}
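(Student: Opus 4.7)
The plan is to establish the three defining properties of a semi-norm directly from the factorization in Fact~\ref{fac:hyper_factor} and the properties of the hyperbolic cone stated in Theorem~\ref{thm:hyperbolic_cone_gar}, and then derive the norm property from regularity. Nonnegativity is immediate since $\|x\|_h$ is a maximum of absolute values.

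For absolute homogeneity, I would first handle positive scalars. Given $a > 0$, using the degree-$d$ homogeneity of $h$, I compute $h(te - ax) = a^d h((t/a)e - x) = h(e) \prod_{i=1}^d (t - a\lambda_i(x))$, which shows $\lambda_i(ax) = a \lambda_i(x)$ and hence $\|ax\|_h = a\|x\|_h$. For negation, the substitution $s = -t$ yields $h(te + x) = h(e) \prod_{i=1}^d (t - (-\lambda_{d-i+1}(x)))$, so the multiset of hyperbolic eigenvalues of $-x$ is $\{-\lambda_i(x)\}_{i=1}^d$; taking maxima of absolute values gives $\|-x\|_h = \|x\|_h$. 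Combining these yields $\|ax\|_h = |a|\cdot\|x\|_h$ for all $a\in\R$.

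The main step is the triangle inequality, and the key tool is G{\aa}rding's theorem (Theorem~\ref{thm:hyperbolic_cone_gar}), which asserts that $\lambda_{\max}$ is convex and $\lambda_{\min}$ is concave on $\R^m$. Combining convexity with the positive homogeneity just established, I get sublinearity of $\lambda_{\max}$ via
\begin{align*}
    \lambda_{\max}(x+y) = 2\lambda_{\max}\!\left(\tfrac{x+y}{2}\right) \leq 2\cdot \tfrac{1}{2}\bigl(\lambda_{\max}(x)+\lambda_{\max}(y)\bigr) = \lambda_{\max}(x)+\lambda_{\max}(y),
\end{align*}
and an analogous superlinearity of $\lambda_{\min}$. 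Since $\|x\|_h = \max\{\lambda_{\max}(x),\, -\lambda_{\min}(x)\}$, both $\lambda_{\max}(x+y)\leq \lambda_{\max}(x)+\lambda_{\max}(y)\leq \|x\|_h+\|y\|_h$ and $-\lambda_{\min}(x+y) \leq -\lambda_{\min}(x)-\lambda_{\min}(y)\leq \|x\|_h+\|y\|_h$, so $\|x+y\|_h \leq \|x\|_h + \|y\|_h$. I do not foresee a serious obstacle here, provided I am careful to invoke G{\aa}rding's concavity/convexity exactly as stated.

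Finally, for the norm property under regularity, suppose $\|x\|_h = 0$. Then every $\lambda_i(x) = 0$, so $\lambda_{\min}(x) \geq 0$ gives $x \in \Lambda_+$, while the negation identity $\lambda_i(-x) = -\lambda_{d-i+1}(x)=0$ gives $-x \in \Lambda_+$, i.e.\ $x \in -\Lambda_+$. Regularity $\Lambda_+ \cap (-\Lambda_+)=\{0\}$ forces $x=0$, which completes the proof.
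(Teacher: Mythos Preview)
The paper does not actually prove this theorem; it is stated with citations to \cite{g59,b18,ren19} and no argument is given. Your proposal supplies a correct self-contained proof using precisely the tools the paper already records: the factorization in Fact~\ref{fac:hyper_factor} for homogeneity (this is essentially Fact~\ref{fac:eigen_linear_trans}), and G{\aa}rding's convexity/concavity of $\lambda_{\max}$/$\lambda_{\min}$ from Theorem~\ref{thm:hyperbolic_cone_gar} for the triangle inequality. This is the standard route taken in the cited references, so your argument is both correct and in line with the literature the paper defers to.
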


\begin{theorem}[\cite{bgls01}]
For any $p\geq 1$, $\|\cdot\|_{h,p}$ is a semi-norm. Moreover, if the hyperbolic cone $\Lambda_+$ is regular, then $\|\cdot\|_{h,p}$ is a norm.
\end{theorem}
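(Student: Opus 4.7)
The plan is to establish the three defining properties of a semi-norm (non-negativity, absolute homogeneity, triangle inequality), and then show that regularity of $\Lambda_+$ upgrades the semi-norm to a norm. Non-negativity is immediate from the definition $\|x\|_{h,p} = (\sum_{i=1}^d |\lambda_i(x)|^p)^{1/p}$.

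For absolute homogeneity, first I would show that $\lambda_i(ax) = a\lambda_i(x)$ (as multisets) for $a > 0$. This follows from the homogeneity of $h$: for $a>0$, we have $h(te-ax) = a^d h((t/a) e - x) = a^d h(e) \prod_i (t/a - \lambda_i(x)) = h(e)\prod_i (t - a\lambda_i(x))$, so by Fact~\ref{fac:hyper_factor} the eigenvalues scale by $a$. For $a = -1$, an analogous expansion shows that the eigenvalues of $-x$ are the negatives of those of $x$ (as a multiset). Combining these, $|\lambda_i(ax)| = |a|\cdot|\lambda_i(x)|$ for all scalars $a$, and therefore $\|ax\|_{h,p} = |a|\cdot \|x\|_{h,p}$.

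The main obstacle is the triangle inequality, and here I would invoke the two-variable consequence of the Helton--Vinnikov theorem (Corollary~\ref{cor:hv_2_vars}, attributed to Gurvits): for any $x,y \in \R^m$, there exist symmetric matrices $A, B \in \R^{d\times d}$ such that $\lambda(\alpha x + \beta y) = \lambda(\alpha A + \beta B)$ for all $\alpha, \beta \in \R$, where the right-hand side denotes matrix eigenvalues. Applied with $(\alpha,\beta) \in \{(1,0),(0,1),(1,1)\}$, this gives
\begin{align*}
    \|x\|_{h,p} = \|A\|_{S_p}, \quad \|y\|_{h,p} = \|B\|_{S_p}, \quad \|x+y\|_{h,p} = \|A+B\|_{S_p},
\end{align*}
where $\|\cdot\|_{S_p}$ denotes the Schatten-$p$ norm. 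The triangle inequality for $\|\cdot\|_{h,p}$ then follows immediately from the well-known fact that the Schatten-$p$ norm is a genuine norm on symmetric matrices (Minkowski's inequality for Schatten norms). This is the key reduction and the conceptual heart of the argument; the reason it is nontrivial is that, a priori, there is no reason for the hyperbolic eigenvalues of $x+y$ to be comparable to those of $x$ and $y$ separately, and Helton--Vinnikov is precisely the tool that provides a faithful matrix model on any two-dimensional linear subspace.

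Finally, to upgrade to a norm under the regularity assumption $\Lambda_+ \cap (-\Lambda_+) = \{0\}$, I would argue as follows. Suppose $\|x\|_{h,p} = 0$. Then all hyperbolic eigenvalues of $x$ vanish, so in particular $\lambda_d(x) \geq 0$, giving $x \in \Lambda_+$. By the homogeneity argument above, the eigenvalues of $-x$ are the negatives of those of $x$, hence also all zero, so $-x \in \Lambda_+$ as well. Regularity then forces $x = 0$, proving the point-separation property. Putting the four pieces together yields the theorem.
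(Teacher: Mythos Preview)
The paper does not actually prove this theorem; it is quoted from \cite{bgls01} in the preliminaries and used as a black box. Your argument is nonetheless correct: reducing the triangle inequality to the Schatten-$p$ triangle inequality via the two-variable Helton--Vinnikov corollary (Corollary~\ref{cor:hv_2_vars}) is valid, and the homogeneity and regularity pieces are routine and handled correctly.

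It is worth noting that your route is different from, and in a sense anachronistic relative to, the cited source. The paper \cite{bgls01} predates Helton--Vinnikov \cite{hv07} by several years; their proof proceeds by purely convex-analytic means, essentially showing that any symmetric convex function composed with the hyperbolic eigenvalue map is convex (a hyperbolic analogue of the Davis--Lewis theorem for matrix spectral functions), from which the semi-norm property of $\|\lambda(\cdot)\|_p$ follows. Your approach is shorter \emph{within the present paper's toolkit}, since Corollary~\ref{cor:hv_2_vars} is already being imported for the main results, but it leans on a deep algebro-geometric theorem where the original argument is self-contained.
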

\subsection{Basic properties of hyperbolic polynomials}

We state a fact for the eigenvalues $\lambda(\cdot)$ of degree-$d$ hyperbolic polynomial $h$.
\begin{fact}[\cite{bgls01}]\label{fac:eigen_linear_trans}
For all $i\in [d]$, 
\begin{align*}
\lambda_i( s \cdot x + t \cdot e )=\begin{cases}
s \cdot \lambda_i(x)+t, & \mathrm{~if~}s\ge 0;\\
s \cdot \lambda_{d-i}(x)+t, & \mathrm{~if~}s<0.
\end{cases}
\end{align*}
\end{fact}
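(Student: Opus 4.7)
The plan is to reduce everything to the factorization provided by Fact~\ref{fac:hyper_factor}. Fix $x \in \R^m$, scalars $s,t \in \R$, and look at the univariate polynomial $\tau \mapsto h(\tau e - (sx+te))$ whose roots are, by definition, the hyperbolic eigenvalues of $sx+te$. A single algebraic rearrangement gives $h(\tau e - (sx+te)) = h((\tau-t)e - sx)$, so the problem is really about how the roots of $u \mapsto h(ue - sx)$ relate to those of $u \mapsto h(ue - x)$.

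For $s \geq 0$, I would split into $s = 0$ (trivial by homogeneity, since every eigenvalue of $te$ is $t$) and $s > 0$. In the latter case, homogeneity of $h$ gives $h((\tau-t)e - sx) = s^{d}\, h\!\left(\tfrac{\tau-t}{s} e - x\right)$, and applying Fact~\ref{fac:hyper_factor} to the inner expression yields $h(e)\prod_{i=1}^d\bigl((\tau-t) - s\lambda_i(x)\bigr)$. The roots are therefore $\{s\lambda_i(x)+t\}_{i=1}^d$, and because $s>0$ preserves order, the sorted-decreasing labeling gives exactly $\lambda_i(sx+te) = s\lambda_i(x)+t$.

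For $s < 0$, write $s = -|s|$ and pull the sign out using the degree-$d$ homogeneity: $h((\tau-t)e - sx) = h\bigl(-((t-\tau)e - |s|x)\bigr) = (-1)^d h((t-\tau)e - |s|x)$, then invoke Fact~\ref{fac:hyper_factor} as in the positive case. After absorbing the sign $(-1)^d$ into the product, the roots come out as $\{t + s\lambda_i(x)\}_{i=1}^d$, identical in value to the positive-$s$ formula. The only difference is the \emph{order}: since $s<0$, the map $\lambda \mapsto t + s\lambda$ is order-reversing, so when we relabel them in decreasing order we pick up the index reversal $i \mapsto d+1-i$ (interpreting the statement's ``$\lambda_{d-i}$'' with the convention matching the paper's indexing), giving the claimed formula.

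There is no real obstacle here beyond bookkeeping; the only place to be careful is the sign-and-reordering step in the $s<0$ case, where one must verify that the factor $(-1)^d$ produced by the sign-flip is precisely cancelled by pulling a $(-1)$ out of each of the $d$ linear factors, and that the decreasing-order convention for $\lambda_1(sx+te) \geq \cdots \geq \lambda_d(sx+te)$ forces the index reversal on the right-hand side. Everything else is a direct application of Fact~\ref{fac:hyper_factor} combined with the homogeneity of $h$.
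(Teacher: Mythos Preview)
The paper does not supply its own proof of this fact; it is simply stated with a citation to \cite{bgls01}. Your argument is the standard one and is correct: the substitution $\tau \mapsto \tau - t$ together with degree-$d$ homogeneity reduces everything to Fact~\ref{fac:hyper_factor}, and the only nontrivial step is the order-reversal when $s<0$, which you handle properly. Your parenthetical about the indexing is also well taken: under the convention $\lambda_1 \geq \cdots \geq \lambda_d$ of Fact~\ref{fac:hyper_factor}, the reversed index should be $d+1-i$ rather than the $d-i$ printed in the statement.
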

Then, we show that the elementary symmetric sum-products of eigenvalues can be computed from the directional derivatives of the polynomial.
\begin{observation}[\cite{bgls01}]\label{obs:symmetric}
For a degree-$d$ hyperbolic polynomial $h$ with respect to $e$, we have
\begin{align*}
    h(te+x) = p(e) \cdot \prod_{i=1}^d (t+\lambda_i(x)) = \sum_{i=0}^d s_i(\lambda (x)) \cdot t^{d-i},
\end{align*}
where $\lambda(x)=(\lambda_1(x), \cdots, \lambda_d(x))$ are the hyperbolic eigenvalues of $x$ and $s_i : \R^d \rightarrow \R$ is the $i$-th elementary symmetric polynomial:
\begin{align*}
    s_i(y) := \begin{cases} 
        \sum_{S\in \binom{[d]}{i}} \prod_{j\in S} y_{j}, & \forall i \in [d];\\
        1 & \text{if}~ i = 0.
    \end{cases}
\end{align*}

Furthermore, for each $i \in \{0,1,\cdots, d\}$, 
\begin{align*}
    h(e) \cdot s_i(\lambda(x))=\frac{1}{(d-i)!} \cdot \nabla^{d-i} h(x) \underbrace{ [e,e,\dots, e] }_{(d-i) \mathrm{~terms}}.
\end{align*}
If $i \in [ d ]$, then $s_i \circ \lambda$ is hyperbolic with respect to $e$ of degree $i$.
\end{observation}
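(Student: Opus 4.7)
The plan is to verify three assertions in turn: the product/symmetric-sum expansion of $h(te+x)$, the derivative formula for $s_i\circ\lambda$, and the hyperbolicity of $s_i\circ\lambda$. The central tool throughout will be Fact~\ref{fac:hyper_factor}, which already gives the factorization $h(te-x)=h(e)\prod_{i=1}^d(t-\lambda_i(x))$.

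\textbf{Step 1: The expansion of $h(te+x)$.} I would first convert Fact~\ref{fac:hyper_factor} into the stated ``$+$''-version. The cleanest route is to use homogeneity of $h$: $h(te+x)=(-1)^d h(-te-x)=(-1)^d h(e)\prod_{i=1}^d(-t-\lambda_i(x))=h(e)\prod_{i=1}^d(t+\lambda_i(x))$. (Alternatively one could apply Fact~\ref{fac:eigen_linear_trans} with $s=-1$ to identify $\{\lambda_i(-x)\}$ with $\{-\lambda_i(x)\}$ as multisets, and then invoke Fact~\ref{fac:hyper_factor} with $x\mapsto -x$.) The second equality, $\prod_{i=1}^d(t+\lambda_i(x))=\sum_{i=0}^d s_i(\lambda(x))\,t^{d-i}$, is just Vieta's formula for elementary symmetric polynomials.

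\textbf{Step 2: The derivative formula.} Starting from the expansion in Step~1, I would differentiate $h(te+x)$ in $t$ exactly $d-i$ times and evaluate at $t=0$. On the polynomial side, $\frac{\d^{d-i}}{\d t^{d-i}}\bigl[h(e)\sum_{j=0}^d s_j(\lambda(x))\,t^{d-j}\bigr]\big|_{t=0}$ picks up only the $j=i$ term and contributes $h(e)\cdot(d-i)!\cdot s_i(\lambda(x))$. On the direct side, $\frac{\d}{\d t}h(te+x)=\nabla h(te+x)[e]$, and iterating gives $\frac{\d^{d-i}}{\d t^{d-i}}h(te+x)\big|_{t=0}=\nabla^{d-i}h(x)[e,\ldots,e]$. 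Equating the two expressions yields the stated identity $h(e)\cdot s_i(\lambda(x))=\frac{1}{(d-i)!}\nabla^{d-i}h(x)[e,\ldots,e]$.

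\textbf{Step 3: Hyperbolicity of $s_i\circ\lambda$.} The degree claim is immediate from Step~2: $\nabla^{d-i}h(x)[e,\ldots,e]$ is a polynomial in $x$ of degree $d-(d-i)=i$. For the real-rootedness, I would fix $x\in\R^m$ and study $t\mapsto s_i(\lambda(te-x))$. Applying the Step~2 identity with $x$ replaced by $te-x$, and using $\frac{\d}{\d t}h(te-x)=\nabla h(te-x)[e]$ iteratively, I get $s_i(\lambda(te-x))=\frac{1}{h(e)(d-i)!}\cdot\frac{\d^{d-i}}{\d t^{d-i}}h(te-x)$. But $h(te-x)=h(e)\prod_{j=1}^d(t-\lambda_j(x))$ is a real-rooted polynomial of degree $d$ in $t$ by Fact~\ref{fac:hyper_factor}, so by Rolle's theorem its $(d-i)$-th derivative is a real-rooted polynomial of degree $i$ in $t$. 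Hence $s_i\circ\lambda$ is hyperbolic with respect to $e$.

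I do not expect any real obstacle: the entire argument is essentially bookkeeping driven by Fact~\ref{fac:hyper_factor} plus classical facts (Vieta, symmetry of mixed partials, Rolle). The one step that most requires care is the sign/indexing manipulation in Step~1 that converts ``$te-x$'' to ``$te+x$'', and the cleanest way to finesse this is the homogeneity argument above, which avoids appealing to the exact index permutation in Fact~\ref{fac:eigen_linear_trans}.
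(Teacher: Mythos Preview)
The paper does not supply its own proof of this observation; it is simply attributed to \cite{bgls01} and stated without argument. Your proposal is correct and is exactly the standard derivation. One small point you do not explicitly check is the nondegeneracy condition $s_i(\lambda(e))\neq 0$ required for hyperbolicity; but since $h(te-e)=(t-1)^d h(e)$ gives $\lambda_j(e)=1$ for all $j$, we have $s_i(\lambda(e))=\binom{d}{i}\neq 0$, so this is immediate.
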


\begin{corollary}
$\tr[x]$ is a linear function.
\end{corollary}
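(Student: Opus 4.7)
The plan is to read linearity off directly from Observation~\ref{obs:symmetric}. Specializing the second displayed identity of that observation to $i = 1$ gives
\begin{align*}
    h(e) \cdot \tr[x] \;=\; h(e) \cdot s_1(\lambda(x)) \;=\; \frac{1}{(d-1)!}\,\nabla^{d-1} h(x)[\,e,e,\ldots,e\,].
\end{align*}
Since $h(e) \neq 0$ by the definition of hyperbolicity, proving that $\tr[\cdot]$ is linear reduces to showing that the right-hand side is a linear function of $x$.

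To establish this, I would use a degree count based on the homogeneity of $h$. Because $h$ is a homogeneous polynomial of degree $d$, every $(d-1)$-fold partial derivative $\partial^\alpha h$ with $|\alpha| = d-1$ is a homogeneous polynomial of degree $1$ in $x$. Expanding
\begin{align*}
    \nabla^{d-1} h(x)[\,e,\ldots,e\,] \;=\; \sum_{|\alpha| = d-1} \binom{d-1}{\alpha}\, e^{\alpha}\, \partial^{\alpha} h(x),
\end{align*}
we see that this quantity is an $\R$-linear combination (with coefficients $\binom{d-1}{\alpha} e^\alpha$ depending only on the fixed direction $e$) of homogeneous degree-$1$ polynomials in $x$, hence itself a homogeneous polynomial of degree $1$, i.e., a linear functional on $\R^m$. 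Dividing through by the nonzero scalar $(d-1)!\cdot h(e)$ preserves linearity and produces $\tr[x]$, as desired.

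Since this is a one-line consequence of Observation~\ref{obs:symmetric} combined with the homogeneity of $h$, there is no substantive obstacle to the argument. The only point worth double-checking is that ``linear'' here means genuinely $\R$-linear and not merely affine; this is automatic because a degree-$1$ homogeneous polynomial has no constant term, so $\tr[0] = 0$, confirming true linearity rather than affinity.
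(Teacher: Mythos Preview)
Your proposal is correct and follows essentially the same approach as the paper: both invoke Observation~\ref{obs:symmetric} at $i=1$ to express $\tr[x]$ via the $(d-1)$-fold derivative of $h$ evaluated in the direction $e$, and then use the fact that $h$ has degree $d$ to conclude this is a degree-$1$ (hence linear) function of $x$. Your version is slightly more explicit (writing out the multinomial expansion and checking $\tr[0]=0$), but there is no substantive difference.
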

\begin{proof}
By Observation~\ref{obs:symmetric}, we have
\begin{align*}
    \tr_h[x]=s_1(\lambda(x)) = \frac{1}{h(e)\cdot (d-1)!}\cdot \nabla^{d-1} h(x) [e,e,\dots, e].
\end{align*}
Since $h$ is of degree $d$, $\nabla^{d-1}h$ is a degree-1 polynomial. Hence, $\tr_h[x]$ is a linear function.
\end{proof}

\subsection{Concentration inequalities}
In general, for any normed linear space,
as mentioned in \cite{lt13}, we have the following concentration result:
\begin{theorem}[Theorem 4.7 in \cite{lt13}]\label{thm:banach_concentration}
Let $x_1,\dots,x_n\in \mathcal{B}$ be a fixed finite sequence in normed linear space $\mathcal{B}$. Let $X=\sum_{i=1}^n r_i x_i$, where $r_1,\dots,r_n$ are independent Rademacher random variables. Then, for every $t>0$,
\begin{align*}
    \Pr_{ r \sim \{ \pm 1 \}^n }[\|X\|_B>t]\leq 2\exp(-t^2/(32\E[\|X\|_B^2])).
\end{align*}
\end{theorem}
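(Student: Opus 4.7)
The plan is to obtain sub-Gaussian concentration for $F(r) := \|\sum_{i=1}^n r_i x_i\|_B$ on the Rademacher cube via the entropy method, with the variance proxy $\E[\|X\|_B^2]$ extracted by a duality argument; a naive bounded-differences approach would only produce the cruder proxy $\sum_i \|x_i\|_B^2$, which can be much larger (e.g.\ in $\ell_\infty$ when the $x_i$'s point in near-orthogonal directions).

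First, I would record two structural properties of $F$: it is convex (as the composition of a linear map with the norm), and coordinate-Lipschitz in the sense $|F(r)-F(r^{(i)})| \le 2\|x_i\|_B$ by the triangle inequality, where $r^{(i)}$ denotes $r$ with the $i$-th sign flipped. To sharpen this Lipschitz bound in a self-bounded direction, for each realisation of $r$ I would invoke Hahn--Banach to pick a norming functional $y^\ast_r\in B^\ast$ with $\|y^\ast_r\|_{B^\ast}\le 1$ and $\langle y^\ast_r, X\rangle = \|X\|_B$. Since $X^{(i)} = X - 2 r_i x_i$ and $F(r^{(i)}) \ge \langle y^\ast_r, X^{(i)}\rangle$, this yields $(F(r)-F(r^{(i)}))_+ \le 2|\langle y^\ast_r, x_i\rangle|$, whence
\begin{align*}
\sum_{i=1}^n \bigl(F(r)-F(r^{(i)})\bigr)_+^2 \;\le\; 4 \sum_{i=1}^n \langle y^\ast_r, x_i\rangle^2 \;\le\; 4\sigma^2,
\end{align*}
where $\sigma^2 := \sup_{\|y^\ast\|_{B^\ast}\le 1}\sum_{i=1}^n \langle y^\ast, x_i\rangle^2$. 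By orthogonality of the Rademachers, $\sum_i \langle y^\ast, x_i\rangle^2 = \E[\langle y^\ast, X\rangle^2] \le \E[\|X\|_B^2]$, so $\sigma^2 \le \E[\|X\|_B^2]$.

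Next, I would feed the self-bounding estimate into the modified log-Sobolev inequality on the discrete cube $\{\pm 1\}^n$ applied to $e^{\lambda F / 2}$. Using the convexity of $F$ to symmetrise the $(F-F^{(i)})_+^2$ terms, the resulting differential inequality for $\psi(\lambda) := \log \E[e^{\lambda(F-\E F)}]$ integrates to $\psi(\lambda) \le 8 \lambda^2 \E[\|X\|_B^2]$, and a standard Chernoff step yields the centred tail $\Pr[F > \E F + s] \le \exp\!\bigl(-s^2/(32\E[\|X\|_B^2])\bigr)$. To recover the un-centred form stated in the theorem, I would split on whether $t \le \sqrt{32\log(2)\cdot \E[\|X\|_B^2]}$ (where the claimed bound is trivial because its right-hand side already exceeds $1$) or larger; in the latter regime Jensen's inequality $\E[F] \le \sqrt{\E[\|X\|_B^2]}$ gives $t - \E[F] \ge t/\sqrt{2}$, and the prefactor of $2$ in the target bound absorbs the change of constants.

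The main obstacle will be the entropy-method step: running the log-Sobolev computation so that the per-coordinate fluctuations $(F - F^{(i)})_+^2$ are replaced by the $r$-dependent dual quantities $\langle y^\ast_r, x_i\rangle^2$ and ultimately by $\sigma^2$ uniformly in $r$. The delicate point is that $y^\ast_r$ varies with $r$, so one must combine the pointwise duality bound with the convexity of $F$ (via a symmetrisation / hypercontractivity argument tailored to the Rademacher semigroup) to extract the supremum $\sigma^2$ cleanly without losing constants.
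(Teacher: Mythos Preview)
The paper does not prove this theorem at all: it is quoted verbatim as Theorem~4.7 of Ledoux--Talagrand \cite{lt13} in the preliminaries and used as a black box (via Corollary~\ref{cor:hyperbolic_prob}) throughout. So there is no ``paper's own proof'' to compare against.

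That said, your proposed route is a legitimate modern proof strategy, and it is \emph{not} the argument in Ledoux--Talagrand. Their proof proceeds by a direct iterative control of the moment generating function (peeling off one Rademacher coordinate at a time and using a quadratic inequality for $\cosh$), together with Khintchine--Kahane to pass between moments. Your approach instead goes through the entropy method / self-bounding framework: the key duality step $(F(r)-F(r^{(i)}))_+ \le 2|\langle y^\ast_r,x_i\rangle|$ is correct, and the resulting uniform bound $\sum_i (F-F^{(i)})_+^2 \le 4\sigma^2$ with $\sigma^2 = \sup_{\|y^\ast\|\le 1}\sum_i \langle y^\ast,x_i\rangle^2 \le \E\|X\|_B^2$ is exactly what the one-sided modified log-Sobolev inequality on the cube needs. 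This is essentially the Boucheron--Lugosi--Massart style argument specialised to convex norms, and it gives a sub-Gaussian upper tail around the mean with variance proxy a constant times $\E\|X\|_B^2$.

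Two caveats. First, your final de-centring step is slightly off: from $\psi(\lambda)\le 8\lambda^2\E\|X\|_B^2$ you get $\Pr[F>\E F + s]\le e^{-s^2/(32\E\|X\|_B^2)}$, but then replacing $s$ by $t-\E F \ge t/\sqrt{2}$ yields an exponent $-t^2/(64\E\|X\|_B^2)$, and the prefactor $2$ does \emph{not} absorb this loss for large $t$ (you already spent it on the trivial regime). You would need to adjust the split threshold or sharpen the log-Sobolev constant. Second, the phrase ``using the convexity of $F$ to symmetrise the $(F-F^{(i)})_+^2$ terms'' is doing real work and should be made precise; in fact, for the upper tail the one-sided bound $\sum_i (F-F^{(i)})_+^2\le 4\sigma^2$ alone already suffices in the standard self-bounding machinery, so convexity is not strictly needed there.
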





For matrices with Schatten-$p$ norm, the expectation of Schatten-$2p$ norm of Rademacher sum can be upper-bounded as follows.
\begin{theorem}[Theorem 3.1 in \cite{tj74}]\label{thm:matrix_p_norm}
Let $p\geq 1$. For a matrix $A$, we use $\| A \|_p$ to denote the Shatten-$p$ norm. 
For any fixed $X_1, X_2, \dots, X_n \in \R^{d \times d}$, and for independent Rademacher random variables $r_1, r_2, \dots, r_n$, we have
\begin{align*}
    \left( \E_{r \sim \{ \pm 1 \}^n } \left[ \left\|\sum_{i=1}^n r_i X_i \right\|_{2p}^{2p} \right] \right)^{1/(2p)}\leq \sqrt{2p-1} \cdot \left( \sum_{i=1}^n \| X_i \|_{2p}^2 \right)^{1/2}
\end{align*}
\end{theorem}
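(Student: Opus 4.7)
My plan is to prove this by induction on $n$, peeling off one Rademacher variable at a time, with the heavy lifting done by a two-point uniform-convexity estimate for Schatten norms.

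The base case $n=1$ is trivial since $\|r_1 X_1\|_{2p} = \|X_1\|_{2p}$ and $\sqrt{2p-1}\geq 1$. For the inductive step, write $S_n = S_{n-1} + r_n X_n$ with $S_{n-1}=\sum_{i=1}^{n-1} r_i X_i$, and average first over $r_n$:
\begin{align*}
    \E_{r_n}\|S_n\|_{2p}^{2p} = \tfrac{1}{2}\bigl(\|S_{n-1}+X_n\|_{2p}^{2p} + \|S_{n-1}-X_n\|_{2p}^{2p}\bigr).
\end{align*}
The key tool I would invoke is the $2$-uniform convexity of the Schatten $q$-norm for $q\geq 2$ (Ball--Carlen--Lieb), which states
\begin{align*}
    \tfrac{1}{2}\bigl(\|A+B\|_q^q + \|A-B\|_q^q\bigr) \leq \bigl(\|A\|_q^2 + (q-1)\|B\|_q^2\bigr)^{q/2}.
\end{align*}
Setting $q=2p$, $A=S_{n-1}$, $B=X_n$ gives the conditional bound
\begin{align*}
    \E_{r_n}\|S_n\|_{2p}^{2p} \leq \bigl(\|S_{n-1}\|_{2p}^2 + (2p-1)\|X_n\|_{2p}^2\bigr)^{p}.
\end{align*}

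Next, I would take expectation over the remaining Rademacher variables and apply Minkowski's inequality in $L^p$ to the random variable $\|S_{n-1}\|_{2p}^2$ and the constant $(2p-1)\|X_n\|_{2p}^2$:
\begin{align*}
    \bigl(\E\|S_n\|_{2p}^{2p}\bigr)^{1/p} \leq \bigl(\E\|S_{n-1}\|_{2p}^{2p}\bigr)^{1/p} + (2p-1)\|X_n\|_{2p}^2.
\end{align*}
Plugging in the inductive hypothesis $(\E\|S_{n-1}\|_{2p}^{2p})^{1/p}\leq (2p-1)\sum_{i=1}^{n-1}\|X_i\|_{2p}^2$ and telescoping yields
\begin{align*}
    \bigl(\E\|S_n\|_{2p}^{2p}\bigr)^{1/p} \leq (2p-1)\sum_{i=1}^n \|X_i\|_{2p}^2,
\end{align*}
and taking $2p$-th roots produces the stated bound.

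The main obstacle is establishing the two-point convexity inequality; everything else is bookkeeping. This inequality is not elementary: the clean modern proof (Ball--Carlen--Lieb) relies on an interpolation/differentiation argument along the curve $t\mapsto \|A+tB\|_{2p}^2$, showing that its second derivative is bounded by $2(2p-1)\|B\|_{2p}^2$, which after integrating twice and symmetrizing gives the displayed inequality. Alternatively, one can follow Tomczak--Jaegermann's original route, which expands $\tr((SS^*)^p)$ directly using independence of the $r_i$ (so that only terms with even multiplicity in each index survive) and then controls the surviving trace monomials via Hölder's inequality for Schatten norms and a combinatorial pairing argument that introduces the $(2p-1)^p$ factor. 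Either route produces the same final constant, and once either is in hand the induction is immediate.
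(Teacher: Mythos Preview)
Your proof is correct. The paper does not prove this statement --- it is quoted from \cite{tj74} as a preliminary --- but it does prove the hyperbolic generalization (Lemma~\ref{lem:p_norm_upper_bound}), and the method there is exactly the Tomczak--Jaegermann alternative you sketch at the end: peel off one Rademacher variable at a time, expand $\tr[(r_1 A_1+B_1)^{2q}]$, average over $r_1$ to kill the odd-parity terms, bound the surviving trace monomials via Horn's inequality (Lemma~\ref{lem:horn}), and control the multinomial coefficients through $\binom{2q}{2k_1,\dots,2k_n}\le M_{2q}^{2q}\binom{q}{k_1,\dots,k_n}$ with $M_{2q}\le\sqrt{2q-1}$ (Lemma~\ref{lem:zyg02_tj74} and Fact~\ref{fac:m_2q}). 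Your Ball--Carlen--Lieb route is shorter and cleaner for the pure matrix statement, packaging all the combinatorics into a single two-point inequality; the trace-expansion route is more hands-on but is the one the paper actually runs, in part because along the way it lets one replace $\|X_i\|_{2p}$ by the spectral norm at the cost of a rank factor (the $s^{1/(2q)}$ appearing in Lemma~\ref{lem:p_norm_upper_bound}). It is worth noting that your BCL induction would also transfer to the hyperbolic $\|\cdot\|_{h,2q}$ norm via Corollary~\ref{cor:hv_2_vars}, since it too only compares two directions ($S_{n-1}$ and $X_n$) at each step; the paper does not take that route.
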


\subsection{Khinchin-Kahane inequality}

In any normed linear space, for any $p,q\geq 1$, the $p$-th moment and $q$-th moment of the norm of Rademacher sum are equivalent up to a constant, as shown in \cite{kah64}, which generalized the Khinchin inequality \cite{khi23}.

\begin{theorem}[\cite{kah64}; also in \cite{lo94,lt13, kr16}]\label{thm:khinchin_kahane}
For all $p, q \in [1, \infty)$, there exists a universal constant $C_{p,q} > 0$ depending only on $p, q$, such that for all choices of normed linear space $\mathcal{B}$, finite sets of vectors $x_1, x_2, \cdots , x_n \in \mathcal{B}$, and independent Rademacher variables $r_1, r_2, \cdots , r_n$,
\begin{align*}
    \left( \E_{ r \sim \{\pm 1\}^n }\left[\left\|\sum_{i=1}^n r_i x_i\right\|^q\right] \right)^{1/q}\leq C_{p,q}\cdot \left( \E_{ r \sim \{ \pm 1\}^n }\left[\left\|\sum_{i=1}^n r_i x_i\right\|^p\right] \right)^{1/p}.
\end{align*}
If moreover $1=p\leq q\leq 2$, then $C_{1,q}=2^{1-1/q}$ is optimal. If $q\in [1,\infty]$, then $C_{1,q}\leq \sqrt{q}$.

\end{theorem}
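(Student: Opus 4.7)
The plan is to prove the two directions of the Khinchin–Kahane inequality separately. The easy direction, $\|X\|_p \leq \|X\|_q$ for $p \leq q$ (with $C_{p,q} = 1$), is immediate from Jensen's inequality applied to the convex function $t \mapsto t^{q/p}$. The substance of the theorem is the reverse direction, where we need to control the larger $L^q$ norm by a constant multiple of the smaller $L^p$ norm, which is special to Rademacher sums.

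For the nontrivial direction, my approach uses three ingredients. First, I would establish a Lévy-type reflection inequality for $X = \sum_{i=1}^n r_i x_i$ in a normed space: for every $t > 0$,
\begin{align*}
\Pr\bigl[\|X\| > 2t\bigr] \leq 4 \Pr\bigl[\|X\| > t\bigr]^2.
\end{align*}
This is obtained by exploiting the sign-symmetry of the $r_i$: one considers the first index $k$ for which the partial sum $\|\sum_{i \leq k} r_i x_i\|$ exceeds $t$, and observes that on this event the remainder satisfies a reflection property that forces $\|X\| > t$ with probability at least $1/2$.

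Second, I would iterate this inequality above a threshold $t_0$ chosen so that Markov's inequality forces $4\Pr[\|X\| > t_0] \leq 1/2$; explicitly $t_0 = C \|X\|_p$ for some absolute constant $C$. Iteration yields the doubly exponential tail bound $\Pr[\|X\| > 2^k t_0] \leq 2^{-2^k+1}$. Third, I would integrate by parts to write $\|X\|_q^q = q \int_0^\infty t^{q-1} \Pr[\|X\| > t] \,\d t$, split the integral at $t_0$, and bound the tail using the doubly exponential decay. This yields $\|X\|_q \leq C_{p,q} \|X\|_p$ with an explicit (though not sharp) constant depending only on $p$ and $q$.

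Obtaining the sharpened bounds $C_{1,q} \leq \sqrt{q}$ and the optimal $C_{1,q} = 2^{1-1/q}$ for $1 \leq q \leq 2$ is the main obstacle. The $\sqrt{q}$ bound can be recovered by careful tracking of constants in the integration step above, or alternatively by reducing to the scalar Khinchin inequality via a one-dimensional projection and using the standard moment generating function estimate $\E[\cosh(\lambda X)] \leq e^{\lambda^2 \|X\|_2^2/2}$ to bound even moments. The optimal constant $2^{1-1/q}$ in the subgaussian range requires the Latała–Oleszkiewicz approach: one proves a sharp two-point inequality of the form $(\E|a + r b|^q)^{1/q} \leq 2^{1-1/q} \E|a + r b|$ for scalars $a, b$ and a Rademacher $r$, and then inducts on $n$ using the independence structure of $r_1, \dots, r_n$. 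Since the qualitative statement and the $\sqrt{q}$ bound are standard textbook material (see \cite{lt13, kr16}), the only delicate step is the optimal constant, which is where I would expect to need the most care.
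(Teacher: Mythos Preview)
The paper does not prove this statement. Theorem~\ref{thm:khinchin_kahane} appears in the Preliminaries section as a cited background result (attributed to \cite{kah64,lo94,lt13,kr16}) and is invoked as a black box, for instance in Claim~\ref{clm:hyperbolic_2_1_norm}. There is nothing in the paper's own arguments to compare your proposal against.

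That said, your outline is a faithful sketch of the standard textbook proofs. The reflection inequality plus iteration above a Markov threshold is exactly the Ledoux--Talagrand route to the qualitative statement and to a constant of order $\sqrt{q}$; and you are right that the sharp constant $C_{1,q}=2^{1-1/q}$ for $1\le q\le 2$ genuinely requires the Lata{\l}a--Oleszkiewicz two-point inequality and an induction on $n$. One minor caution: the specific constant $4$ in your reflection inequality $\Pr[\|X\|>2t]\le 4\Pr[\|X\|>t]^2$ should be checked---different presentations state it with slightly different constants (Kahane's original gives this form, Ledoux--Talagrand phrase it via a median), and this affects only the value of $C_{p,q}$, not the argument's validity.
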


\subsection{Matrix analysis tools}
We state a Lemma for singular values of the product of matrices.
\begin{lemma}[General Horn inequality, Lemma 1.2 in \cite{tj74}]\label{lem:horn}
Let $A_1, \cdots, A_n \in \R^{d \times d}$ be symmetric matrices. Let $\sigma_1(A),\dots, \sigma_d(A)$ denote the singular values of $A$. Then, for each $k \in [d]$,
\begin{align*}
    \sum_{j=1}^k \sigma_j\left(\prod_{i=1}^n A_i\right) \leq \sum_{j=1}^k \prod_{i=1}^n \sigma_j(A_i).
\end{align*}
\end{lemma}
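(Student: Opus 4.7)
The plan is to prove Lemma~\ref{lem:horn} via the classical two-step route: first establish a log-majorization (multiplicative inequality) between the singular values of the product and the coordinate-wise products of singular values, and then upgrade it to the desired weak majorization (additive inequality).

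For the first step, I would work with the $k$-th compound operator $\wedge^k : \R^{d\times d} \to \R^{\binom{d}{k} \times \binom{d}{k}}$, which sends a matrix $M$ to the linear map it induces on the $k$-th exterior power $\wedge^k \R^d$. Two standard facts suffice: (a) $\wedge^k(MN) = (\wedge^k M)(\wedge^k N)$ (functoriality), and (b) $\|\wedge^k M\|_{\mathrm{op}} = \prod_{j=1}^k \sigma_j(M)$ (the top singular value of the compound is the product of the top $k$ singular values of $M$; this is a short SVD computation since the singular values of $\wedge^k M$ are all $k$-fold products of distinct singular values of $M$). Writing $P := \prod_{i=1}^n A_i$, iterating (a), and then applying (b) together with submultiplicativity of the operator norm yields
\begin{align*}
\prod_{j=1}^k \sigma_j(P) \;=\; \big\|\wedge^k P\big\|_{\mathrm{op}} \;=\; \Big\|\prod_{i=1}^n \wedge^k A_i \Big\|_{\mathrm{op}} \;\leq\; \prod_{i=1}^n \|\wedge^k A_i\|_{\mathrm{op}} \;=\; \prod_{i=1}^n \prod_{j=1}^k \sigma_j(A_i).
\end{align*}
Setting $a_j := \sigma_j(P)$ and $b_j := \prod_{i=1}^n \sigma_j(A_i)$ (both nonnegative and decreasing in $j$, the latter because each $\sigma_j(A_i)$ is decreasing in $j$), this says exactly that $a$ is weakly log-majorized by $b$: $\prod_{j=1}^k a_j \leq \prod_{j=1}^k b_j$ for every $k \in [d]$.

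For the second step, I would invoke the classical implication that weak log-majorization implies weak majorization for nonnegative sorted sequences. A clean route is to take logarithms (perturbing to avoid $-\infty$, or simply truncating to the strictly positive portion of the sequences, noting that if some $b_j = 0$ then $b_\ell = 0$ for all $\ell \geq j$ and the product bound then forces $a_\ell = 0$ as well). This gives weak majorization $\log a \prec_w \log b$ for the truncated sequences. Then one applies the Hardy-Littlewood-Polya / Marshall-Olkin principle: whenever $x \prec_w y$ are sorted decreasingly and $\phi$ is convex and nondecreasing, $\phi$ applied componentwise preserves the ordering and $\phi(x) \prec_w \phi(y)$. Taking $\phi = \exp$ converts $\log a \prec_w \log b$ back into $a \prec_w b$, which is precisely $\sum_{j=1}^k a_j \leq \sum_{j=1}^k b_j$ for every $k$.

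The main (and only real) obstacle is the careful bookkeeping in step (ii)---handling the zero singular values and verifying the Hardy-Littlewood-Polya step; both are textbook. Step (i) is a short exterior-algebra computation, and the lemma is Horn's 1950 inequality, so my proposal simply follows the standard textbook route.
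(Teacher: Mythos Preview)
Your proof is correct and follows the standard textbook route (exterior powers for the log-majorization, then the Hardy--Littlewood--P\'olya / Marshall--Olkin principle to pass to weak majorization). Note, however, that the paper does not actually prove this lemma: it is stated as a known tool and attributed to \cite{tj74} (Lemma 1.2 there), so there is no ``paper's own proof'' to compare against. Your write-up is precisely the classical argument one would expect to find in the cited reference or in any matrix analysis text (e.g., Horn's original 1950 paper or Bhatia's \emph{Matrix Analysis}), and the only caveats you flag---handling zero singular values and verifying the convex-nondecreasing step---are indeed routine.
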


We state a Lemma which is implied by H\"{o}lder inequality.
\begin{lemma}[Lyapunov's inequality]\label{lem:lyapunov}
Let $0<r<s<\infty$ and $X$ be a random variable. Then,
\begin{align*}
    \E\left[|X|^r\right]\leq \left(\E\left[|X|^s\right]\right)^{r/s}.
\end{align*}
\end{lemma}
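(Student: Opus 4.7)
The plan is to derive Lyapunov's inequality as a direct application of H\"{o}lder's inequality, which is the standard textbook route and is already hinted at in the sentence preceding the lemma statement.

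The key observation is that $|X|^r = |X|^r \cdot 1$, so the left-hand side can be viewed as the expectation of a product of two random variables: namely $|X|^r$ and the constant random variable $1$. To apply H\"{o}lder in the form $\E[|YZ|]\leq (\E[|Y|^p])^{1/p}(\E[|Z|^q])^{1/q}$ with conjugate exponents $1/p+1/q=1$, I would choose $p := s/r > 1$, which is permitted since $0 < r < s$, and then $q := s/(s-r)$ so that $1/p + 1/q = r/s + (s-r)/s = 1$. Setting $Y = |X|^r$ and $Z = 1$, the inequality becomes
\begin{align*}
    \E[|X|^r] \leq \bigl(\E[|X|^{r \cdot s/r}]\bigr)^{r/s}\cdot (\E[1])^{(s-r)/s} = \bigl(\E[|X|^s]\bigr)^{r/s},
\end{align*}
which is exactly the desired bound.

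There is essentially no obstacle here, since H\"{o}lder's inequality is a classical fact and the only ``choice'' is picking the exponents, which is forced by the ratio $r/s$. The one mild subtlety is to handle the degenerate case where $\E[|X|^s] = \infty$, in which case the inequality is trivial, and to confirm that when $\E[|X|^s] < \infty$ the random variable $|X|^r$ is indeed integrable so that H\"{o}lder applies; this follows immediately because $|X|^r \leq 1 + |X|^s$ pointwise. Once these bookkeeping points are noted, the proof is a one-line invocation of H\"{o}lder with the exponents above.
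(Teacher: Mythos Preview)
Your proposal is correct and follows exactly the approach the paper indicates: the paper does not give a proof of this lemma but simply states that it ``is implied by H\"{o}lder inequality,'' which is precisely the route you take with exponents $p=s/r$ and $q=s/(s-r)$.
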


\subsection{Helton-Vinnikov Theorem}

We state a corollary of Helton-Vinnikov Theorem (Theorem~\ref{thm:hv07}), proved by Gurvits \cite{gur04}:
\begin{corollary}[Proposition 1.2 in \cite{gur04}]\label{cor:hv_2_vars}
Let $h(x)$ be a $m$-variable degree-$d$ hyperbolic polynomial. Then, for $x,y\in \R^m$, there exists two symmetric real matrices $A,B\in \R^{d\times d}$ such that for any $a,b\in \R$, the ordered eigenvalues $\lambda(ax+by)=\lambda(aA+bB)$.
\end{corollary}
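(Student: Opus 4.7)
The plan is to reduce the multivariate statement to the three-variable Helton--Vinnikov theorem by restricting $h$ to the affine subspace spanned by $e, x, y$. Concretely, I define the three-variable polynomial
\begin{align*}
    g(s,a,b) \;:=\; h(se + ax + by) \;\in\; \R[s,a,b].
\end{align*}
Since $h$ is homogeneous of degree $d$, so is $g$. The first step is to verify that $g$ is hyperbolic in the direction $e' := (1,0,0)\in \R^3$. For any fixed $(s,a,b)$ I compute
\begin{align*}
    g\bigl(te' - (s,a,b)\bigr) \;=\; g(t-s,-a,-b) \;=\; h\bigl((t-s)e - (ax+by)\bigr),
\end{align*}
and by hyperbolicity of $h$ with respect to $e$ the right-hand side has only real roots in the variable $u := t-s$, hence only real roots in $t$. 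So $g$ is hyperbolic with respect to $e'$, and moreover $g(e') = h(e)\neq 0$.

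Next I apply Theorem~\ref{thm:hv07} to $g$: there exist symmetric matrices $M, A, B \in \R^{d\times d}$ such that
\begin{align*}
    g(s,a,b) \;=\; \det(sM + aA + bB),
\end{align*}
and the direction constraint gives $M = 1\cdot M + 0\cdot A + 0\cdot B \succ 0$. (A harmless rescaling absorbs the constant $h(e)$ into the determinant representation.) In particular, I can form $M^{-1/2}$ and set $\wt A := M^{-1/2} A M^{-1/2}$ and $\wt B := M^{-1/2} B M^{-1/2}$, which are symmetric real $d\times d$ matrices.

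Finally I compare eigenvalues. By definition the hyperbolic eigenvalues of $ax+by$ are the roots of $t \mapsto h(te - (ax+by)) = g(t,-a,-b) = \det(tM - aA - bB)$. Using the positive definite factorization,
\begin{align*}
    \det(tM - aA - bB) \;=\; \det(M)\cdot \det\bigl(tI - (a\wt A + b\wt B)\bigr),
\end{align*}
so the roots in $t$ are precisely the eigenvalues of the symmetric matrix $a\wt A + b\wt B$. Since this identity holds for all $a,b\in \R$ (including signs), the ordering also matches and we obtain $\lambda(ax+by) = \lambda(a\wt A + b\wt B)$, which is the claim. The only nontrivial step is the invocation of the Helton--Vinnikov theorem; once the restriction $g$ is shown to be a three-variable hyperbolic polynomial with positive definite direction, the rest is linear algebra.
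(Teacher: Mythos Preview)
The paper does not give its own proof of this corollary; it is quoted directly from Gurvits \cite{gur04} as a consequence of the Helton--Vinnikov theorem (Theorem~\ref{thm:hv07}). Your argument---restrict $h$ to the three-dimensional span of $e,x,y$, check that the restricted polynomial $g(s,a,b)=h(se+ax+by)$ is hyperbolic in the direction $(1,0,0)$, apply Helton--Vinnikov to obtain $g(s,a,b)=\det(sM+aA+bB)$ with $M\succ 0$, and then conjugate by $M^{-1/2}$---is exactly the standard derivation and is correct. One small remark: to ensure $\det(M)=h(e)>0$ (so that $M\succ 0$ is consistent), you should first normalize by replacing $h$ with $-h$ if $h(e)<0$; this does not change the eigenvalues and makes your ``harmless rescaling'' comment precise.
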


This Corollary relates the hyperbolic eigenvalues of a vector $ax+by$ to the eigenvalues of matrix $aA+bB$, which allows us to study some properties of hyperbolic eigenvalues using results in matrix theory.
\section{Hyperbolic Chernoff bound for Rademacher sums}

In this section, we will prove the Chernoff bound for hyperbolic polynomials (Theorem~\ref{thm:main}). In Section~\ref{sec:sec3_prelim}, we provide some basic facts on the concentration of hyperbolic norm. Then, we prove the main result in Section~\ref{sec:hyper_proof1} and Section~\ref{sec:exp_hyper_2p_norm}.

\subsection{Preliminaries}\label{sec:sec3_prelim}
Recall that the hyperbolic spectral norm $\|\cdot\|_h$ is defined as:
\begin{align*}
    \|x\|_h := \|\lambda(x)\|_\infty.
\end{align*}

We should assume that the hyperbolic cone $\Lambda_{h,+}$ is regular. By Theorem~\ref{thm:hyperbolic_norm}, we know that $\|\cdot\|_h$ is a norm and $(\R^m, \|\cdot\|_h)$ is a normed linear space.
Applying the general concentration on normed linear space (Theorem~\ref{thm:banach_concentration}) to the $\|\cdot\|_h$ norm, and get the following result:
\begin{corollary}[Concentration of hyperbolic norm]\label{cor:hyperbolic_prob}
Let $X=\sum_{i=1}^n r_i x_i$, where $r_1, r_2, \cdots, r_n$ are independent Rademacher variables and $x_1, x_2, \cdots, x_n \in \R^n$. Then, for every $t>0$,
\begin{align*}
    \Pr_{ r \sim \{ \pm 1 \}^n }[ \| X \|_h > t ] \leq 2\exp \Big(-t^2 / \Big( 32\E_{ r \sim \{\pm 1\}^n }[\|X\|_h^2] \Big) \Big).
\end{align*}
\end{corollary}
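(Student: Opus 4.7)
The plan is to obtain this as an essentially immediate consequence of the Ledoux--Talagrand concentration inequality for Rademacher sums in a normed linear space (Theorem~\ref{thm:banach_concentration}), once we verify that $(\R^m, \|\cdot\|_h)$ genuinely is a normed linear space so that the theorem applies.

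The first step is to invoke the regularity assumption on $\Lambda_{h,+}$ stated just before the corollary. Under this assumption, Theorem~\ref{thm:hyperbolic_norm} (G\aa rding / Br\"and\'en / Renegar) tells us that the hyperbolic spectral norm $\|\cdot\|_h$ is not merely a semi-norm but an honest norm on $\R^m$: the homogeneity and triangle inequality hold, and $\|x\|_h = 0$ forces $x = 0$ because $\Lambda_+ \cap (-\Lambda_+) = \{0\}$. Hence $(\R^m, \|\cdot\|_h)$ is a normed linear space in the sense required by Theorem~\ref{thm:banach_concentration}.

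The second step is to apply Theorem~\ref{thm:banach_concentration} with $\mathcal{B} = (\R^m, \|\cdot\|_h)$ and the fixed vectors $x_1, \dots, x_n \in \R^m$. The theorem yields, for every $t > 0$,
\begin{align*}
\Pr_{r \sim \{\pm 1\}^n}\!\left[ \|X\|_h > t \right] \leq 2 \exp\!\left( - \frac{t^2}{32 \, \E_{r \sim \{\pm 1\}^n}[\|X\|_h^2]} \right),
\end{align*}
which is precisely the claimed inequality.

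There is no genuine obstacle: the only non-trivial content is justifying that $\|\cdot\|_h$ is a norm (rather than a seminorm), which is handled by the regularity hypothesis and Theorem~\ref{thm:hyperbolic_norm}. If one wanted to drop the regularity assumption, one could instead pass to the quotient $\R^m / (\Lambda_+ \cap -\Lambda_+)$ on which $\|\cdot\|_h$ descends to a norm, and then apply Theorem~\ref{thm:banach_concentration} in the quotient; the resulting probability bound pulls back unchanged, since $\|X\|_h$ depends only on the equivalence class. But in the regular setting that the paper adopts, the one-line application of the Ledoux--Talagrand theorem suffices.
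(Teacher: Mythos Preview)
Your proposal is correct and matches the paper's approach exactly: the paper states the regularity assumption, invokes Theorem~\ref{thm:hyperbolic_norm} to conclude $(\R^m,\|\cdot\|_h)$ is a normed linear space, and then applies Theorem~\ref{thm:banach_concentration} directly. Your additional remark about passing to the quotient when regularity fails is a nice observation that the paper does not include.
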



By Theorem~\ref{thm:khinchin_kahane}, we know that any moments of $\|X\|_{h}$ are equivalent up to a constant factor. In particular,
\begin{claim}[Equivalence between first- and second-moment]\label{clm:hyperbolic_2_1_norm}
Given $n$ vectors $x_1, x_2, \cdots, x_n$. Let $r_1, r_2, \cdots, r_n$ denote a sequence of random Rademacher variables.
Let $X = \sum_{i=1}^n r_i x_i$. Then,
\begin{align*}
    ( \E[ \| X \|_h^2 ] )^{1/2}\leq \sqrt{2} \cdot \E[\|X\|_h].
\end{align*}
\end{claim}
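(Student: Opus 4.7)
The plan is to obtain this claim as an immediate specialization of the Khinchin--Kahane inequality (Theorem~\ref{thm:khinchin_kahane}) to the normed linear space $(\R^m, \|\cdot\|_h)$.

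First I would invoke the assumption from the beginning of Section~3 that the hyperbolic cone $\Lambda_{h,+}$ is regular, so that by Theorem~\ref{thm:hyperbolic_norm} the function $\|\cdot\|_h$ is a genuine norm and $(\R^m, \|\cdot\|_h)$ is indeed a normed linear space. This is what makes Theorem~\ref{thm:khinchin_kahane} applicable in the first place.

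Next, I would apply Theorem~\ref{thm:khinchin_kahane} directly with $p = 1$ and $q = 2$ to the finite sequence $x_1, \ldots, x_n$ and the independent Rademacher variables $r_1, \ldots, r_n$. The theorem gives
\begin{align*}
    \left(\E_{r\sim\{\pm 1\}^n}\left[\|X\|_h^{2}\right]\right)^{1/2} \leq C_{1,2}\cdot \E_{r\sim\{\pm 1\}^n}\left[\|X\|_h\right].
\end{align*}
Since the theorem states that for $1 = p \leq q \leq 2$ the optimal constant is $C_{p,q} = 2^{1-1/q}$, we obtain $C_{1,2} = 2^{1-1/2} = \sqrt{2}$, which yields exactly the stated inequality.

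There is no real obstacle here; the only thing worth being careful about is citing the right regime of Khinchin--Kahane (namely the $p=1$, $q=2$ case), so that the explicit constant is the sharp $\sqrt{2}$ rather than a generic $C_{1,2}$. No auxiliary lemma is needed, and no computation beyond substituting $q = 2$ into $2^{1-1/q}$.
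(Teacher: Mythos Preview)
Your proposal is correct and is essentially identical to the paper's own approach: the paper introduces the claim with the sentence ``By Theorem~\ref{thm:khinchin_kahane}, we know that any moments of $\|X\|_{h}$ are equivalent up to a constant factor,'' so the claim is just the $p=1$, $q=2$ case of Khinchin--Kahane applied to the normed space $(\R^m,\|\cdot\|_h)$, exactly as you wrote.
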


We state two useful facts (Fact~\ref{fac:norm_h_q_bound_by_norm_h} and \ref{fac:norm_h_bound_by_norm_h_q}) that upper and lower bound the hyperbolic-$p$ norm by hyperbolic spectral norm.
\begin{fact}\label{fac:norm_h_q_bound_by_norm_h}
Let $h$ denote a $m$-variate degree-$d$ hyperbolic polynomial. For any vector $x$, for any $q>1$, we have
\begin{align*}
    \| x \|_{h,q} \leq d^{1/q} \cdot \| x \|_h.
\end{align*}
\end{fact}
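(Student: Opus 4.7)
The plan is to unfold both norms on the hyperbolic eigenvalues $\lambda(x) = (\lambda_1(x),\dots,\lambda_d(x))$ and apply a term-by-term comparison. By definition, $\|x\|_h = \max_{i \in [d]} |\lambda_i(x)|$, so each individual eigenvalue satisfies $|\lambda_i(x)| \leq \|x\|_h$. Raising this inequality to the $q$-th power (valid since both sides are nonnegative and $q \geq 1$) and summing over $i \in [d]$ gives
\begin{align*}
\|x\|_{h,q}^q = \sum_{i=1}^d |\lambda_i(x)|^q \leq \sum_{i=1}^d \|x\|_h^q = d \cdot \|x\|_h^q.
\end{align*}
Taking $q$-th roots on both sides yields $\|x\|_{h,q} \leq d^{1/q} \cdot \|x\|_h$, which is the desired claim.

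There is no real obstacle here: the fact is a direct consequence of the standard inequality $\|v\|_q \leq n^{1/q} \|v\|_\infty$ between $\ell_q$ and $\ell_\infty$ norms for vectors in $\R^n$, applied with $n = d$ and $v = \lambda(x)$. The only thing worth noting is that the polynomial $h$ has degree exactly $d$, so the number of hyperbolic eigenvalues (counted with multiplicity) is exactly $d$, making the summation well defined and the bound sharp when all eigenvalues have equal absolute value. No properties of hyperbolicity beyond the definition of $\lambda(x)$ as the $d$ real roots of $t \mapsto h(te-x)/h(e)$ are needed.
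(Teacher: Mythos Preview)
Your proof is correct and essentially identical to the paper's own argument, which just writes $\|x\|_{h,q} = \|\lambda(x)\|_q \leq d^{1/q}\|\lambda(x)\|_\infty = d^{1/q}\|x\|_h$ in one line. You have simply unpacked the standard $\ell_q$--$\ell_\infty$ inequality that the paper invokes implicitly.
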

\begin{proof}
We have
\begin{align*}
    \| x \|_{h,q} 
    =  \| \lambda(x) \|_{q} 
    \leq  d^{1/q} \cdot \| \lambda(x) \|_\infty 
    = d^{1/q} \cdot \| x \|_h.
\end{align*}
Thus, we complete the proof.
\end{proof}

\begin{fact}\label{fac:norm_h_bound_by_norm_h_q}
Let $h$ denote a $m$-variate degree-$d$ hyperbolic polynomial. For any vector $x$ and for any $q \geq 1$, we have
\begin{align*}
    \| x \|_{h} \leq \| x \|_{h,q}.
\end{align*}
\end{fact}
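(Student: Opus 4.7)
The plan is to reduce this to the standard inequality $\|v\|_\infty \leq \|v\|_q$ between $\ell_\infty$ and $\ell_q$ norms on $\R^d$, applied to the eigenvalue vector $\lambda(x) = (\lambda_1(x), \ldots, \lambda_d(x))$. By definition of the two hyperbolic norms, $\|x\|_h = \|\lambda(x)\|_\infty$ and $\|x\|_{h,q} = \|\lambda(x)\|_q$, so the claim is exactly the statement that for any vector $v \in \R^d$ and any $q \geq 1$, $\|v\|_\infty \leq \|v\|_q$.

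The first step is to fix any coordinate index $i^\star \in [d]$ that attains the maximum $|\lambda_{i^\star}(x)| = \|\lambda(x)\|_\infty$. Then I would observe the trivial lower bound on the $q$-th power sum:
\begin{align*}
    \|\lambda(x)\|_q^q = \sum_{i=1}^d |\lambda_i(x)|^q \geq |\lambda_{i^\star}(x)|^q = \|\lambda(x)\|_\infty^q,
\end{align*}
where we have dropped all nonnegative summands except the maximum one. Taking $q$-th roots on both sides (valid since $q \geq 1$ and both sides are nonnegative) gives $\|\lambda(x)\|_\infty \leq \|\lambda(x)\|_q$, which is exactly the desired inequality $\|x\|_h \leq \|x\|_{h,q}$.

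There is essentially no obstacle here; the result is independent of the hyperbolic structure and uses only the monotonicity of $\ell_p$ norms with respect to $p$. The only thing worth noting is that the proof does not require $\Lambda_+$ to be regular, so the fact holds even when $\|\cdot\|_{h,q}$ is only a seminorm rather than a norm. Consequently the write-up can be just a two-line computation as above.
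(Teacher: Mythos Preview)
Your proposal is correct and follows essentially the same approach as the paper: both reduce the claim to the standard inequality $\|\lambda(x)\|_\infty \leq \|\lambda(x)\|_q$ via the definitions of $\|\cdot\|_h$ and $\|\cdot\|_{h,q}$. Your version is slightly more explicit in justifying the $\ell_\infty \leq \ell_q$ step, but the argument is the same.
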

\begin{proof}
We have
\begin{align*}
    \| x \|_h = \| \lambda(x) \|_{\infty} \leq \| \lambda(x) \|_q = \| x \|_{h,q}.
\end{align*}
Thus, we complete the proof.
\end{proof}

\begin{fact}\label{fac:norm_h_matrix}
Let $h$ denote a $m$-variate degree-$d$ hyperbolic polynomial. For any vector $x$, if there exists a matrix $A\in \R^{d\times d}$ such that $\lambda(x) = \lambda(A)$, then we have
\begin{align*}
    \|x\|_h = \sigma_1(A).
\end{align*}
\end{fact}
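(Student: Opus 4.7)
The plan is to chain together three elementary identities: the definition of the hyperbolic spectral norm, the hypothesis that $\lambda(x) = \lambda(A)$, and the standard relationship between eigenvalues and singular values of a symmetric matrix. Since the matrix $A$ produced by Corollary~\ref{cor:hv_2_vars} (Gurvits's consequence of Helton--Vinnikov) is real symmetric, its singular values are exactly the absolute values of its eigenvalues, so the largest singular value equals the spectral radius.

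\textbf{Step 1.} Unfold the definitions: by definition $\|x\|_h = \max_{i\in[d]} |\lambda_i(x)|$, where $\lambda_i(x)$ are the hyperbolic eigenvalues of $x$ with respect to $h$ and $e$.

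\textbf{Step 2.} Apply the hypothesis $\lambda(x) = \lambda(A)$ to rewrite this as
\begin{align*}
    \|x\|_h = \max_{i\in [d]} |\lambda_i(x)| = \max_{i\in [d]} |\lambda_i(A)|.
\end{align*}

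\textbf{Step 3.} Use that $A$ is a real symmetric $d \times d$ matrix (this is the form of the matrix given by Corollary~\ref{cor:hv_2_vars}, which is the only mechanism in the paper that produces such an $A$ from a hyperbolic polynomial). For a real symmetric matrix, the singular values satisfy $\sigma_i(A) = |\lambda_{\pi(i)}(A)|$ for some permutation $\pi$ arranging the absolute values in decreasing order; in particular $\sigma_1(A) = \max_{i\in[d]} |\lambda_i(A)|$. Combining this with Step~2 yields $\|x\|_h = \sigma_1(A)$.

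There is essentially no obstacle here: the statement is a bookkeeping consequence of the definitions once one notes that the $A$ supplied by Helton--Vinnikov/Gurvits is symmetric. The only thing to be careful about is making the symmetry of $A$ explicit in the statement or the proof (the hypothesis as written only asserts $\lambda(x) = \lambda(A)$, but implicitly $A$ is the symmetric matrix delivered by Corollary~\ref{cor:hv_2_vars}, since otherwise the eigenvalues $\lambda_i(A)$ need not even be real and the equality with the hyperbolic eigenvalues would be ill-posed).
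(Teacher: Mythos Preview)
Your proof is correct and follows exactly the same chain of equalities as the paper's proof: $\|x\|_h = \|\lambda(x)\|_\infty = \|\lambda(A)\|_\infty = \sigma_1(A)$. Your additional remark making explicit the implicit symmetry assumption on $A$ (inherited from Corollary~\ref{cor:hv_2_vars}) is a useful clarification that the paper omits.
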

\begin{proof}
We have
\begin{align*}
    \|x_1\|_h=\|\lambda(x_1)\|_\infty=\|\lambda(A_1)\|_\infty=\sigma_1(A_1).
\end{align*}
\end{proof}

We state a useful tool from previous work \cite{tj74,zyg02}.
\begin{lemma}[\cite{tj74,zyg02}]\label{lem:zyg02_tj74}
For $q\geq 2$, we have
\begin{align*}
    \binom{2q}{2k_1,\dots,2k_n} \leq M_{2q}^{2q} \cdot \binom{q}{k_1,\dots,k_n},
\end{align*}
where $M_{2q}= ( \frac{(2q)!}{2^q q!} )^{1/(2q)}$.
\end{lemma}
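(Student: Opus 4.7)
The plan is to reduce the claim to an elementary product inequality by unpacking both multinomial coefficients and the definition of $M_{2q}^{2q}$. Note first that the constraint $k_1+\cdots+k_n=q$ (hence $2k_1+\cdots+2k_n=2q$) is implicit in the multinomial notation. Writing out the two multinomial coefficients gives
\[
\binom{2q}{2k_1,\dots,2k_n} = \frac{(2q)!}{\prod_{i=1}^n (2k_i)!}, \qquad \binom{q}{k_1,\dots,k_n}=\frac{q!}{\prod_{i=1}^n k_i!},
\]
while $M_{2q}^{2q}=\frac{(2q)!}{2^{q}q!}$. Substituting these into the inequality and canceling the common factor $(2q)!$, the desired bound is equivalent to
\[
\prod_{i=1}^n (2k_i)! \;\geq\; 2^{q}\prod_{i=1}^n k_i!.
\]
Since $\sum_i k_i = q$, we have $2^q=\prod_{i=1}^n 2^{k_i}$, so it suffices to verify the stronger per-factor inequality $(2k_i)! \geq 2^{k_i} (k_i!)$ for every $i$.

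The key step I would carry out is the standard factorial identity
\[
(2k)! \;=\; \bigl[(2k)(2k-2)\cdots 2\bigr]\cdot \bigl[(2k-1)(2k-3)\cdots 1\bigr] \;=\; 2^{k}\,k!\,\cdot (2k-1)!!,
\]
obtained by separating the even and odd factors of $(2k)!$. Since $(2k-1)!!\geq 1$ for every $k\geq 0$ (with equality at $k=0,1$), each per-factor inequality $(2k_i)!\geq 2^{k_i}(k_i!)$ holds. Taking the product over $i\in[n]$ yields $\prod_i (2k_i)! \geq 2^{q}\prod_i k_i!$, which as noted above is equivalent to the claimed bound.

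This is essentially a calculation; the only thing to track carefully is the implicit constraint $\sum_i k_i=q$, which lets us convert the global factor $2^q$ coming from $M_{2q}^{2q}$ into a factorwise $2^{k_i}$. There is no real obstacle: once the inequality is reduced to $(2k)!\geq 2^{k}k!$, the factorial splitting identity $(2k)!=2^{k}k!\,(2k-1)!!$ makes the result immediate, and this is the same inequality that underlies the appearance of $M_{2q}$ in the classical Rademacher moment computations of Tomczak-Jaegermann and Zygmund cited in the lemma.
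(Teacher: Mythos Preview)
Your proof is correct. The reduction to $\prod_i (2k_i)! \geq 2^{q}\prod_i k_i!$ via cancellation of $(2q)!$ is valid, and the per-factor inequality $(2k)!\geq 2^{k}k!$ follows immediately from the splitting $(2k)!=2^{k}k!\,(2k-1)!!$ with $(2k-1)!!\geq 1$.

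The paper does not actually supply a proof of this lemma: it is stated as a tool quoted from \cite{tj74,zyg02} and used as a black box in the bound on $\E\big[\|X\|_{h,2q}^{2q}\big]$. Your argument is a clean, self-contained verification that the cited inequality holds, and it is essentially the same elementary factorial computation that underlies the original references. There is nothing missing; if anything, you could note that the hypothesis $q\geq 2$ is not needed for your argument (it works for all integers $q\geq 0$), so the restriction in the statement is harmless.
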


Using elementary calculations, we can upper bound $M_{2q}$.
\begin{fact}\label{fac:m_2q}
For any $q \geq 1$, we have
\begin{align*}
   \Big( \frac{ (2q)! }{2^q q! } \Big)^{1/(2q)} \leq \sqrt{2q - 1}.
\end{align*}
\end{fact}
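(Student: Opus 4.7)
The plan is to reduce the inequality to an elementary product estimate by rewriting $\frac{(2q)!}{2^q q!}$ as a double factorial and then bounding each factor by $2q-1$.

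First I would split $(2q)!$ into its even and odd factors:
\begin{align*}
    (2q)! = \big[2q \cdot (2q-2) \cdots 4 \cdot 2\big] \cdot \big[(2q-1)(2q-3)\cdots 3 \cdot 1\big] = 2^q\, q! \cdot (2q-1)!!,
\end{align*}
where the even factors collectively contribute $2^q\, q!$ and the odd factors form the double factorial $(2q-1)!!$. This identity is the key algebraic observation, and it immediately yields
\begin{align*}
    \frac{(2q)!}{2^q\, q!} = (2q-1)!! = \prod_{j=1}^{q} (2j-1).
\end{align*}

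Next, since each of the $q$ factors $2j-1$ with $1 \le j \le q$ satisfies $2j-1 \le 2q-1$, I would bound the product trivially by $(2q-1)^q$. Taking $2q$-th roots then gives
\begin{align*}
    \Big(\frac{(2q)!}{2^q\, q!}\Big)^{1/(2q)} \le \big((2q-1)^q\big)^{1/(2q)} = \sqrt{2q-1},
\end{align*}
which is exactly the claimed inequality. There is no real obstacle here: the only step that requires noticing anything is the decomposition of $(2q)!$ into its even and odd parts, and after that it is pure arithmetic. I would present the proof as two displayed equations with a short sentence between them.
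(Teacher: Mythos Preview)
Your proof is correct and is in fact cleaner than the paper's. You exploit the exact identity $(2q)!/(2^q q!) = (2q-1)!!$ and then bound each of the $q$ odd factors by $2q-1$; nothing more is needed.

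The paper instead appeals to Stirling's formula, bounding $(2q)!$ from above and $q!$ from below to obtain $\big(e^{1-q}/\sqrt{\pi}\big)\cdot 2^q q^q$, and then checks that the $2q$-th root of this quantity is at most $\sqrt{2q-1}$. Your double-factorial argument is strictly more elementary: it avoids the asymptotic approximation (and the implicit constants in Stirling) and gives the inequality as an exact combinatorial consequence. The Stirling route could in principle extend to non-integer $q$ via the Gamma function, but that generality is never used in the paper, so your approach loses nothing and gains transparency.
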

\begin{proof}
We have
\begin{align*}
    \left( \frac{ (2q)! }{2^q q! } \right)^{1/(2q)} \leq &~ \left(\frac{e\cdot (2q)^{2q}\cdot \sqrt{2q}\cdot e^{-2q}}{2^q\cdot \sqrt{2\pi} \cdot q^{q} \cdot \sqrt{q}\cdot e^{-q}}\right)^{1/(2q)}\\
    = &~ \left(\frac{e^{1-q}}{\sqrt{\pi}}\cdot 2^q\cdot q^{q}\right)^{1/(2q)}\\
    \leq &~ \sqrt{2q-1},
\end{align*}
where the first step follows from Stirling's formula, and the last step follows from $q\geq 1$.
\end{proof}

\subsection{Proof of the Chernoff bound for hyperbolic polynomials}\label{sec:hyper_proof1}


The goal of this section is to prove Theorem~\ref{thm:main}.
\begin{theorem}[Chernoff bound for hyperbolic polynomial]\label{thm:main}
Let $h$ be an $m$-variable, degree-$s$ hyperbolic polynomial with respect to $e$. Given $x_1, x_2, \cdots, x_n \in \R^m$ such that $\rank(x_i)\leq s$ for all $i\in [n]$ and for some $0<s\leq d$. Let $\sigma = ( \sum_{i=1}^n \|x_i\|_h^2 )^{1/2}$. Then,
\begin{align*}
    \E_{ r \sim \{ \pm 1 \}^n }\left[\left\|\sum_{i=1}^n r_i x_i\right\|_h\right] \leq \min\{2\sqrt{\log s }, 1\}\cdot \sigma.
\end{align*}
Furthermore, there exist two constants $C_1,C_2>0$ such that for every $t>0$,
\begin{align*}
    \Pr_{ r \sim \{\pm 1\}^n } \left[ \left\| \sum_{i=1}^n r_i x_i \right\|_h > t \right]\leq C_1\exp\left(-\frac{C_2t^2}{\sigma^2 \log (s + 1)}\right).
\end{align*}
\end{theorem}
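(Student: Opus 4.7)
The plan is to reduce the hyperbolic problem to a matrix problem via the Helton--Vinnikov/Gurvits translation (Corollary~\ref{cor:hv_2_vars}) applied one vector at a time, then bound the $2q$-th hyperbolic moment and optimize in $q$. More precisely, by Fact~\ref{fac:norm_h_bound_by_norm_h_q} and Lyapunov's inequality (Lemma~\ref{lem:lyapunov}), for $X := \sum_{i=1}^n r_i x_i$ and any integer $q\geq 1$,
\begin{align*}
    \E_{r\sim \{\pm 1\}^n}[\|X\|_h] \;\leq\; \bigl(\E_{r\sim \{\pm 1\}^n}[\|X\|_{h,2q}^{2q}]\bigr)^{1/(2q)}.
\end{align*}
I would then peel off $r_1$ by conditioning on $r_2,\dots,r_n$: applying Corollary~\ref{cor:hv_2_vars} to the pair of vectors $(x_1, X_2)$ with $X_2 := \sum_{i\geq 2} r_i x_i$ produces symmetric $A_1, B_1$ with $\lambda(r_1 x_1 + X_2) = \lambda(r_1 A_1 + B_1)$ for all signs $r_1$, and $\sigma_1(A_1)=\|x_1\|_h$ by Fact~\ref{fac:norm_h_matrix}. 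This turns the inner expectation into $\E_{r_1}[\tr(r_1 A_1 + B_1)^{2q}]$, a matrix moment that I can expand multinomially, kill the odd terms in $r_1$, and then bound each surviving trace using the general Horn inequality (Lemma~\ref{lem:horn}) together with $\sigma_j(A_1)\leq \|x_1\|_h$. This yields the recursive estimate
\begin{align*}
    \E[\|X\|_{h,2q}^{2q}] \;\leq\; \sum_{k_1=0}^{q} \binom{2q}{2k_1}\|x_1\|_h^{2k_1}\cdot \E\bigl[\|X_2\|_{h,2q-2k_1}^{2q-2k_1}\bigr].
\end{align*}

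Iterating this $n-1$ times reduces the problem to a single-vector base case $\E_{r_n}[\|r_n x_n\|_{h,2k_n}^{2k_n}] = \|\lambda(x_n)\|_{2k_n}^{2k_n} \leq \rank(x_n)\cdot \|x_n\|_h^{2k_n} \leq s\cdot \|x_n\|_h^{2k_n}$, where the rank bound is exactly where the hypothesis $\rank(x_i)\leq s$ enters. The multinomial-vs-binomial comparison of Lemma~\ref{lem:zyg02_tj74} then collapses the nested sum into $\bigl(\sum_i \|x_i\|_h^2\bigr)^q$ up to the factor $M_{2q}^{2q}$, and Fact~\ref{fac:m_2q} bounds $M_{2q}\leq \sqrt{2q-1}$. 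Altogether,
\begin{align*}
    \bigl(\E[\|X\|_{h,2q}^{2q}]\bigr)^{1/(2q)} \;\leq\; \sqrt{2q-1}\cdot s^{1/(2q)}\cdot \sigma.
\end{align*}
Choosing $q = \lceil \log(s+1)\rceil$ makes $s^{1/(2q)} = O(1)$ and produces the expectation bound $\E[\|X\|_h]\leq 2\sqrt{\log s}\cdot \sigma$ (absorbing lower-order constants).

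For the tail bound, I would combine the expectation bound with the Ledoux--Talagrand concentration on normed linear spaces specialized to $\|\cdot\|_h$ (Corollary~\ref{cor:hyperbolic_prob}):
\begin{align*}
    \Pr_{r\sim \{\pm 1\}^n}[\|X\|_h > t] \leq 2\exp\bigl(-t^2/(32\,\E[\|X\|_h^2])\bigr).
\end{align*}
Claim~\ref{clm:hyperbolic_2_1_norm}, which is just the Khinchin--Kahane inequality (Theorem~\ref{thm:khinchin_kahane}) applied to $\|\cdot\|_h$, gives $\E[\|X\|_h^2]\leq 2\,\E[\|X\|_h]^2 \leq 8\log(s+1)\cdot \sigma^2$, and substituting yields the stated probability bound with absolute constants $C_1,C_2>0$. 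The main subtlety I anticipate is bookkeeping in the iterative peeling: each step changes the exponent $2q\mapsto 2q-2k_1$ but also leaves us with a new pair $(x_2, X_3)$ for which a \emph{fresh} pair of matrices $A_2,B_2$ must be produced by Corollary~\ref{cor:hv_2_vars}; the crucial point to verify is that the bound at each stage only uses $\sigma_1(A_i)=\|x_i\|_h$ and the singular-value tail inequality, neither of which depends on the previous matrix representation, so the recursion composes cleanly. Verifying that $\|\cdot\|_h$ is indeed a norm (needed to invoke Theorem~\ref{thm:banach_concentration} and Khinchin--Kahane) uses Theorem~\ref{thm:hyperbolic_norm} under the implicit regularity assumption on the hyperbolic cone.
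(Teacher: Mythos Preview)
Your proposal is correct and follows essentially the same route as the paper: bound $\E[\|X\|_h]$ by the hyperbolic $2q$-moment via Fact~\ref{fac:norm_h_bound_by_norm_h_q} and Lyapunov, then prove Lemma~\ref{lem:p_norm_upper_bound} by peeling off one $r_i$ at a time using Corollary~\ref{cor:hv_2_vars}, Horn's inequality, the multinomial bound of Lemma~\ref{lem:zyg02_tj74}, and the rank-$s$ base case, finally optimizing $q\approx \log s$; the tail bound then comes from Corollary~\ref{cor:hyperbolic_prob} combined with Claim~\ref{clm:hyperbolic_2_1_norm}. Even the subtlety you flag---that a fresh pair $(A_i,B_i)$ is produced at each stage and only $\sigma_1(A_i)=\|x_i\|_h$ is used---matches the paper's argument.
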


\begin{proof}
We first upper bound $\E_{ r \sim \{ \pm 1 \}^n }[\|\sum_{i=1}^n r_i x_i\|_h]$ by
\begin{align}\label{eq:bound_expect_h_norm}
    \E_{ r \sim \{ \pm 1 \}^n }\left[\left\|\sum_{i=1}^n r_i x_i\right\|_h\right] \leq &~  \E_{ r \sim \{ \pm 1 \}^n }\left[\left\|\sum_{i=1}^n r_i x_i\right\|_{h,2q}\right]\notag\\
    \leq & ~ \left( \E_{ r \sim \{ \pm 1 \}^n } \left[\left\|\sum_{i=1}^n r_i x_i\right\|_{h,2q}^{2q}\right] \right)^{1/(2q)}\\
    \leq & ~ \sqrt{2q-1} \cdot s^{1/(2q)} \cdot \left( \sum_{i=1}^n\|x_i\|_{h}^2 \right)^{1/2},\notag
\end{align}
where the first step follows from $\|x\|_{h}\leq \|x\|_{h,2q}$ when $q\geq 1$ (Fact~\ref{fac:norm_h_bound_by_norm_h_q}), the second step follows from the Lyapunov inequality (Lemma~\ref{lem:lyapunov}), and the third step follows from Lemma~\ref{lem:p_norm_upper_bound}.

Let's first assume $s>1$. By taking $q=\log s$, we have 
\begin{align*}
    \E_{ r \sim \{ \pm 1 \}^n }\left[\left\|\sum_{i=1}^n r_i x_i\right\|_h\right] 
    \leq & ~ \sqrt{ 4 (\log s ) - 2 }\cdot \left( \sum_{i=1}^n \|x_i\|_h^2 \right)^{1/2} \\
    = & ~ \sqrt{ 4 (\log s ) - 2 }\cdot \sigma \\ 
    \leq & ~ 2 \sqrt{\log s} \cdot \sigma.
\end{align*}
where the second step follows from $ \sigma := \left( \sum_{i=1}^n \|x_i\|_h^2 \right)^{1/2}$. When $s=1$, by taking $q=1$, we get that 
\begin{align*}
    \E_{ r \sim \{ \pm 1 \}^n }\left[\left\|\sum_{i=1}^n r_i x_i\right\|_h\right] 
    \leq ~ \sigma.
\end{align*}

By Claim~\ref{clm:hyperbolic_2_1_norm}, 
\begin{align*}
    \E_{ r \sim \{ \pm 1 \}^n }\left[\left\|\sum_{i=1}^n r_i x_i\right\|_h^2\right] 
    \leq 2\left( \E_{ r \sim \{ \pm 1 \}^n } \left[\left\|\sum_{i=1}^n r_i x_i\right\|_h\right] \right)^2 \leq \min\{8  \log s ,2\} \cdot \sigma^2.
\end{align*}
Then, by Corollary~\ref{cor:hyperbolic_prob},
\begin{align*}
    \Pr_{ r \sim \{ \pm 1 \}^n } \left[ \left\| \sum_{i=1}^n r_i x_i\right\|_h > t \right] \leq &~ 2 \exp \left(-\frac{t^2}{32 \E_{ r \sim \{ \pm 1 \}^n }[ \| \sum_{i=1}^n r_i x_i \|_h^2 ] }\right)\\
    \leq &~ 2\exp\left(-\frac{t^2}{64\min\{4  \log s ,1\} \cdot \sigma^2}\right).
\end{align*}
Thus, we complete the proof.
\end{proof}

\subsection{Expected hyperbolic-\texorpdfstring{$2q$}{2q} norm bound}\label{sec:exp_hyper_2p_norm}
The goal of this section is to prove Lemma~\ref{lem:p_norm_upper_bound}.
\begin{lemma}[Expected hyperbolic-$2q$ norm of Rademacher sum]\label{lem:p_norm_upper_bound}
Let $h$ be an $m$-variate, degree-$d$ hyperbolic polynomial. Given $n$ vectors $x_1, \cdots, x_n \in \R^m$ such that $\rank(x_i)\leq s$ for all $i\in [n]$ and for some $0<s\leq d$. For any $q\geq 1$, we have
\begin{align*}
   \left( \E_{r \sim \{\pm 1\}^n }\left[\left\|\sum_{i=1}^n r_i x_i\right\|_{h,2q}^{2q}\right] \right)^{1/(2q)}\leq \sqrt{2q-1} \cdot s^{1/(2q)}\cdot \left( \sum_{i=1}^n \|x_i\|_{h}^2 \right)^{1/2}.
\end{align*}
\end{lemma}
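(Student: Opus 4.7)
The plan is to prove by induction on $n$ the slightly sharper statement
\[
\E_{r}\Big[\Big\|\sum_{i=1}^n r_i x_i\Big\|_{h,2q}^{2q}\Big] \;\le\; s\cdot M_{2q}^{2q}\cdot\Big(\sum_{i=1}^n \|x_i\|_h^2\Big)^q,
\]
where $M_{2q}^{2q}=(2q)!/(2^q q!)$ as in Lemma~\ref{lem:zyg02_tj74}; the target inequality then follows from Fact~\ref{fac:m_2q}. The base case $n=1$ reduces to $\|x_1\|_{h,2q}^{2q}\le \rank(x_1)\cdot\|x_1\|_h^{2q}\le s\cdot\|x_1\|_h^{2q}$, together with $M_{2q}^{2q}\ge 1$ for $q\ge 1$.

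For the inductive step I would write $X=r_1 x_1+X_2$ with $X_2=\sum_{i\ge 2}r_i x_i$ and condition on $X_2$. By Corollary~\ref{cor:hv_2_vars} there exist symmetric $A,B\in\R^{d\times d}$ with $\lambda(ax_1+bX_2)=\lambda(aA+bB)$ for every $a,b\in\R$; in particular $\|A\|=\|x_1\|_h$ and $\|B\|_p=\|X_2\|_{h,p}$ for all $p\ge 1$, and $\|r_1 x_1+X_2\|_{h,2q}^{2q}=\tr((r_1A+B)^{2q})$. Expanding this trace as a sum over length-$2q$ words in the alphabet $\{A,B\}$ and averaging over $r_1\in\{\pm 1\}$ kills every word with an odd number of $A$'s, leaving exactly $\binom{2q}{2k}$ words containing $2k$ copies of $A$ for each $k\in\{0,\dots,q\}$.

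The crux is bounding $|\tr(w)|\le\sum_j\sigma_j(w)$ for such a word $w=M_1\cdots M_{2q}$ by combining Horn's inequality (Lemma~\ref{lem:horn}), which gives $\sum_j\sigma_j(w)\le\sum_j\prod_i\sigma_j(M_i)$, with H\"older for sequences using the \emph{asymmetric} exponents $p_i=\infty$ at the $A$-slots and $p_i=2q-2k$ at the $B$-slots (one checks $\sum_i 1/p_i=1$). This yields $|\tr(w)|\le \|x_1\|_h^{2k}\|X_2\|_{h,2q-2k}^{2q-2k}$ for $0<k<q$; the endpoint $k=0$ contributes $\|X_2\|_{h,2q}^{2q}$ and the endpoint $k=q$ contributes $\tr(A^{2q})=\|x_1\|_{h,2q}^{2q}\le s\|x_1\|_h^{2q}$, which is the \emph{only} place where the rank factor $s$ enters. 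Taking expectation over $r_2,\dots,r_n$ and applying the induction hypothesis to $\E[\|X_2\|_{h,2q-2k}^{2q-2k}]$ for each $k\in\{0,\dots,q-1\}$, the inductive claim matches up term-by-term with the expansion of $s\cdot M_{2q}^{2q}(\|x_1\|_h^2+\sum_{i\ge 2}\|x_i\|_h^2)^q$ provided
\[
\binom{2q}{2k}\,M_{2q-2k}^{2q-2k} \;\le\; M_{2q}^{2q}\binom{q}{k} \qquad\text{for }1\le k\le q-1,
\]
which after unwinding the definitions is equivalent to $(2k)!\ge 2^k k!$ and follows from $(2k)!/k!=(k+1)(k+2)\cdots(2k)\ge 2^k$.

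The main obstacle will be picking the H\"older exponents correctly. The symmetric choice $p_A=p_B=2q$ used in the matrix proof of Theorem~\ref{thm:matrix_p_norm} keeps $\|X_2\|_{h,2q}$ at every step and cannot absorb the factor $M_{2q-2k}^{2q-2k}$ delivered by the induction hypothesis; the asymmetric choice $p_B=2q-2k$ is what drops the Schatten exponent on $X_2$ from $2q$ down to $2q-2k$ so that Lemma~\ref{lem:zyg02_tj74}, in the equivalent form above, makes the multinomial coefficients telescope and the rank factor $s$ appears only once.
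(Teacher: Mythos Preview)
Your proposal is correct and follows essentially the same route as the paper: reduce to two matrices via the Helton--Vinnikov corollary, expand $\tr((r_1A+B)^{2q})$ into words, average over $r_1$ to keep only even words, bound each word by Horn's inequality together with $\sigma_j(A)\le\sigma_1(A)=\|x_1\|_h$ (your asymmetric H\"older is exactly this), and then control the coefficients by the double-factorial inequality underlying Lemma~\ref{lem:zyg02_tj74}. The only difference is organizational---you induct on $n$ and apply the binomial identity $\binom{2q}{2k}M_{2q-2k}^{2q-2k}\le M_{2q}^{2q}\binom{q}{k}$ at each step, whereas the paper iterates all the way down to a multinomial expansion and invokes Lemma~\ref{lem:zyg02_tj74} once at the end; your version has the minor advantage of making explicit that the factor $s$ really only enters through the endpoint $k=q$ (the paper's running sum $\sum_{j=1}^s$ silently handles this).
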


\begin{proof}

The main idea is to consider the random variables $r_1, r_2, \cdots, r_n$ one at a time. By the conditional expectation, we have
\begin{align*}
    \E_{ r \sim \{ \pm 1 \}^n } \left[\left\|\sum_{i=1}^n r_i x_i\right\|_{h,2q}^{2q}\right] 
    = & ~ \E_{ r_2, \cdots, r_n \sim \{ \pm 1 \} } \left[ \E_{ r_1 \sim \{ \pm 1\} } \left[\left\|\sum_{i=1}^n r_i x_i\right\|_{h,2q}^{2q}\right] \right] \\
    = & ~ \E_{r_2,\dots,r_n \sim \{\pm 1\} } \left[ \E_{r_1 \sim \{ \pm 1\} } \left[ \sum_{j=1}^d \lambda_j\left(r_1 x_1 + \sum_{i=2}^n r_ix_i\right)^{2q} \right] \right].
\end{align*}
where the last step follows from the definition of $\|\cdot\|_{h,2q}$ norm.

To apply Corollary~\ref{cor:hv_2_vars}, let $x=x_1,y=\sum_{i=2}^n r_i x_i$. Then, there exists two symmetric matrices $A_1,B_1\in \R^{d\times d}$ such that 
\begin{align}\label{eq:hyperbolic_matrix_eigen}
    \lambda\left( r_1 x_1 + \sum_{i=2}^n r_ix_i \right) = \lambda(r_1 A_1+B_1),
\end{align}
where $\lambda$ is the vector of eigenvalues ordered from large to small. Then, we have
\begin{align*}
    \lambda(x_1) = \lambda(A_1),\quad \lambda\left(\sum_{i=2}^n r_i x_i\right) = \lambda(B_1).
\end{align*}

Hence, by the definition of Schatten-$p$ norm,
\begin{align}\label{eq:eigen_trace}
    \sum_{j=1}^d \left( \lambda_j \Big(r_1 x_1 + \sum_{i=2}^n r_ix_i \Big) \right)^{2q} =&~  \|r_1 A_1 +B_1\|_{2q}^{2q}\notag\\
    = &~ \tr\left[(r_1A_1+B_1)^{2q}\right]\notag\\
    = &~ \sum_{ \beta \in \{0,1\}^{2q} } \tr\left[\prod_{i=1}^{2q}A_1^{\beta_i}B_1^{1-\beta_i}\right]\cdot r_1^{\sum_{i=1}^{2q} \beta_i}.
\end{align}
where the first step follows from Eq.~\eqref{eq:hyperbolic_matrix_eigen} and the definition of matrix Schatten $p$-norm, the second step follows from $\|A\|_{2q}^{2q}=\tr[A^{2q}]$ for symmetric matrix $A$ and $q\geq 1$, and the last step follows from the linearity of trace.

We define a set which will be used later.
\begin{align*}
     \mathcal{B}_{\mathrm{even}} :=\left\{\beta\in \{0,1\}^{2q}: \sum_{i=1}^{2q} \beta_i \text{~is even}\right\}.
\end{align*}

By taking expectation for $r_1$, we have
\begin{align*}
    \E_{ r_1 \sim \{ \pm 1 \} }\left[ \sum_{j=1}^d \lambda_j\left(r_1 x_1 + \sum_{i=2}^n r_ix_i\right)^{2q} \right] 
    = &~ \sum_{\beta\in \{0,1\}^{2q}} \tr\left[\prod_{i=1}^{2q}A_1^{\beta_i}B_1^{1-\beta_i}\right]\cdot \E_{r_1 \sim \{\pm 1\} }\left[r_1^{\sum_{i=1}^{2q} \beta_i}\right]\\
    = &~ \sum_{\beta\in \mathcal{B}_{\mathrm{even}}} \tr\left[\prod_{i=1}^{2q}A_1^{\beta_i}B_1^{1-\beta_i}\right]
\end{align*}
where the first step follows from Eq.~\eqref{eq:eigen_trace} and the linearity of expectation, and the last step follows from
\begin{align*}
    \E_{r_1 \sim \{\pm 1\} }\big[r_1^{k}\big] = \begin{cases}
    0 & \text{if }k\text{~is odd},\\
    1 & \text{if }k\text{~is even}.
    \end{cases}
\end{align*}

For each $\beta\in \mathcal{B}_{\mathrm{even}}$, we have
\begin{align*}
    \tr\left[\prod_{i=1}^{2q}A_1^{\beta_i}B_1^{1-\beta_i}\right] \leq \sum_{j=1}^s \sigma_j\left(\prod_{i=1}^{2q}A_1^{\beta_i}B_1^{1-\beta_i}\right)\leq \sum_{j=1}^s \prod_{i=1}^{2q}\sigma_j\left(A_1^{\beta_i}B_1^{1-\beta_i}\right),
\end{align*}
where $\sigma_j(A)$ is the $j$-th singular value of $A$ and the first step follows from $\tr[A]\leq \sum_{i=1}^{\rank(A)}\sigma_j(A)$
for any real square matrix $A$, and the second step follows from general Horn inequality (Lemma~\ref{lem:horn}).

Then, it follows that
\begin{align}\label{eq:trace_to_singular}
    \sum_{\beta\in \mathcal{B}_{\mathrm{even}}} \tr\left[\prod_{i=1}^{2q}A_1^{\beta_i}B_1^{1-\beta_i}\right] \leq &~ 
    \sum_{\beta\in \mathcal{B}_{\mathrm{even}}}\sum_{j=1}^s \sigma_j(A_1)^{\sum_{i=1}^{2q}\beta_i} \sigma_j(B_1)^{2q-\sum_{i=1}^{2q}\beta_i}\notag\\
    = &~ \sum_{k=0}^q \binom{2q}{2k} \sum_{j=1}^s \sigma_j(A_1)^{2k}\sigma_j(B_1)^{2q-2k},
\end{align}
where the first step follows from $\rank(A)=\rank(x_1)\leq s$.
Hence,
\begin{align}\label{eq:iter}
    \E_{r_1,\dots,r_n \sim \{ \pm 1\}}\left[\left\|\sum_{i=1}^n r_i x_i\right\|_{h,2q}^{2q}\right] \leq &~ \sum_{k_1=0}^q \binom{2q}{2k_1}\E_{r_2,\dots,r_n}\left[ \sum_{j=1}^s \sigma_j(A_1)^{2k_1}\sigma_j(B_1)^{2q-2k_1} \right]\notag\\
    \leq &~ \sum_{k_1=0}^q \binom{2q}{2k_1}\E_{r_2,\dots,r_n}\left[ \sigma_1(A_1)^{2k_1} \sum_{j=1}^s \sigma_j(B_1)^{2q-2k_1} \right]\notag\\
    = &~ \sum_{k_1=0}^q \binom{2q}{2k_1} \E_{r_2,\dots,r_n}\left[\|x_1\|_h^{2k_1}\sum_{j=1}^s \lambda_j(B_1)^{2q-2k_1} \right]\notag\\
    = &~ \sum_{k_1=0}^q \binom{2q}{2k_1}\|x_1\|_h^{2k_1} \E_{r_2,\dots,r_n}\left[\sum_{j=1}^s \lambda_j(B_1)^{2q-2k_1} \right]\notag \\
    = &~ \sum_{k_1=0}^q \binom{2q}{2k_1}\|x_1\|_h^{2k_1} \E_{r_2,\dots,r_n}\left[\left\|\sum_{i=2}^n r_i x_i \right\|_{h,2q-2k_1}^{2q-2k_1} \right],
\end{align}
where the second step follows from $\sigma_1(A)\geq \cdots \geq \sigma_s(A)$, the third step follows from for $\sum_{i=1}^s \sigma_i(A)^k=\sum_{i=1}^s\lambda_i(A)^k$ for even $k$, the forth step follows from Fact~\ref{fac:norm_h_matrix}, and the last step follows from definition of $\| \cdot \|_{h,q}$.

Now, we can iterate this process for $\E_{r_2,\dots,r_n}\left[\left\|\sum_{i=2}^n r_i x_i \right\|_{h,2q-2k_1}^{2q-2k_1} \right]$. Consider $r_2 x_2 + \sum_{i=3}^{n} r_i x_i$. By Corollary~\ref{cor:hv_2_vars}, there exists two symmetric matrices $A_2,B_2\in \R^{d\times d}$ such that
\begin{align*}
    \lambda\left(r_2 x_2 + \sum_{i=3}^{n} r_i x_i\right) = \lambda(r_2 A + B)
\end{align*}
for all $r_2\in \{-1, 1\}$. By the conditional expectation again, we can get that
\begin{align*}
    \E_{r_1,\dots,r_n}\left[\left\|\sum_{i=1}^n r_i x_i\right\|_{h,2q}^{2q}\right] \leq &~  \sum_{k_1=0}^q \binom{2q}{2k_1}\|x_1\|_h^{2k_1} \E_{r_2,\dots,r_n}\left[\left\|\sum_{i=2}^n r_i x_i \right\|_{h,2q-2k_1}^{2q-2k_1} \right] \\
    \leq &~ \sum_{k_1=0}^q \binom{2q}{2k_1}\|x_1\|_h^{2k_1} \sum_{k_2=0}^{2q-2k_1} \binom{2q-2k_1}{2k_2} \|x_2\|_h^{2k_2}\E_{r_3,\dots,r_n}\left[\left\|\sum_{i=3}^n r_i x_i \right\|_{h,2k_3}^{2k_3} \right],
\end{align*}
where $k_3=q-k_1-k_2$ and the second step follows from applying Eq.~\eqref{eq:iter} for $\E_{r_2,\dots,r_n}\left[\left\|\sum_{i=2}^n r_i x_i \right\|_{h,2q-2k_1}^{2q-2k_1} \right]$.

If we iterate $n-1$ times, we finally get
\begin{align}\label{eq:expansion_2q_norm}
    \E\left[\left\|\sum_{i=1}^n r_i x_i\right\|_{h,2q}^{2q}\right] 
    \leq & ~ \sum_{\substack{k_1,\dots,k_n\geq 0\\k_1+\cdots+k_n=q}} \prod_{i=1}^{n-1} \binom{2q - \sum_{j=1}^{i-1} 2k_j}{2k_{i}}  \|x_i\|_h^{2k_i} \cdot \E_{r_n}\left[ \|r_n x_n\|_{h,2k_n}^{2k_n} \right] \notag \\
    = &~ \sum_{\substack{k_1,\dots,k_n\geq 0\\k_1+\cdots+k_n=q}}\binom{2q}{2k_1,\dots,2k_n} \prod_{i=1}^{n-1} \|x_i\|_h^{2k_i} \cdot \E_{r_n}\left[ \|r_n x_n\|_{h,2k_n}^{2k_n} \right]\notag\\
    = &~ \sum_{\substack{k_1,\dots,k_n\geq 0\\k_1+\cdots+k_n=q}}\binom{2q}{2k_1,\dots,2k_n} \prod_{i=1}^{n-1} \|x_i\|_h^{2k_i} \cdot \|x_n\|_{h,2k_n}^{2k_n},
\end{align}
where the first step follows from iterating the same rule for $n-1$ times, the second step follows from
\begin{align*}
    \prod_{i=1}^{n-1} \binom{2q - \sum_{j=1}^{i-1}2k_{j}}{2k_{i}} = &~  \binom{2q}{2k_1}\cdot \binom{2q-2k_1}{2k_2}\cdot \binom{2q-2k_1-2k_2}{2k_3}\cdots \binom{2k_{n-1} + 2k_n}{2k_{n-1}}\\
    = &~ \binom{2q}{2k_1}\cdot \binom{2q-2k_1}{2k_2}\cdot \binom{2q-2k_1-2k_2}{2k_3}\cdots \binom{2k_{n-1} + 2k_n}{2k_{n-1}} \cdot \binom{ 2k_n}{2k_{n}}\\
    = &~ \binom{2q}{2k_1,\dots,2k_n}.
\end{align*}

By Lemma~\ref{lem:zyg02_tj74}, we have that
\begin{align*}
    \binom{2q}{2k_1,\dots,2k_n}\leq M_{2q}^{2q} \cdot \binom{q}{k_1,\dots,k_n},
\end{align*}
where $M_{2q}=\left(\frac{(2q)!}{2^q q!}\right)^{1/(2q)}\leq \sqrt{2q-1}$ by Fact~\ref{fac:m_2q}.

Hence,
\begin{align*}
    \E_{ r \sim \{\pm 1\}^n }\left[\left\|\sum_{i=1}^n r_i x_i\right\|_{h,2q}^{2q}\right] \leq &~  M_{2q}^{2q}\sum_{\substack{k_1,\dots,k_n\geq 0\\k_1+\cdots+k_n=q}}\binom{q}{k_1,\dots,k_n} \prod_{i=1}^{n-1} \|x_i\|_h^{2k_i} \cdot \|x_n\|_{h,2k_n}^{2k_n}\\
    \leq &~  M_{2q}^{2q}\sum_{\substack{k_1,\dots,k_n\geq 0\\k_1+\cdots+k_n=q}}\binom{q}{k_1,\dots,k_n} \prod_{i=1}^{n-1} \|x_i\|_h^{2k_i} \cdot s\cdot \|x_n\|_{h}^{2k_n}\\
    = & ~ M_{2q}^{2q} \cdot s \cdot \sum_{\substack{k_1,\dots,k_n\geq 0\\k_1+\cdots+k_n=q}}\binom{q}{k_1,\dots,k_n} \prod_{i=1}^{n} \|x_i\|_h^{2k_i}  \\
    = &~ M_{2q}^{2q}\cdot s \cdot \left(\sum_{i=1}^{n} \|x_i\|_h^2 \right)^q,
\end{align*}
where the second step follows from $\|x_n\|_{h,2k_n} \leq s^{1/(2k_n)}\cdot \|x_n\|_h$ for rank-$s$ vector (see Fact~\ref{fac:norm_h_q_bound_by_norm_h}), the third step follows from re-organizing the terms, and the last step follows from expanding $(\sum_{i=1}^{n} \|x_i\|_h^2 )^q$.

Therefore,
\begin{align*}
    \left(\E_{r \sim \{\pm 1\}^n}\left[\left\|\sum_{i=1}^n r_i x_i\right\|_{h,2q}^{2q}\right]\right)^{1/(2q)} \leq \sqrt{2q-1}\cdot s^{1/(2q)}\cdot \left(\sum_{i=1}^{n} \|x_i\|_h^2\right)^{1/2},
\end{align*}
which completes the proof of Lemma~\ref{lem:p_norm_upper_bound}.

\begin{remark}
This upper bound depends essentially on the the maximum hyperbolic rank of all the vectors $x_1,\dots,x_n$, instead of just the last one. It follows since Eq.~\eqref{eq:iter} can be expanded as:
\begin{align*}
    \E_{r_1,\dots,r_n \sim \{ \pm 1\}}\left[\left\|\sum_{i=1}^n r_i x_i\right\|_{h,2q}^{2q}\right] \leq &~ \E_{r_2,\dots,r_n\sim \{\pm 1\}}\left[\left\|\sum_{i=2}^n r_i x_i\right\|_{h,2q}^{2q}\right] + \|x_1\|_{h,2q}^{2q}\\
    &+\sum_{k_1=1}^{q-1} \binom{2q}{2k_1} \|x_1\|_{h}^{2k_1}\cdot \E_{r_2,\dots,r_n\sim \{\pm 1\}}\left[\left\|\sum_{i=2}^n r_i x_i\right\|_{h,2q-k_1}^{2q-k_1}\right]. 
\end{align*}
We can see that the first term depends on the rank of $x_2,\dots,x_n$, the second term depends on the $\rank(x_1)$. Hence, the whole summation cannot be uniformly bounded by the rank of the last vector $x_n$. Hence, adding a rank-1 dummy vector cannot improve the bound.
\end{remark}

\end{proof}

\section{Hyperbolic Chernoff bound for hyperbolic cone vectors}\label{sec:hyper_proof2}
The goal of this section is to prove the following theorem, which generalizes the matrix Chernoff bound for positive semi-definite matrices to the hyperbolic version with respect to random vectors in the hyperbolic cone.
\begin{theorem}\label{thm:chernoff_cone}
Let $h$ be an $m$-variate, degree-$d$ hyperbolic polynomial with hyperbolic direction $e\in \R^m$. Let $\Lambda_+$ denote the hyperbolic cone of $h$ with respect to $e$. Suppose $\mathsf{x}_1,\dots,\mathsf{x}_n$ are $n$ independent random vectors with supports in $\Lambda_+$ such that $\lambda_{\max}(\mathsf{x}_i)\leq R$ for all $i\in [n]$.

Define the mean of minimum and maximum eigenvalues as follows:
\begin{align*}
    \mu_{\min}:=\sum_{i=1}^n \E[\lambda_{\min}(\mathsf{x}_i)], ~~\text{and}~~\mu_{\max}:=\sum_{i=1}^n \E[\lambda_{\max}(\mathsf{x}_i)].
\end{align*}

Then, we have
\begin{align*}
    \Pr\left[\lambda_{\max}\left(\sum_{i=1}^n \mathsf{x}_i\right)\geq (1+\delta)\mu_{\max}\right] \leq&~ d\cdot \left(\frac{(1+\delta)^{1+\delta}}{e^{\delta}}\right)^{-\mu_{\max}/R}~~\forall \delta\geq 0,\\
    \Pr\left[\lambda_{\min}\left(\sum_{i=1}^n \mathsf{x}_i\right)\leq (1-\delta)\mu_{\min}\right] \leq&~ d\cdot \left(\frac{(1-\delta)^{1-\delta}}{e^{-\delta}}\right)^{-\mu_{\min}/R}~~\forall \delta\in [0,1].
\end{align*}
\end{theorem}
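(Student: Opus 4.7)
The proof plan follows the sketch already given in the technique overview section, generalizing the standard matrix Chernoff argument of \cite{t12} to the hyperbolic setting. By homogeneity of $h$, I would first reduce to the case $R=1$ by rescaling each $\mathsf{x}_i$ by $1/R$ and observing that all relevant eigenvalues rescale accordingly. The starting point is the Markov-type inequality obtained from the Taylor expansion of the moment generating function of $\lambda_{\max}$: for any $\theta>0$,
\begin{align*}
    \Pr\!\left[\lambda_{\max}\!\Big(\sum_{i=1}^n \mathsf{x}_i\Big)\geq t\right] \leq e^{-\theta t}\cdot \E\!\left[e^{\theta\,\lambda_{\max}(\sum_{i=1}^n \mathsf{x}_i)}\right] \leq e^{-\theta t}\sum_{q\geq 0}\frac{\theta^q}{q!}\,\E\!\left[\Big\|\sum_{i=1}^n \mathsf{x}_i\Big\|_{h,q}^q\right],
\end{align*}
using that $\lambda_{\max}(y)\le \|y\|_{h,q}$ when $y\in\Lambda_+$ (since all eigenvalues are nonnegative).

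The key technical step will be a hyperbolic analogue of the moment bound of Lemma~\ref{lem:p_norm_upper_bound}, tailored to the cone case. Namely, I would prove
\begin{align*}
    \E\!\left[\Big\|\sum_{i=1}^n \mathsf{x}_i\Big\|_{h,q}^q\right] \leq d\cdot \E\!\left[\Big(\sum_{i=1}^n \lambda_{\max}(\mathsf{x}_i)\Big)^q\right]
\end{align*}
by induction on $n$. As in Section~\ref{sec:exp_hyper_2p_norm}, I would condition on $\mathsf{x}_2,\dots,\mathsf{x}_n$, invoke Corollary~\ref{cor:hv_2_vars} to obtain symmetric matrices $A(\mathsf{x}_1), B$ with $\lambda(\mathsf{x}_1+\sum_{i\geq 2}\mathsf{x}_i)=\lambda(A(\mathsf{x}_1)+B)$ and both $A(\mathsf{x}_1),B\succeq 0$ (since all $\mathsf{x}_i\in\Lambda_+$), expand $\tr[(A(\mathsf{x}_1)+B)^q]=\sum_{\beta\in\{0,1\}^q}\tr[\prod A(\mathsf{x}_1)^{\beta_i}B^{1-\beta_i}]$, and apply the generalized Horn inequality (Lemma~\ref{lem:horn}) together with $\lambda_j(A(\mathsf{x}_1))\leq \lambda_{\max}(\mathsf{x}_1)$ to collapse each summand into $\lambda_{\max}(\mathsf{x}_1)^{k}\cdot \sum_j\lambda_j(B)^{q-k}$. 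Unlike the Rademacher case, no odd terms vanish, so the full binomial sum $\sum_{k=0}^q\binom{q}{k}\lambda_{\max}(\mathsf{x}_1)^k\,\|\sum_{i\geq 2}\mathsf{x}_i\|_{h,q-k}^{q-k}$ survives; this is exactly $(\lambda_{\max}(\mathsf{x}_1)+\|\sum_{i\geq 2}\mathsf{x}_i\|_{h,q-k})^q$-type expansion which after induction gives the claimed bound, with the factor $d$ appearing only in the final base case $\|\mathsf{x}_n\|_{h,k}^k\le d\,\lambda_{\max}(\mathsf{x}_n)^k$.

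Substituting the moment bound back into the Taylor expansion and swapping sum and expectation yields
\begin{align*}
    \Pr\!\left[\lambda_{\max}\!\Big(\sum_{i=1}^n \mathsf{x}_i\Big)\geq t\right] \leq d\cdot e^{-\theta t}\,\E\!\left[e^{\theta\sum_{i=1}^n \lambda_{\max}(\mathsf{x}_i)}\right] = d\cdot e^{-\theta t}\prod_{i=1}^n \E\!\left[e^{\theta\lambda_{\max}(\mathsf{x}_i)}\right],
\end{align*}
by independence of the $\mathsf{x}_i$. From here the argument is identical to the scalar positive Chernoff bound: since $0\leq \lambda_{\max}(\mathsf{x}_i)\leq R$, the convexity inequality $e^{\theta u}\leq 1+\tfrac{e^{\theta R}-1}{R}u$ for $u\in[0,R]$ together with $1+y\leq e^y$ gives $\E[e^{\theta\lambda_{\max}(\mathsf{x}_i)}]\leq \exp((e^{\theta R}-1)\E[\lambda_{\max}(\mathsf{x}_i)]/R)$, and summing over $i$ produces $d\exp(-\theta t+(e^{\theta R}-1)\mu_{\max}/R)$. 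Setting $\theta=\tfrac{1}{R}\log(1+\delta)$ and $t=(1+\delta)\mu_{\max}$ yields the stated upper-tail bound.

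For the lower-tail bound on $\lambda_{\min}$, I would apply the substitution trick described in the overview: define $\mathsf{x}'_i := e - \mathsf{x}_i$ (after rescaling so $R\leq 1$), which by Fact~\ref{fac:eigen_linear_trans} satisfies $\lambda_{\max}(\mathsf{x}'_i) = 1 - \lambda_{\min}(\mathsf{x}_i)$ and $\mathsf{x}'_i\in\Lambda_+$, then apply the max-eigenvalue bound just proved to the $\mathsf{x}'_i$ with parameter $1-\delta$ in place of $1+\delta$. The main obstacle I anticipate is the inductive moment bound: establishing that the Horn-inequality expansion truly telescopes to $\bigl(\sum_i \lambda_{\max}(\mathsf{x}_i)\bigr)^q$ without losing more than the single factor of $d$, since (unlike the Rademacher case where only even multinomial terms appear) every term in the expansion is nonnegative and contributes.
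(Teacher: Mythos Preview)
Your proposal is correct and follows essentially the same route as the paper's proof: rescale to $R=1$, use the Laplace transform and Taylor expansion, bound each moment $\E[\|\sum_i\mathsf{x}_i\|_{h,q}^q]$ by peeling off one vector at a time via Corollary~\ref{cor:hv_2_vars} and Horn's inequality to obtain $d\cdot\E[(\sum_i\lambda_{\max}(\mathsf{x}_i))^q]$, then finish with the scalar Chernoff computation and the substitution $\mathsf{x}'_i=e-\mathsf{x}_i$ for the lower tail. Your anticipated obstacle (the full binomial expansion telescoping to the multinomial with only a single factor of $d$ from the terminal step) is exactly how the paper handles it, so there is no gap.
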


\begin{proof}
Without loss of generality, we may assume that $\lambda_{\max}(\mathsf{x}_i)\leq 1$. The general case will follow from scaling.

\paragraph*{Maximum eigenvalue: } By the Laplace transform method, we have
\begin{align}\label{eq:exp_sum}
    \Pr\left[\lambda_{\max}\left(\sum_{i=1}^n \mathsf{x}_i\right)\geq t\right] \leq &~  \inf_{\theta >0}~e^{-\theta t}\cdot \E\left[\exp\left(\theta\lambda_{\max}\left(\sum_{i=1}^n \mathsf{x}_i\right)\right)\right]\notag\\
    = &~ \inf_{\theta >0}~e^{-\theta t}\cdot \E\left[\sum_{q\geq 0}\frac{\theta^q}{q!}\lambda_{\max}\left(\sum_{i=1}^n \mathsf{x}_i\right)^q\right]\notag\\
    \leq &~ \inf_{\theta >0}~e^{-\theta t}\cdot\sum_{q\geq 0}\frac{\theta^q}{q!}\E\left[\sum_{j=1}^d\lambda_{j}\left(\sum_{i=1}^n \mathsf{x}_i\right)^q\right],
\end{align}
where the second step follows from Taylor expansion, and the third step follows from $\mathsf{x}_i\in \Lambda_+$ and each term in the summation are non-negative.

Then, the remaining task is very similar to the proof of Lemma~\ref{lem:p_norm_upper_bound}. We will upper bound the expectation of trace moments as follows: let $\E_i$ denote the expectation over $\mathsf{x}_i$, and $\E_{\geq i}$ denote the expectation over $\mathsf{x}_{i},\dots,\mathsf{x}_{n}$. Then, we have
\begin{align*}
    \mathbb{E}_{\geq 1}\left[\sum_{j=1}^d\lambda_{j}\left(\sum_{i=1}^n \mathsf{x}_i\right)^q\right]= &~ \mathbb{E}_{\geq 2} \mathbb{E}_1\left[\sum_{j=1}^d\lambda_{j}\left(\mathsf{x}_1 + \sum_{i=2}^n \mathsf{x}_i\right)^q\right]\\
    = &~ \mathbb{E}_{\geq 2} \mathbb{E}_1\left[\tr\left[(A_1+B_1)^q\right]\right],
\end{align*}
where $A_1, B_1\in \R^{d\times d}$ are two symmetric matrices given by Corollary~\ref{cor:hv_2_vars} such that $A_1$ depends on the value of $\mathsf{x}_1$ and $B_1$ depends on the values of $\mathsf{x}_2,\dots,\mathsf{x}_n$. Since all eigenvalues of $\mathsf{x}_1,\dots, \mathsf{x}_n$ are non-negative, $A$ and $B$ are positive semi-definite matrices. Then, we have
\begin{align*}
    \mathbb{E}_{\geq 2} \mathbb{E}_1\left[\tr\left[(A_1+B_1)^q\right]\right] = &~ \mathbb{E}_{\geq 2} \mathbb{E}_1\left[\sum_{\beta\in \{0,1\}^{q}}\tr\left[\prod_{k=1}^q A_1^{\beta_i} B_1^{1-\beta_i}\right]\right]\\
    \leq &~ \mathbb{E}_{\geq 2} \mathbb{E}_1\left[\sum_{\beta\in \{0,1\}^{q}}\sum_{j=1}^d \lambda_j(A_1)^{\sum_{i=1}^q \beta_i}\cdot \lambda_j(B_1)^{q - \sum_{i=1}^q \beta_i}\right]\\
    = &~ \mathbb{E}_{\geq 2} \mathbb{E}_1\left[\sum_{k_1=0}^q\binom{q}{k_1}\sum_{j=1}^d \lambda_j(A_1)^{k_1}\cdot \lambda_j(B_1)^{q - k_1}\right]\\
    \leq &~ \mathbb{E}_{\geq 2} \mathbb{E}_1\left[\sum_{k_1=0}^q\binom{q}{k_1}\lambda_{\max}(A_1)^{k_1}\cdot \sum_{j=1}^d \lambda_j(B)^{q - k_1}\right]\\
    = &~ \mathbb{E}_{1} \left[\sum_{k_1=0}^q\binom{q}{k_1}\lambda_{\max}(\mathsf{x}_1)^{k_1}\cdot \mathbb{E}_{\geq 2} \left[\sum_{j=1}^d \lambda_j\left(\sum_{i=2}^n \mathsf{x}_i\right)^{q - k_1}\right]\right].
\end{align*}
where the second step follows from the repeated application of Horn's inequality (Lemma~\ref{lem:horn}) and $A_1, B$ are positive semi-matrices, and the last step follows from $\mathsf{x}_1$ is independent with $\mathsf{x}_2,\dots,\mathsf{x}_n$.

Then, by repeating this process, we finally get that
\begin{align*}
    \mathbb{E}\left[\sum_{j=1}^d\lambda_{j}\left(\sum_{i=1}^n \mathsf{x}_i\right)^q\right]\leq &~ \E\left[ \sum_{\substack{k_1,\dots,k_n\geq 0\\k_1+\cdots+k_n=q}} \binom{q}{k_1,\dots,k_n} \prod_{i=1}^n \lambda_{\max}(\mathsf{x}_i)^{k_i}\cdot d \right]\\
    = &~ d\cdot \E\left[\left(\sum_{i=1}^n \lambda_{\max}(\mathsf{x}_i)\right)^q\right].
\end{align*}
Putting it to Eq.~\eqref{eq:exp_sum}, we have
\begin{align*}
    \Pr\left[\lambda_{\max}\left(\sum_{i=1}^n \mathsf{x}_i\right)\geq t\right] 
    \leq &~ \inf_{\theta >0}~e^{-\theta t}\cdot\sum_{q\geq 0}\frac{\theta^q}{q!}\E\left[\sum_{j=1}^d\lambda_{j}\left(\sum_{i=1}^n \mathsf{x}_i\right)^q\right]\\
    \leq &~ \inf_{\theta >0}~e^{-\theta t}\cdot\sum_{q\geq 0}\frac{\theta^q}{q!}\cdot d\cdot \E\left[\left(\sum_{i=1}^n \lambda_{\max}(\mathsf{x}_i)\right)^q\right]\\
    = &~ \inf_{\theta >0}~e^{-\theta t}\cdot d\cdot \E\left[\exp\left(\theta \cdot \sum_{i=1}^n \lambda_{\max}(\mathsf{x}_i)\right)\right]\\
    = &~ \inf_{\theta >0}~e^{-\theta t}\cdot d\cdot \prod_{i=1}^n \E\left[e^{\theta\lambda_{\max}(\mathsf{x}_i)}\right],
\end{align*}
where the third step follows from the linearity of expectation, and the last step follows from the independence of $\mathsf{x}_1,\dots,\mathsf{x}_n$.

For $x\in [0, 1]$, we know that $e^{\theta x}\leq 1 + (e^\theta-1)x$ holds for $\theta \in \R$. Thus,
\begin{align*}
    e^{-\theta t}\cdot d\cdot \prod_{i=1}^n \E\left[e^{\theta\lambda_{\max}(\mathsf{x}_i)}\right] \leq &~ e^{-\theta t}\cdot d\cdot \prod_{i=1}^n (1 + (e^\theta - 1)\E[\lambda_{\max}(\mathsf{x}_i)])\\
    = &~ d\cdot \exp\left(-\theta t + \sum_{i=1}^n\log\left(1 + (e^\theta - 1)\E[\lambda_{\max}(\mathsf{x}_i)]\right)\right)\\
    \leq &~ d\cdot \exp\left(-\theta t + \sum_{i=1}^n(e^\theta - 1)\E[\lambda_{\max}(\mathsf{x}_i)]\right)\\
    = &~ d\cdot \exp\left(-\theta t + (e^\theta - 1)\mu_{\max}\right)
\end{align*}
where the second step follows from our assumption that $\lambda_{\max}(x)\in [0, 1]$ for all $i\in [n]$, and the third step follows from $\log(1+x)\leq x$ for $x>-1$. Therefore, by taking $\theta := \log(t/\mu_{\max})$, we have
\begin{align}\label{eq:concen_geq_t}
    \Pr\left[\lambda_{\max}\left(\sum_{i=1}^n \mathsf{x}_i\right)\geq t\right] 
    \leq d\cdot \left(\frac{t}{\mu_{\max}}\right)^{-t}\cdot e^{t-\mu}.
\end{align}
If we choose $t:=(1+\delta)\mu_{\max}$, we get that
\begin{align*}
    \Pr\left[\lambda_{\max}\left(\sum_{i=1}^n \mathsf{x}_i\right)\geq (1+\delta)\mu_{\max}\right] \leq&~ d\cdot \left(\frac{(1+\delta)^{1+\delta}}{e^{\delta}}\right)^{-\mu_{\max}},
\end{align*}
which completes the proof of the maximum eigenvalue case.

\paragraph*{Minimum eigenvalue: }
We reduce this case to the maximum eigenvalue case by defining $\mathsf{x}_i':= e-\mathsf{x}_i$ for $i\in [n]$. Then, by Fact~\ref{fac:eigen_linear_trans}, 
\begin{align*}
    \lambda_{\max}(\mathsf{x}_i')= 1-\lambda_{\min}(\mathsf{x}_i)\leq 1, ~~\text{and}~~\lambda_{\min}(\mathsf{x}_i')= 1-\lambda_{\max}(\mathsf{x}_i)\geq 0.
\end{align*}
Thus,
\begin{align*}
    \Pr\left[\lambda_{\min}\left(\sum_{i=1}^n \mathsf{x}_i\right)\leq (1-\delta)\mu_{\min}\right] = &~ \Pr\left[\lambda_{\max}\left(\sum_{i=1}^n \mathsf{x}_i'\right)\geq n - (1-\delta)\mu_{\min}\right]\\
    \leq &~ d\cdot \left(\frac{n - (1-\delta)\mu_{\min}}{n - \mu_{\min}}\right)^{n - (1-\delta)\mu_{\min}}\cdot e^{\delta \mu_{\min}}\\
    = &~ d\cdot \left(1 + \frac{\delta}{n/\mu_{\min} - 1}\right)^{\left(\frac{n}{(1-\delta)\mu_{\min}} - 1\right) \cdot (1-\delta)\mu_{\min}}\cdot e^{\delta \mu_{\min}}\\
    \leq &~ d\cdot \left(\frac{(1-\delta)^{1-\delta}}{e^{-\delta}}\right)^{-\mu_{min}},
\end{align*}
where the second step follows from taking $t:=n-(1-\delta)\mu_{\min}$ in Eq.~\eqref{eq:concen_geq_t} and $\mu_{\max}' =n-\mu_{\min}$, the last step follows from $n/\mu_{\min}>0$.

Hence, the proof of the theorem is completed.
\end{proof}
\section{Hyperbolic anti-concentration bound}\label{sec:anticon}

\subsection{Our result}
In this section, we will prove an anti-concentration bound for random vectors with respect to the hyperbolic norm, which generalizes the result for PSD matrices in \cite{ay21}. In particular, an important tool we use is the hyperbolic Chernoff bound for random vectors in the hyperbolic cone (Theorem~\ref{thm:chernoff_cone}), together with a robust Littlewood-Offord theorem for hyperbolic cone (Theorem~\ref{thm:anticon_good}).

\begin{theorem}[Hyperbolic anti-concentration theorem]\label{thm:anti-concen}
Let $h_1,h_2$ be an $m$-variate degree-$d$ hyperbolic polynomial with hyperbolic direction $e_1,e_2\in \R^m$, respectively. Let $y_1, y_2 \in \R^m$ be two vectors. Let $\{x^1_i\}_{i\in [n]}$ and $\{x^2_i\}_{i\in [n]}$ be two sequences of vectors such that $x_i^1\in \Lambda_{+,h_1}$ and $x_i^2\in (-\Lambda_{+,h_2})$, i.e., $\lambda_{\min,h_1}(x_i^{1})\geq 0$, $\lambda_{\max,h_2}(x_i^{2})\leq 0$ for all $i\in [n]$. 

Let $\tau \leq \frac{1}{\sqrt{\log d}}$. We further assume that 
$\lambda_{\max, h_1}(x^1_i)\leq \tau$ and $\lambda_{\min,h_2}(x^{2}_i)\geq -\tau$ for all $i\in [n]$. And for $j\in [2]$, we have $\sum_{i=1}^n \lambda_{\min}(x_i^j)^2\geq 1$.

Then, for $\Delta \geq 20\tau \log d$, we have
\begin{align*}
    \Pr_{\epsilon\sim \{-1,1\}^n}\left[\exists j\in [2]: ~\left\|\sum_{i=1}^n \epsilon_i x^j_i-y_j\right\|_{h_j} \leq \Delta \right]\leq O(\Delta).
\end{align*}
\end{theorem}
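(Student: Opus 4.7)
The plan is to follow the bucketing-plus-concentration strategy sketched in the technique overview, reducing Theorem~\ref{thm:anti-concen} to a robust Littlewood--Offord bound on the hyperbolic cone (Theorem~\ref{thm:anticon_good}) by first using Theorem~\ref{thm:chernoff_cone} to amplify a collective lower bound on $\sum_i \lambda_{\min}(x_i)^2$ into a per-vector lower bound on $\lambda_{\min}$. By a union bound in $j\in[2]$ (and negating all signs $\epsilon_i$ in the $j=2$ case so that $-x_i^2 \in \Lambda_{+,h_2}$), it suffices to prove, for a single hyperbolic polynomial $h$ with direction $e$ and vectors $x_1,\dots,x_n\in \Lambda_+$ satisfying the hypotheses of the theorem,
\begin{align*}
    \Pr_{\epsilon\sim \{\pm 1\}^n}\!\left[\Big\|\sum_{i=1}^n \epsilon_i x_i-y\Big\|_h \le \Delta\right] \le O(\Delta).
\end{align*}

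First I would exploit that, since $x_i \in \Lambda_+$, the maps $\epsilon \mapsto \lambda_{\max}(\sum \epsilon_i x_i - y)$ and $\epsilon \mapsto -\lambda_{\min}(\sum \epsilon_i x_i - y)$ are each unate (monotone in every coordinate) on $\{\pm 1\}^n$, by Fact~\ref{fac:eigen_linear_trans} combined with Theorem~\ref{thm:hyperbolic_cone_gar}. This places us in the framework of \cite{ost19,ay21} for Rademacher measures of unate sets, which translates the anti-concentration probability into a question about the boundary expansion of the slab $\{\|\cdot\|_h \le \Delta\}$ in the hypercube. The robust Littlewood--Offord statement for the hyperbolic cone (Theorem~\ref{thm:anticon_good}), however, requires each summand to have a \emph{nontrivial individual} $\lambda_{\min}$ bound of order $\Omega(1/\sqrt{\log d})$, whereas our hypothesis only gives $\sum_i \lambda_{\min}(x_i)^2 \ge 1$.

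To bridge this gap I will use the hashing/bucketing trick from \cite{ay21}. Let $B := \Theta(\sqrt{\log d})$ and hash the indices $[n]$ uniformly and independently into $B$ buckets $S_1,\dots,S_B$; write $z_\ell := \sum_{i\in S_\ell} x_i$. Because the $\epsilon_i$ remain i.i.d.\ across buckets, $\sum_i \epsilon_i x_i$ has the same distribution as $\sum_\ell \epsilon'_\ell (\text{any chosen sign pattern inside }S_\ell)$-style re-assembly, so bucketing does not change the law of the Rademacher sum. The key step is then to apply Theorem~\ref{thm:chernoff_cone} to the random vectors $\{\mathbf{1}[i\in S_\ell]\cdot x_i\}_i$ (whose supports lie in $\Lambda_+$ with $\lambda_{\max}\le \tau$), using that $\E[\lambda_{\min}(\mathbf{1}[i\in S_\ell] x_i)] = \tfrac{1}{B}\lambda_{\min}(x_i)$, to deduce
\begin{align*}
    \Pr\!\left[\lambda_{\min}(z_\ell) \le \tfrac{c}{\sqrt{\log d}}\right] \le \tfrac{1}{10}
\end{align*}
for an appropriate absolute constant $c>0$, provided $\sum_i \lambda_{\min}(x_i)^2 \ge 1$ and $\tau\ge 1/\sqrt{\log d}$. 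Since the bucket indicators are negatively associated, a standard Chernoff bound then shows that with probability at least $1-o(1)$ all but a vanishing fraction of buckets satisfy $\lambda_{\min}(z_\ell)\ge c/\sqrt{\log d}$.

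Conditioning on a good hash, the residual problem is to anti-concentrate $\sum_\ell \epsilon'_\ell z_\ell - y$ where now $z_\ell \in \Lambda_+$, $\lambda_{\max}(z_\ell) \le O(\tau\log d)$ (by the same Chernoff applied to the maximum eigenvalue), and $\lambda_{\min}(z_\ell) \ge c/\sqrt{\log d}$. Feeding these bucket-level bounds into the hyperbolic-cone Littlewood--Offord inequality (Theorem~\ref{thm:anticon_good}) yields $\Pr[\|\sum_\ell \epsilon'_\ell z_\ell - y\|_h \le \Delta] \le O(\Delta)$ whenever $\Delta \ge 20\tau\log d$, which matches the hypothesis on $\Delta$. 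Combining with the union bound over $j\in [2]$ and the $o(1)$ failure probability of the hash gives the stated $O(\Delta)$ bound. I expect the main obstacle to be calibrating the parameters in Theorem~\ref{thm:chernoff_cone} so that the multiplicative-Chernoff bound fires uniformly across all buckets without the $\log d$ factors corrupting the final linear-in-$\Delta$ rate; the condition $\tau \ge 1/\sqrt{\log d}$ and the choice $B=\Theta(\sqrt{\log d})$ are designed precisely to make both the per-bucket min-eigenvalue bound and the Littlewood--Offord slab hypothesis consistent at the same scale.
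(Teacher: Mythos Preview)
Your strategy---random hashing into buckets, applying Theorem~\ref{thm:chernoff_cone} to show most buckets have large $\lambda_{\min}$, then invoking Theorem~\ref{thm:anticon_good}---is exactly the paper's, but two steps in your execution are off.

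First, the bucket count is miscalibrated. The paper takes $2p$ buckets with $p=\tfrac{1}{20\tau^2\log d}$, not $B=\Theta(\sqrt{\log d})$. The reason is that Theorem~\ref{thm:chernoff_cone} carries a leading factor $d$, so to get $\Pr[\lambda_{\min}(\sigma_c)\le\tfrac12\mu_{\min}]\le\tfrac1{10}$ you need $\mu_{\min}/R\gtrsim\log d$; since $\mu_{\min}\ge\tfrac{1}{p\tau}$ (from $\sum_i\lambda_{\min}(x_i)^2\ge1$ and $\lambda_{\min}(x_i)\le\tau$) and $R=\tau$, this forces $p\tau^2\lesssim1/\log d$. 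The paper packages this calculation as Lemma~\ref{lem:bound_good_buckets}, and the good buckets have $\lambda_{\min}(\sigma_c)\ge\tfrac{1}{4\tau p}$, not $\Theta(1/\sqrt{\log d})$.

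Second, and more importantly, Theorem~\ref{thm:anticon_good} does not output $O(\Delta)$: it only bounds the probability that $\lambda_{\max}$ lands in a slab of \emph{fixed} width $2\rho=\tfrac{1}{2\tau p}$, and the bound is $O(1/\sqrt{p})$, independent of $\Delta$. The paper then covers $[-\Delta,\Delta]$ by $\lceil 2\tau p\Delta\rceil$ translates of that slab (shifting $y_j$ by multiples of $\tfrac{1}{2\tau p}\cdot e_j$) and takes a union bound, obtaining $O(\Delta\cdot\tau\sqrt{p})=O(\Delta)$ because $\tau\sqrt{p}=O(1)$ by the choice of $p$. Your sketch jumps directly from Theorem~\ref{thm:anticon_good} to the $O(\Delta)$ conclusion, omitting this covering argument; without it (and without the correct $p$) there is no mechanism that produces the linear dependence on $\Delta$.

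A minor remark: the paper does not union-bound over $j\in[2]$ but keeps both indices simultaneously, intersects the two good-bucket sets by pigeonhole, and feeds the common good fraction $\alpha=\tfrac35$ into Theorem~\ref{thm:anticon_good}, which is stated for both $j$'s at once. Your union-bound reduction would also work with only a constant loss, so this is a stylistic difference rather than an error.
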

\begin{proof}
We follow the proof in \cite{ay21} but adapt it to the hyperbolic polynomial.

Let $f_j(\epsilon):=\sum_{i=1}^n \epsilon_ix_i^j$ for $j\in [2]$. And we will first show that
\begin{align*}
    \Pr_{\epsilon\sim \{\pm 1\}^n}[\exists j\in [2]: \lambda_{\max,h_j}(f_j(\epsilon)-y_j)\leq \Delta]\leq O(\Delta),
\end{align*}
which implies the anti-concentration bound for the hyperbolic spectral norm.

Let $p:=\frac{1}{20\tau^2\log d}$ and let $\pi: [n]\rightarrow [2p]$ be a random hash function that independently assigns each $i \in [n]$ to uniformly random bucket in $[2p]$. For $i\in [2p]$, let $C_i:=\{j\in [n]: \pi[j]=i\}$ be the set of elements in the $i$-th bucket. Let $\gamma\sim \{\pm 1\}^{2p}$. For $j\in [2]$, define a new function $g_j(\gamma):\{\pm\}^{2k}\rightarrow \R^m$ as follows:
\begin{align*}
    g_j(\gamma) := \sum_{i=1}^{2p} \gamma_i \cdot \sum_{j\in C_i} x_i^j.
\end{align*}
That is, we assign the same sign for vectors hashed into the same bucket.

\cite{ay21} proved that $f_j(\epsilon)$ and $g_j(\gamma)$ have the same distribution using a direct argument about the random hash function. Thus, it is also true in our case and we just need to prove
\begin{align*}
    \Pr_{\gamma\sim \{\pm 1\}^{2p}}[\exists j\in [2]: \lambda_{\max,h_j}(g_j(\gamma)-y_j)\leq \Delta]\leq O(\Delta)
\end{align*}

For $j=1$, define the good bucket set
\begin{align*}
     {\cal B}_{\mathrm{good}}^1:=\left\{c\in [2p]: \lambda_{\min,h_1}\left(\sum_{i\in \pi^{-1}(c)} x_i^1\right)\geq \frac{1}{4\tau p}\right\}.
\end{align*}
By Lemma~\ref{lem:bound_good_buckets}, with probability at least $1-e^{-p/2}$, we have $|{\cal B}_{\mathrm{good}}^1|\geq \frac{8}{5}p$.  

For $j=2$, define the good bucket set
\begin{align*}
    {\cal B}_{\mathrm{good}}^2:=\left\{c\in [2p]: \lambda_{\max,h_2}\left(\sum_{i\in \pi^{-1}(c)} x_i^2\right)\leq -\frac{1}{4\tau p}\right\}.
\end{align*}
By considering $-x_i^2$ and applying Lemma~\ref{lem:bound_good_buckets}, we get that with with probability at least $1-e^{-p/2}$, we have $|{\cal B}_{\mathrm{good}}^2|\geq \frac{8}{5}p$.

By a pigeonhole principle and union bound, with probability $1-2e^{-p/2}$, $|{\cal B}_{\mathrm{good}}^1 \cap {\cal B}_{\mathrm{good}}^2|\geq \frac{6}{5}p$. That is, at least $\frac{3}{5}$-fraction of $i\in [2p]$ such that 
\begin{align*}
    \lambda_{\min,h_1}\left(\sum_{j\in \pi^{-1}(c)}x_i^1\right)\geq \frac{1}{4\tau p}, ~~\text{and}~~ \lambda_{\max,h_2}\left(\sum_{j\in \pi^{-1}(c)}x_i^2\right)\leq -\frac{1}{4\tau p}.
\end{align*}
Thus, we can apply Theorem~\ref{thm:anticon_good} with $\alpha=\frac{3}{5}, \rho = \frac{1}{4\tau p}$ and get that
\begin{align*}
    \Pr_{\gamma\sim \{-1,1\}^{2p}}\left[ \exists j\in [2]:~\lambda_{\max,h_j}\left(\sum_{i=1}^{2p} \gamma_i \cdot \sum_{k\in \pi^{-1}}x_k^j-y_j\right)\in \left(-\frac{1}{2\tau p},0\right] \right]\leq O\left(\frac{1}{\sqrt{p}}\right)+2e^{-p/2}.
\end{align*}
Now, we transform back to the distribution of $f_j(\epsilon)$ and have the following bound:
\begin{align}\label{eq:bound_small_interval}
    \Pr_{\epsilon\sim \{\pm 1\}^n}\left[\exists j\in [2]: \lambda_{\max,h_j}(f_j(\epsilon)-y_j)\in \left(-\frac{1}{2\tau p},0\right]\right]\leq O\left(\frac{1}{\sqrt{p}}\right)+2e^{-p/2}.
\end{align}
However, by our choice of parameters, $\Delta \geq \frac{1}{2\tau p}$. We can partition the interval $[-\Delta, 0]$ into $\lceil 2\tau p\Delta\rceil$ sub-intervals each of length $\frac{1}{2\tau p}$. Since Eq.~\eqref{eq:bound_small_interval} holds for any $y_j\in \R^m$, we can use it to bound the probability of the event $\lambda_{\max,h_j}(f_j(\epsilon)-y_j)\in (-\frac{k}{2\tau p}, -\frac{k-1}{2\tau p}]$ by shifting $y_j':=y_j + \frac{k-1}{2\tau p}\cdot e$. Therefore, by the union bound, we have
\begin{align*}
    \Pr_{\epsilon\sim \{\pm 1\}^n}\left[\exists j\in [2]: \lambda_{\max,h_j}(f_j(\epsilon)-y_j)\in [\Delta,0]\right]\leq &~ \lceil 2\tau p\Delta\rceil\cdot \left(O\left(\frac{1}{\sqrt{p}}\right)+2e^{-p/2}\right)\\
    = &~ O(\Delta \cdot \tau \sqrt{p})\\
    = &~ O(\Delta),
\end{align*}
where the last step follows from $\tau \sqrt{p} = O(1)$ by our choice of parameters.

We note that the above upper bound also holds for the interval $[0, \Delta]$. Hence, we complete the proof of the theorem.
\end{proof}

\subsection{Technical lemmas}
To prove Theorem~\ref{thm:anti-concen}, we need a robust Littlewood–Offord theorem for hyperbolic cones. This kind of theorems were previously proved by \cite{ost19} for polytopes and \cite{ay21} for positive spectrahedrons.

We first give some definitions in \cite{ost19} about functions on hypercube.
\begin{definition}[Unateness]
A function $F : \{-1, 1\}^n\rightarrow \{0, 1\}$ is unate if for all $i\in [n]$, $F$ is either increasing or decreasing with respect to the $i$th coordinate, i.e.,
\begin{align*}
    F(x_1,\dots,x_{i-1}, -1, x_{i+1},\dots,x_n) \leq&~ F(x_1,\dots,x_{i-1}, 1, x_{i+1},\dots,x_n)~~~\forall x\in \{\pm 1\}^n, ~~\text{or}\\
    F(x_1,\dots,x_{i-1}, -1, x_{i+1},\dots,x_n) \geq&~ F(x_1,\dots,x_{i-1}, 1, x_{i+1},\dots,x_n)~~~\forall x\in \{\pm 1\}^n.
\end{align*}
\end{definition}

Let $H,\ov{H}$ be the indicator set of two unate functions and $H\subset \ov{H}$. The boundary of $H$ is denoted by $\partial H:=\ov{H}\backslash H$.
\begin{definition}[Semi-thin]
For $\alpha\in [0, 1]$, we say $\partial H$ is $\alpha$-semi thin if for all $x\in H$, at least $\alpha$-fraction of its hypercube neighbors (different in one coordinate) are not in $\partial {H}$. 
\end{definition}

Now, we state the main theorem of this section:
\begin{theorem}[Robust Littlewood-Offord theorem for hyperbolic cones]\label{thm:anticon_good}
Let $\alpha\in [0, 1], \rho >0$. Let $\{x^j_i\}_{i\in [n], j\in [2]}$ be $2n$ vectors in $\R^m$ such that $x_i^1\in \Lambda_{+,h_1}, x_i^2\in (-\Lambda_{+,h_2})$ for all $i\in [n]$.  If there are at least $\alpha$-fraction of $i\in [n]$ such that $\lambda_{\min,h_1}(x_i^1) \geq \rho$ and $\lambda_{\max,h_2}(x_i^2)\leq -\rho$, then we have
\begin{align*}
    \Pr_{\epsilon\sim \{-1,1\}^n}\left[ \exists j\in [2]:~\lambda_{\max,h_j}\left(\sum_{i=1}^n \epsilon_i x_i^j-y_j\right)\in (-2\rho,0] \right]\leq O\left(\frac{1}{\alpha\sqrt{n}}\right).
\end{align*}
\end{theorem}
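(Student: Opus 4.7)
The plan is to follow the robust Littlewood-Offord paradigm of \cite{ost19,ay21}, treating the two indices $j\in [2]$ separately and then combining by a union bound. For each $j$ I would introduce the nested sublevel sets
\begin{align*}
H_j := \{\epsilon\in\{\pm 1\}^n : \lambda_{\max,h_j}(f_j(\epsilon)-y_j)\leq -2\rho\}, \quad \ov{H}_j := \{\epsilon : \lambda_{\max,h_j}(f_j(\epsilon)-y_j)\leq 0\},
\end{align*}
where $f_j(\epsilon):=\sum_{i=1}^n \epsilon_i x_i^j$, so the boundary $\partial H_j := \ov{H}_j \setminus H_j$ is exactly the ``bad'' event $\lambda_{\max,h_j}(f_j(\epsilon)-y_j)\in (-2\rho, 0]$ for index $j$.

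The first step is to verify unateness of the indicators of $H_j$ and $\ov{H}_j$. Because $\lambda_{\max}$ is convex and positively homogeneous (Theorem~\ref{thm:hyperbolic_cone_gar}), sub-additivity $\lambda_{\max}(A+B)\leq \lambda_{\max}(A)+\lambda_{\max}(B)$ holds, and by writing $A=(A+B)+(-B)$ and using $\lambda_{\max}(-B)=-\lambda_{\min}(B)$ from Fact~\ref{fac:eigen_linear_trans} I also get $\lambda_{\max}(A+B)\geq \lambda_{\max}(A)+\lambda_{\min}(B)$. For $j=1$, flipping $\epsilon_i$ from $-1$ to $+1$ increments $f_1$ by $2x_i^1\in \Lambda_{+,h_1}$, which has nonnegative $\lambda_{\min}$, so $\lambda_{\max,h_1}$ is coordinate-wise non-decreasing in $\epsilon$; both indicators are decreasing. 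For $j=2$, since $x_i^2\in -\Lambda_{+,h_2}$, the indicators are instead coordinate-wise increasing. Either way both are unate in matching directions, which is what the downstream Littlewood-Offord lemma requires.

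The second step is $\alpha$-semi-thinness of $\partial H_j$. Fix $\epsilon\in \partial H_1$ and a ``good'' index $i$, i.e., one satisfying $\lambda_{\min,h_1}(x_i^1)\geq \rho$ (by hypothesis there are at least $\alpha n$ such). If $\epsilon_i=-1$, the flipped sequence $\epsilon^{(i)}$ obeys
\begin{align*}
\lambda_{\max,h_1}(f_1(\epsilon^{(i)})-y_1) \geq \lambda_{\max,h_1}(f_1(\epsilon)-y_1)+2\lambda_{\min,h_1}(x_i^1) > -2\rho+2\rho = 0,
\end{align*}
so $\epsilon^{(i)}\notin \ov{H}_1$; if $\epsilon_i=+1$, symmetrically
\begin{align*}
\lambda_{\max,h_1}(f_1(\epsilon^{(i)})-y_1) \leq \lambda_{\max,h_1}(f_1(\epsilon)-y_1)-2\lambda_{\min,h_1}(x_i^1) \leq 0 - 2\rho,
\end{align*}
so $\epsilon^{(i)}\in H_1$. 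In both cases $\epsilon^{(i)}\notin \partial H_1$, and since there are at least $\alpha n$ such good coordinates, $\partial H_1$ is $\alpha$-semi-thin. The case $j=2$ is identical after swapping to $-x_i^2\in \Lambda_{+,h_2}$ and using $\lambda_{\max,h_2}(x_i^2)\leq -\rho$.

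Finally I would invoke the Littlewood-Offord inequality for unate nested sets in \cite{ost19,ay21}: whenever $H\subset \ov H \subseteq \{\pm 1\}^n$ are unate in matching directions and $\partial H$ is $\alpha$-semi-thin, the uniform Rademacher mass of $\partial H$ is $O(1/(\alpha\sqrt{n}))$. Applying this to each $j\in [2]$ and union-bounding gives the claimed probability. The main obstacle I anticipate is not the combinatorial step but the second one: I must argue carefully that Weyl-type inequalities $\lambda_{\max}(A\pm B)$ vs $\lambda_{\max}(A)\pm\lambda_{\min}(B)$ are available in the hyperbolic setting. These reduce cleanly to G{\aa}rding's convexity of $\lambda_{\max}$ and concavity of $\lambda_{\min}$ together with homogeneity, so the overall argument is a direct transcription of the positive-spectrahedral proof of \cite{ay21} with matrix eigenvalues replaced by hyperbolic ones.
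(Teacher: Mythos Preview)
Your proposal is correct and follows essentially the same route as the paper: define the nested level sets $H_j\subset \ov H_j$, establish unateness from the sign conditions on the $x_i^j$, prove $\alpha$-semi-thinness of $\partial H_j$ via the hyperbolic Weyl-type inequalities $\lambda_{\max}(A+B)\ge \lambda_{\max}(A)+\lambda_{\min}(B)$ and $\lambda_{\max}(A-B)\le \lambda_{\max}(A)-\lambda_{\min}(B)$ (which, as you note, follow from G{\aa}rding's convexity/homogeneity), and then invoke the robust Littlewood--Offord bound of \cite{ost19}. The only structural difference is that the paper forms the intersection $F:=H_1\cap H_2$, $\ov F:=\ov{H_1}\cap\ov{H_2}$ and applies Theorem~7.18 of \cite{ost19} once to $\partial F$, whereas you bound each $\partial H_j$ separately and union bound over $j\in[2]$; since there are only two indices this costs nothing, and your route is arguably cleaner.
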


\begin{proof}
For each $j\in [2]$, define two sets:
\begin{align*}
    H_j := &~ \left\{\epsilon\in \{-1,1\}^n: \lambda_{\max,h_1}\left(\sum_{i=1}^n \epsilon_i x_i^j\right)\leq -2\rho\right\},\\
    \ov{H_j}:= &~ \left\{\epsilon\in \{-1,1\}^n: \lambda_{\max,h_2}\left(\sum_{i=1}^n \epsilon_i x_i^j\right)\leq 0\right\}.
\end{align*}
Then, we have
\begin{align*}
    \partial H_j := \ov{H_j}\backslash H_j = \left\{\epsilon\in \{-1,1\}^n: \lambda_{\max,h_j}\left(\sum_{i=1}^n \epsilon_i x_i^j\right)\in (-2\rho, 0]\right\}.
\end{align*}
Define $F := H_1 \cap H_2$ and $\partial F := (\ov{H_1} \cap \ov{H_2})\backslash F$. Hence,
\begin{align*}
    \partial F = \left\{\epsilon\in \{-1,1\}^n: \exists j\in [2]~\text{s.t.}~ \lambda_{\max,h_j}\left(\sum_{i=1}^n \epsilon_i x_i^j\right)\in (-2\rho, 0]\right\}.
\end{align*}
For  any $\epsilon\in H_1$, consider its hypercube-neighbour $\epsilon'$ which flip the $k$-th coordinate of $\epsilon$. If $\epsilon'\in \partial H_1$, then we have
\begin{align*}
    \lambda_{\max,h_1}\left(\sum_{i=1}^n \epsilon_i x_i^1 - 2\epsilon_k x_k^1\right)\in (-2\rho, 0], \quad \lambda_{\max,h_1}\left(\sum_{i=1}^n \epsilon_i x_i^1\right)\leq -2\rho.
\end{align*}
It implies that $\epsilon_k = -1$. By the fact that $\lambda_{\max}(x+y)\geq \lambda_{\max}(x)+\lambda_{\min}(y)$, we have
\begin{align*}
    \lambda_{\max,h_1}\left(\sum_{i=1}^n \epsilon_i x_i^1\right) + 2\lambda_{\min,h_1}(x_k^1)\leq \lambda_{\max}\left(\sum_{i=1}^n \epsilon_i x_i^1 + 2x_k^1\right)\leq 0,
\end{align*}
which means $\lambda_{\min,h_1}(x_k^1)\leq \rho$. However, we assume that there are $\alpha$-fraction of $k\in [n]$ such that $\lambda_{\min,h_1}(x_k^1)\geq \rho$. Hence, $H_1$ is $\alpha$-semi thin. 

Similarly, for $\epsilon\in H_2$ and its hypercube-neighbor $\epsilon'$ with the $k$-th coordinate flipped, if $\epsilon'\in \partial H_2$, we have
\begin{align*}
    \lambda_{\max,h_2}\left(\sum_{i=1}^n \epsilon_i x_i^2 -2 x_k^2\right)\in (-2\rho, 0], \quad \lambda_{\max,h_2}\left(\sum_{i=1}^n \epsilon_i x_i^2\right)\leq -2\rho.
\end{align*}
Hence,
\begin{align*}
    \lambda_{\max,h_2}\left(\sum_{i=1}^n \epsilon_i x_i^2\right) + 2\lambda_{\min,h_2}(-x_k^2) = \lambda_{\max,h_2}\left(\sum_{i=1}^n \epsilon_i x_i^2\right) - 2\lambda_{\max,h_2}(x_k^2)\leq 0,
\end{align*}
which implies $\lambda_{\max,h_2}(x_k^2)\geq -\rho$. Then, by our assumption, $H_2$ is also $\alpha$-semi thin.

Thus, by Theorem 7.18 in \cite{ost19}, we have
\begin{align*}
    \mathrm{vol}(\partial F)\leq O(1/(\alpha\sqrt{n})),
\end{align*}
which implies the probability upper bound in the lemma.
\end{proof}

In order to satisfy the $\alpha$-semi thin condition in Theorem~\ref{thm:anticon_good}, we use the following lemma using random hash function to bucket the vectors such that the resulting distribution will make the condition hold.

\begin{lemma}[Lemma 46 in \cite{ay21}]\label{lem:bound_good_buckets}
Let $\tau \in (0, \frac{1}{100\sqrt{\log d}}]$. Let $\{x_i\}_{i\in [n]}\subset \R^m$ be a sequence of vectors in the hyperbolic cone $\Lambda_{+}$ of $h$ such that
\begin{align*}
    \lambda_{\max}(x_i)\leq \tau, ~~ \sum_{i=1}^n \lambda_{\min}(x_i)^2 \geq 1~~~\forall i\in [n].
\end{align*}
Let $p\geq \frac{1}{10\tau^2\log d}$ and $\pi:[n]\rightarrow [p]$ be a random hash function that independently assigns each $i\in [n]$ to a uniformly random bucket in $[p]$. For each $c\in [p]$, define $\sigma_c := \sum_{i\in \pi^{-1}(c)}x_i$. And we say $c\in [p]$ is good if $\lambda_{\min}(\sigma_c)\geq \frac{1}{2\tau p}$. 

Then, we have
\begin{align*}
    \Pr\left[|\{c\in [p]: c~\text{is good}\}|\leq \frac{4}{5}p\right]\leq \exp(-p/4).
\end{align*}
\end{lemma}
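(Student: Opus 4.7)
The plan is to analyze each bucket $c\in [p]$ with the hyperbolic Chernoff bound for random vectors in the cone $\Lambda_{+}$ (Theorem~\ref{thm:chernoff_cone}), and then to convert these per-bucket estimates into a tail bound on the number of bad buckets via negative association. Writing $\sigma_c = \sum_{i=1}^n Z_{i,c}\, x_i$ with $Z_{i,c} := \mathbf{1}[\pi(i)=c]$, the random vectors $\{Z_{i,c} x_i\}_i$ are independent, supported in $\Lambda_{+}$, and satisfy $\lambda_{\max}(Z_{i,c} x_i) \le \tau$, so Theorem~\ref{thm:chernoff_cone} applies to $\sigma_c$ with $R=\tau$.

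The first step would be to lower-bound $\mu_{\min,c} := \sum_i \E[\lambda_{\min}(Z_{i,c} x_i)] = \tfrac{1}{p}\sum_i \lambda_{\min}(x_i)$. Since $0\le \lambda_{\min}(x_i)\le \tau$, we have $\lambda_{\min}(x_i)^2\le \tau\,\lambda_{\min}(x_i)$; combined with $\sum_i \lambda_{\min}(x_i)^2\ge 1$ this yields $\sum_i \lambda_{\min}(x_i)\ge 1/\tau$, hence $\mu_{\min,c}\ge 1/(\tau p)$ and $\mu_{\min,c}/R \ge 1/(\tau^2 p) \gtrsim \log d$ in the intended regime $p=\Theta(1/(\tau^2\log d))$. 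I would then take $\delta=1/2$ in Theorem~\ref{thm:chernoff_cone}; since $(1-\delta)\mu_{\min,c}\ge 1/(2\tau p)$, this gives
\begin{align*}
\Pr[c \text{ is bad}]
\;\le\; \Pr\bigl[\lambda_{\min}(\sigma_c)\le \tfrac{1}{2}\mu_{\min,c}\bigr]
\;\le\; d\cdot \left(\frac{e^{1/2}}{\sqrt{2}}\right)^{-\mu_{\min,c}/\tau},
\end{align*}
which by the calibration of the hypothesis constants is at most some absolute $q\in (0,1/5)$.

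For the bucket-wise concentration, set $Y_c := 1-\mathbf{1}[c \text{ is bad}]$. Super-additivity of $\lambda_{\min}$ on $\Lambda_{+}$ (a consequence of concavity plus positive homogeneity, Theorem~\ref{thm:hyperbolic_cone_gar}) shows $\lambda_{\min}(\sigma_c)$, and hence $Y_c$, is non-decreasing in each $Z_{i,c}$. Because the one-hot vectors $(Z_{i,1},\dots,Z_{i,p})$ are negatively associated for each fixed $i$ and independent across $i$, closure properties of negative association imply $\{Y_c\}_c$ is itself negatively associated. A multiplicative Chernoff bound for sums of negatively associated indicators applied to $\sum_c Y_c$ (whose mean is at least $(1-q)p$) gives $\Pr[\sum_c Y_c\le \tfrac{4}{5}p]\le \exp(-\Omega(p))$; tightening the constants by choosing $q$ small enough produces the stated $\exp(-p/4)$.

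The hard part will be the per-bucket tail estimate: Theorem~\ref{thm:chernoff_cone} carries a factor of $d$ out front, so the Chernoff exponent $\tfrac{1-\ln 2}{2}\cdot (\mu_{\min,c}/\tau)$ at $\delta=1/2$ must dominate $\log d$ by a comfortable margin. The numerical choices $\tau \le 1/(100\sqrt{\log d})$ and $p\ge 1/(10\tau^2\log d)$ are tuned precisely so that $\mu_{\min,c}/\tau$ is a large enough multiple of $\log d$ to kill the leading $d$; if one worries about very small $d$, one can instead invoke the scalar lower bound $\lambda_{\min}(\sigma_c)\ge \sum_i Z_{i,c}\lambda_{\min}(x_i)$ (valid by super-additivity on $\Lambda_{+}$) and apply a scalar Bernstein inequality, which avoids the factor of $d$ entirely. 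A secondary subtlety is the negative-association step, where verifying the monotonicity of $Y_c$ in the Bernoulli hash variables again relies on super-additivity of the minimum hyperbolic eigenvalue on $\Lambda_{+}$.
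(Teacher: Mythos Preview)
Your proposal is correct and follows essentially the same route as the paper: fix a bucket, apply Theorem~\ref{thm:chernoff_cone} with $\delta=1/2$, $R=\tau$, and the lower bound $\mu_{\min,c}\ge 1/(\tau p)$ (obtained exactly as you do, via $\lambda_{\min}(x_i)^2\le \tau\lambda_{\min}(x_i)$), then pass to the global statement via negative association of the bucket indicators and a Chernoff bound. Your explicit justification of monotonicity of $Y_c$ through super-additivity of $\lambda_{\min}$ on $\Lambda_+$ is a detail the paper leaves implicit, and your scalar-Bernstein fallback is an extra observation not present there, but the core argument is the same.
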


\begin{proof}
Fix $c\in [p]$. Define indicator random variables $z_i\in \{0,1\}$ for $i\in [n]$ such that $z_i=1$ if $\pi(i)=c$. Since $\pi$ is a random hash function, we have $\Pr[z_i = 1] = \frac{1}{p}$. Then, consider the random vectors $\{z_i x_i\}_{i\in [n]}$. For each $x_i$, $\supp(z_i x_i)\in \Lambda_+$ and $\lambda_{\max}(z_i x_i)\leq \tau$. We note that $\sigma_c = \sum_{i=1}^n z_i x_i$, and
\begin{align*}
    \mu_{\min} = &~ \sum_{i=1}^n \E[\lambda_{\min}(z_ix_i)]= \frac{1}{m}\sum_{i=1}^n \lambda_{\min}(x_i)\\
    \geq &~ \frac{1}{m}\sum_{i=1}^n \lambda_{\min}(x_i)^2\cdot \frac{1}{\lambda_{\max}(x_i)}\\
    \geq &~ \frac{1}{\tau m}.
\end{align*}
Then, by Theorem~\ref{thm:chernoff_cone} with $\delta = 1/2, \mu_{\min}=1/(\tau p), R = \tau$, we have
\begin{align*}
    \Pr\left[\lambda_{\min}\left(\sum_{i=1}^n z_ix_i\right) \leq \frac{1}{2\tau p} \right]\leq d \cdot (2/e)^{\frac{1}{2\tau^2 p}} \leq \frac{1}{10},
\end{align*}
where the last step follows from $p\geq \frac{1}{10\tau^2\log d}$. That is,
\begin{align}\label{eq:bound_sigma_c}
    \Pr\left[\lambda_{\min}(\sigma_c)\geq \frac{1}{2\tau p}\right] \geq 1 - \frac{1}{10} = \frac{9}{10}.
\end{align}

Then, for all $c\in [p]$ and $i\in [n]$, define the indicator variables $B_{c,i}:=\mathbf{1}[\pi(i) = c]$.  Then, $\{B_{c,i}\}_{c\in [p],i\in [n]}$ are negatively associated by a balls and bins argument (see \cite{mr95} for details). Now, the even that $c$ is good can be represented by the indicator variables $G_c:=\mathbf{1}[\lambda_{\min}(\sum_{i=1}^n B_{c,i}x_i)\geq \frac{1}{2\tau p}]$ for $c\in [p]$, which is constructed by applying a monotone non-decreasing function to $\{B_{c,i}\}_{i\in[n]}$. Hence, we know that $\{G_c\}_{c\in [p]}$ are also negatively associated. By Eq.~\eqref{eq:bound_sigma_c}, we have $\E[G_c]\geq \frac{9}{10}$. Thus, by the Chernoff bound for negatively associated random variables, we have
\begin{align*}
    \Pr\left[\sum_{i=1}^p G_c\leq \frac{4}{5}p\right]\leq \exp(-p/4),
\end{align*}
which completes the proof of the lemma.
\end{proof}
\section{Discrepancy result}
In this section, we will show how to relax the isotropic condition in the hyperbolic kadison-Singer theorem~\cite{b18}. And we will apply our hyperbolic concentration result to prove a discrepancy upper bound that works for general vectors.
\subsection{Preliminaries}\label{sec:disp_prelim}
In this section, we formally state some matrix discrepancy results. We first formally state the discrepancy theorem implied by Kadison-Singer theorem.

\begin{theorem}[\cite{mss15}]
Let $x_1, \dots, x_n \in \C^m$ and suppose $\|x_ix_i^*\|\leq \epsilon$ for all $i\in [n]$ and $\sum_{i=1}^n x_ix_i^* = I$. Then, there exist signs $r\in \{-1, 1\}^n$ such that
\begin{align*}
    \left\|\sum_{i=1}^n r_i x_ix_i^*\right\|\leq O(\sqrt{\epsilon}).
\end{align*}
\end{theorem}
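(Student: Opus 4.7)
The plan is to deduce this discrepancy statement as a direct corollary of the Kadison--Singer theorem (Theorem~\ref{thm:kadison_singer}) applied with $k=2$. Given the hypotheses $\|x_i x_i^*\| \leq \epsilon$ for all $i \in [n]$ and $\sum_{i=1}^n x_i x_i^* = I$, Theorem~\ref{thm:kadison_singer} furnishes a partition $[n] = S_1 \cup S_2$ such that
\begin{align*}
    \Big\|\sum_{i \in S_j} x_i x_i^*\Big\| \leq \Big(\tfrac{1}{\sqrt{2}} + \sqrt{\epsilon}\Big)^2 = \tfrac{1}{2} + \sqrt{2\epsilon} + \epsilon \quad \text{for } j = 1, 2.
\end{align*}
The signs will be chosen as the $\pm 1$ indicator of this partition: set $r_i = +1$ for $i \in S_1$ and $r_i = -1$ for $i \in S_2$.

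Next, I would translate the two spectral-norm bounds into a two-sided inequality on the partial sum $A := \sum_{i \in S_1} x_i x_i^*$. Because each $x_i x_i^*$ is positive semi-definite, $A \succeq 0$ and $I - A = \sum_{i \in S_2} x_i x_i^* \succeq 0$, so both $A$ and $I-A$ are Hermitian PSD matrices whose spectral norms are bounded by $\tfrac{1}{2} + \sqrt{2\epsilon} + \epsilon$. The upper bound on $\|A\|$ gives $A \preceq (\tfrac{1}{2} + \sqrt{2\epsilon} + \epsilon) I$, and the upper bound on $\|I - A\|$ gives $A \succeq (\tfrac{1}{2} - \sqrt{2\epsilon} - \epsilon) I$. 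Combining these,
\begin{align*}
    \Big\|A - \tfrac{1}{2} I\Big\| \leq \sqrt{2\epsilon} + \epsilon.
\end{align*}

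Finally, the signed sum rewrites as
\begin{align*}
    \sum_{i=1}^n r_i\, x_i x_i^* = A - (I - A) = 2A - I = 2\Big(A - \tfrac{1}{2} I\Big),
\end{align*}
so $\|\sum_{i=1}^n r_i\, x_i x_i^*\| \leq 2\sqrt{2\epsilon} + 2\epsilon = O(\sqrt{\epsilon})$, as desired.

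There is no real obstacle here; the proof is essentially bookkeeping once Theorem~\ref{thm:kadison_singer} is invoked with $k=2$. The only subtlety worth flagging is that the statement is vacuous unless $\epsilon$ is at most a constant (the bound $(\tfrac{1}{\sqrt{2}} + \sqrt{\epsilon})^2 \leq 1$ should hold for the two-sided squeeze on $A$ to be informative), but for small $\epsilon$ both the lower and upper PSD bounds are meaningful and the $O(\sqrt{\epsilon})$ discrepancy bound follows immediately.
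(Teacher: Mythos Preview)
Your proposal is correct and follows essentially the same route as the paper's own argument (sketched in the introduction): apply Theorem~\ref{thm:kadison_singer} with $k=2$, use the PSD structure to squeeze $A=\sum_{i\in S_1}x_ix_i^*$ between $(\tfrac12-O(\sqrt{\epsilon}))I$ and $(\tfrac12+O(\sqrt{\epsilon}))I$, and read off $\|2A-I\|\leq O(\sqrt{\epsilon})$.
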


This theorem also holds for high rank matrices as long as the isotropic condition holds:
\begin{theorem}[High rank Kadison-Singer \cite{c16,b18}]
Let $X_1, \dots, X_n \in \C^{d\times d}$ be positive semi-definite symmetric matrices such that $\tr[X_i]\leq \epsilon$ for all $i\in [n]$ and $\sum_{i=1}^n X_i = I$. Then, there exist signs $r\in \{-1, 1\}^n$ such that
\begin{align*}
    \left\|\sum_{i=1}^n r_i X_i\right\|\leq  O( \sqrt{ \epsilon } ).
\end{align*}
\end{theorem}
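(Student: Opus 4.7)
The plan is to derive this sign-based bound as a direct corollary of the high-rank Kadison--Singer paving statement (the $k=2$ case), exploiting the isotropic hypothesis $\sum_i X_i = I$ to convert a two-piece partition into a $\pm 1$ coloring. Concretely, I would first invoke the partition form of high-rank Kadison--Singer (the hyperbolic version Theorem~\ref{thm:hyperbolic_ks_sub_intro} with $h=\det$ specializes to this, but the purely matrix statement of \cite{c16,b18} is what is cited here) with $k=2$ to obtain a partition $S_1 \cup S_2 = [n]$ such that
\begin{align*}
    \Big\| \sum_{i \in S_j} X_i \Big\| \leq \tfrac{1}{2} + O(\sqrt{\epsilon}) \qquad \text{for } j \in \{1,2\}.
\end{align*}

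Next I would define signs $r \in \{-1,1\}^n$ by setting $r_i = +1$ if $i \in S_1$ and $r_i = -1$ if $i \in S_2$. Using the isotropic hypothesis, write
\begin{align*}
    \sum_{i=1}^n r_i X_i \;=\; \sum_{i \in S_1} X_i - \sum_{i \in S_2} X_i \;=\; 2 A - I, \qquad A := \sum_{i \in S_1} X_i,
\end{align*}
since $\sum_{i \in S_2} X_i = I - A$. The paving bound applied to $S_1$ gives $A \preceq (\tfrac12 + O(\sqrt{\epsilon})) I$, and applied to $S_2$ gives $I - A \preceq (\tfrac12 + O(\sqrt{\epsilon})) I$, equivalently $A \succeq (\tfrac12 - O(\sqrt{\epsilon})) I$. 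Combining the two sandwiches yields $-O(\sqrt{\epsilon}) I \preceq 2A - I \preceq O(\sqrt{\epsilon}) I$, whence
\begin{align*}
    \Big\| \sum_{i=1}^n r_i X_i \Big\| \;=\; \|2 A - I\| \;\leq\; O(\sqrt{\epsilon}),
\end{align*}
which is the desired bound.

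The only nontrivial input is the partition theorem itself, which is genuinely the hard ingredient (and is imported as a black box from \cite{c16,b18} or recovered from Theorem~\ref{thm:hyperbolic_ks_sub_intro} via $h = \det$, $e = I$, $\sigma = 1$). Given that ingredient, there is no real obstacle: the proof is essentially the well-known ``$2A - I$ trick'' used in deriving matrix Spencer-type bounds from paving statements, and it relies crucially on the isotropic hypothesis $\sum_i X_i = I$ so that the two-sided spectral control of $A$ follows from one-sided spectral control of each piece of the partition.
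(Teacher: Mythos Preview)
Your argument is correct and is precisely the standard ``$2A-I$'' derivation. Note, however, that in the paper this theorem is stated in the preliminaries (Section~\ref{sec:disp_prelim}) as a known result imported from \cite{c16,b18} and is not given a proof there. That said, the paper does use exactly the argument you describe when deriving hyperbolic discrepancy from the hyperbolic Kadison--Singer partition statement (see the proof of the corollary following Theorem~\ref{thm:branden_subisotropic}, and the discussion around Corollary~\ref{cor:branden_discrepancy}): take $k=2$ in the partition theorem, set $A=\sum_{i\in S_1}X_i$, use the two one-sided bounds on $\|A\|$ and $\|I-A\|$ together with isotropy to sandwich $A$ near $\tfrac12 I$, and conclude $\|2A-I\|\leq O(\sqrt{\epsilon})$. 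So your approach matches what the paper does in the analogous setting.
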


\cite{kls19} showed that the isotropic condition is not necessary for rank-1 matrices:
\begin{theorem}[Rank-1 matrix Spencer, \cite{kls19}]\label{thm:kls19}
Given $n$ vectors $x_1,\dots,x_n\in \C^m$. Let $\sigma^2 = \|\sum_{i=1}^n (x_ix_i^*)^2\|$. Then, there exists a choice of signs $r\in \{-1,1\}^n$ such that 
\begin{align*}
    \left\|\sum_{i=1}^n r_i x_ix_i^*\right\|\leq 4\sigma.
\end{align*}
\end{theorem}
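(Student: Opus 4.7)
The plan is to apply the method of interlacing polynomials of Marcus--Spielman--Srivastava. Let $A(r) := \sum_{i=1}^n r_i x_i x_i^*$. Since $A(r)$ is Hermitian and $A(-r) = -A(r)$, we have $\|A(r)\| = \max(\lambda_{\max}(A(r)), \lambda_{\max}(-A(r)))$. This motivates the symmetrized polynomial
\begin{align*}
    q_r(x) := \det(xI - A(r))\cdot \det(xI + A(r)) = \det(x^2 I - A(r)^2),
\end{align*}
whose largest real root is exactly $\|A(r)\|$. The goal thus reduces to finding $r^* \in \{\pm 1\}^n$ such that the largest root of $q_{r^*}$ is at most $4\sigma$.

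First I would show that $\{q_r\}_{r \in \{\pm 1\}^n}$ is an interlacing family over the natural binary tree of partial sign assignments. For each $s \in \{\pm 1\}^k$ with $k \leq n$, define $q_s := \E[q_r \mid r|_{[k]} = s]$. The common-interlacer condition at each internal node reduces, via Cauchy interlacing for rank-one perturbations, to the claim that the characteristic polynomials of $B$ and $B \pm vv^*$ interlace, and that this interlacing persists after pairing with the mirrored factor $\det(xI + B \mp vv^*)$. By the interlacing family theorem, there exists $r^*$ such that the largest root of $q_{r^*}$ is at most that of the expected polynomial $\mu(x) := \E_{r \sim \{\pm 1\}^n} q_r(x)$.

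Next I would compute and bound $\mu(x)$. Because $\det(xI - \sum z_i x_i x_i^*)$ is multilinear in each $z_i$, the polynomial $q_r(x)$ has degree at most two in each $r_i$; the Rademacher expectation therefore picks out precisely the all-even part, which by the matrix determinant lemma admits a closed-form expansion as a signed sum over subsets $S \subseteq [n]$ of Gram-type subdeterminants of the column matrix $X_S = [x_i]_{i \in S}$. To bound the largest root of $\mu(x)$ by $4\sigma$, I would introduce a barrier function adapted to the symmetrized polynomial (a natural candidate is a bivariate analogue of $\tr[(x^2 I - A^2)^{-1}]$) and proceed inductively, revealing one coordinate $r_i$ at a time. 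The curvature estimate at each step depends on second-moment-type quantities, and the hypothesis $\bigl\|\sum_i (x_i x_i^*)^2\bigr\| \leq \sigma^2$ provides exactly the quantitative control needed to keep the barrier below threshold at $x = 4\sigma$.

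The main obstacle is the barrier-step analysis: unlike the rank-one updates in the standard framework, each $r_i$ contributes a rank-two perturbation to $q_r$ (since both $\det(xI - A(r))$ and $\det(xI + A(r))$ depend on $r_i$). A clean route is to lift to the multivariate polynomial $P(y, z) := \det(yI - \sum_i z_i x_i x_i^*)\det(yI + \sum_i z_i x_i x_i^*)$, verify its real stability using the Borcea--Br\"and\'en framework, and produce $\mu(y)$ by applying the stability-preserving differential operator $\prod_i (1 - \partial_{z_i}^2/4)$ at $z = 0$. A Kadison--Singer-style barrier argument then controls the shift of the largest root under each operator application, and substituting $x = 4\sigma$ closes the estimate.
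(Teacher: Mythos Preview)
This theorem is not proved in the paper. It appears in Section~\ref{sec:disp_prelim} as a cited preliminary result from \cite{kls19}, stated without proof and used only for context when discussing prior work on matrix discrepancy. There is therefore no ``paper's own proof'' to compare your proposal against.

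That said, your sketch is a faithful outline of the actual argument in \cite{kls19}: the symmetrized characteristic polynomial $q_r(x)=\det(x^2I-A(r)^2)$, the interlacing-family structure over partial sign assignments, the multivariate real-stable lift $\det(yI-\sum_i z_i x_ix_i^*)\det(yI+\sum_i z_i x_ix_i^*)$, and a barrier argument controlling root movement under the stability-preserving operators that implement the Rademacher averaging. The quantitative input $\|\sum_i (x_ix_i^*)^2\|\le\sigma^2$ is exactly what the barrier step consumes. So your plan is correct in spirit and matches the original source; it is simply not something the present paper undertakes.
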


\subsection{Hyperbolic Kadison-Singer with relaxed condition}\label{sec:hy_ks}
The goal of this section is to prove Theorem~\ref{thm:branden_subisotropic}, which relaxes the isotropic condition in Corollary~\ref{cor:branden_discrepancy} to the bounded hyperbolic norm. 

We first formally state the upper bound in \cite{b18}:
\begin{definition}
For $r\in \mathbb{N}_+$, let $U_r$ be the set of all pairs $(\delta, \mu)\in \R_+\times \R_+$ such that
\begin{align*}
    \delta-1\geq \frac{\delta}{\mu}\cdot \frac{\left(1+\frac{\delta}{r\mu}\right)^{r-1}- \left(\frac{\delta}{r\mu}\right)^{r-1}}{\left(1+\frac{\delta}{r\mu}\right)^{r}- \left(\frac{\delta}{r\mu}\right)^{r}},
\end{align*}
and either $\mu>1$, or $\delta\in [1,2],\mu>1-\delta/r$.

Then, the upper bound in \cite{b18} is:
\begin{align*}
    \delta(\epsilon, n, r):=\inf_{(\delta, \mu)\in U_r}~~\frac{\epsilon\mu + (1-\frac{1}{n})\delta}{1+\frac{\mu-1}{n}}.
\end{align*}

In particular, $\delta(\epsilon, \infty, r):= \inf_{(\delta, \mu)\in U_r} \epsilon\mu + \delta$.
\end{definition}

\begin{theorem}[\cite{b18}]\label{thm:bra18_formal}
Let $k \geq 2$ be an integer and $\epsilon$ a positive real number. Suppose $h$ is hyperbolic with respect to $e\in \R^m$, and let $x_1,\dots,x_n \in \Lambda_+(h,e)$ be such that
\begin{align*}
    \tr_h[x_i] \leq \epsilon, ~~\rank(x_i)\leq r~~\forall i\in [n], ~\text{and}\quad \sum_{i=1}^n x_i=e.
\end{align*}
Then there is a partition $S_1 \cup S_2 \cup \cdots \cup S_k = [n]$ such that for all $j\in [k]$,
\begin{align*}
    \left\| \sum_{i\in S_j} x_i\right\|_h \leq \frac{1}{k}\cdot \delta\left(k\epsilon, n, rk\right).
\end{align*}
\end{theorem}
The high-level idea of proving Theorem~\ref{thm:bra18_formal} is similar to \cite{mss15}. We can show that this discrepancy upper-bound can be obtained by rounding a compatible family of polynomials, which is a generalization of the interlacing family defined in \cite{mss15}. Then, this rounding problem is further equivalent to upper-bound the largest root of a mixed hyperbolic polynomial (Definition~\ref{def:mixed_hyperbolic}), which is achieved by proving a structural result about the hyperbolic cone of the mixed hyperbolic polynomial.

Following this approach, we slightly generalize Theorem~\ref{thm:bra18_formal} by relaxing the isotropic condition:
\begin{theorem}\label{thm:branden_subisotropic}
Let $k \geq 2$ be an integer and $\epsilon,\sigma>0$. Suppose $h\in \R[z_1,\dots,z_m]$ is hyperbolic with respect to $e \in \R^m$, and let $x_1, \dots , x_n$ be $n$ vectors in the hyperbolic cone $\Lambda_+(h, e)$ (see Definition~\ref{def:hyper_cone}) such that
\begin{align*}
    \tr_h[x_i] \leq \epsilon, ~~\rank(x_i)\leq r~~\forall i\in [n], ~\text{and}\quad \left\|\sum_{i=1}^n x_i\right\|_h \leq \sigma.
\end{align*}

Then, there exists a partition $S_1 \cup S_2 \cup \cdots \cup S_k = [n]$ such that for all $j\in [k]$,
\begin{align*}
    \left\| \sum_{i\in S_j} x_i\right\|_h \leq \frac{\sigma}{k}\cdot \delta\left(\frac{k\epsilon}{\sigma}, n, rk\right).
\end{align*}
\end{theorem}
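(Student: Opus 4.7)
The plan is to follow Br\"{a}nd\'{e}n's proof \cite{b18} of the hyperbolic Kadison-Singer theorem step by step, and to modify only the final cone-membership calculation so that it uses the sub-isotropic bound $\|\sum_i x_i\|_h\leq \sigma$ in place of the isotropic equation $\sum_i x_i=e$. The ingredients I intend to reuse from \cite{b18} unchanged are: (i) the compatible family of polynomials indexed by partitions $\pi:[n]\to[k]$; (ii) the interlacing-family reduction, in the spirit of \cite{mss15}, of $\min_\pi\max_{j\in[k]}\lambda_{\max}\bigl(\sum_{i\in\pi^{-1}(j)} x_i\bigr)$ to the largest root of the expected polynomial; and (iii) the identification of that expected polynomial with the univariate restriction $h[v_1,\ldots,v_n](te+\mathbf{1})$ of the mixed hyperbolic polynomial, where $v_1,\ldots,v_n$ are the $1/k$-rescaled copies of $x_1,\ldots,x_n$ that arise from a uniformly random partition into $k$ buckets. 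By Theorem~\ref{thm:hyperbolic_cone_gar}, the largest root in question equals $\rho^*:=\inf\{\rho\geq 0:\rho e+\mathbf{1}\in \Gamma_+\}$, where $\Gamma_+$ is the hyperbolic cone of $h[v_1,\ldots,v_n]$ with respect to $(e,\mathbf{1})$.

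The departure from \cite{b18} happens in how $\rho^*$ is bounded. Re-examining Br\"{a}nd\'{e}n's iterative barrier-function construction, in which one feeds the vectors $v_i$ into a running barrier one at a time while maintaining $\Gamma_+$-membership, I observe that the isotropic hypothesis $\sum_i v_i=e$ is never invoked during the iteration itself; it is used only at the very last line to simplify the final expression. What the iteration actually produces is the stronger cone-membership
\[
\frac{\epsilon\mu\, e+\bigl(1-\tfrac{1}{n}\bigr)\delta\sum_{i=1}^n v_i}{1+(\mu-1)/n}+\mathbf{1}\;\in\;\Gamma_+,\qquad \text{for every } (\delta,\mu)\in U_{rk}.
\]
The sub-isotropic assumption translates to $\sum_i v_i\preceq_{\Lambda_+}(\sigma/k)\,e$, i.e.\ $(\sigma/k)e-\sum_i v_i\in\Lambda_+$. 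Combining the convexity of $\Gamma_+$ (Theorem~\ref{thm:hyperbolic_cone_gar}(1)) with the fact that translation of the $x$-component by any $\Lambda_+$-vector preserves $\Gamma_+$-membership, I can replace $\sum_i v_i$ by $(\sigma/k)\,e$ in the display and still remain in $\Gamma_+$, giving
\[
\rho^*\;\leq\;\frac{\epsilon\mu+(1-\tfrac{1}{n})\delta\sigma/k}{1+(\mu-1)/n}\qquad \text{for every } (\delta,\mu)\in U_{rk}.
\]
Taking the infimum over $(\delta,\mu)\in U_{rk}$ and recalling the definition of $\delta(\cdot,n,rk)$ recovers the claimed bound $\tfrac{\sigma}{k}\cdot \delta(k\epsilon/\sigma,n,rk)$.

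The main obstacle is the monotonicity claim used in the last paragraph: if $a\,e+\mathbf{1}+b V\in\Gamma_+$ with $V\in\Lambda_+$ and $u\in\Lambda_+$, then $a\,e+\mathbf{1}+b(V+u)\in\Gamma_+$ as well. Because $\Gamma_+$ lives in the larger variable space $(x,y_1,\ldots,y_n)$ while $u$ only moves the $x$-component, this is not immediate from abstract convex-cone considerations. I expect the cleanest proof to proceed through Corollary~\ref{cor:hv_2_vars}: applied to the two-variable slice spanned by $V$ and $u$, it realizes the hyperbolic eigenvalues of $V+tu$ as ordinary eigenvalues of a sum of two symmetric PSD matrices, and this matrix positivity can be propagated through the directional-derivative factorization of $h[v_1,\ldots,v_n]$ to conclude $\Gamma_+$-membership. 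Once this monotonicity step is verified, everything else is a direct rewriting of \cite{b18}'s bookkeeping with $\sigma/k$ in place of the isotropic constant $1$, and the optimization over $U_{rk}$ is taken verbatim from there.
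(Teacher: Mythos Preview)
Your plan is essentially the paper's proof: the paper also defers entirely to Br\"and\'en's argument for the compatible-family/interlacing reduction and modifies only the final root-bound step (its Theorem~\ref{thm:mix_hyper_root}), arriving at exactly the displayed cone membership $\frac{\epsilon\mu\,\ov{e}+(1-1/n)\delta\sum_i v_i}{1+(\mu-1)/n}+\underline{\mathbf{1}}\in\Gamma_+$ and then replacing $\sum_i v_i$ by $\sigma e$.

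The one place you diverge from the paper is your treatment of the monotonicity step, which you flag as the ``main obstacle'' and propose to attack via Helton--Vinnikov (Corollary~\ref{cor:hv_2_vars}). This is unnecessary. Br\"and\'en already proves that the hyperbolicity cone $\Gamma_+$ of $h[v_1,\dots,v_n]$ contains $\Lambda_{++}\times\R^n_{\leq 0}$, and hence its closure contains $\Lambda_+\times\{0\}$. Since $\lambda_{\max}(\sum_i v_i)\leq\sigma$ gives $\sigma e-\sum_i v_i\in\Lambda_+$, the vector $(\sigma\ov{e}-\overline{\sum_i v_i},0)$ lies in $\Gamma_+$; now add it (with the appropriate positive scalar) to your existing $\Gamma_+$-element using that $\Gamma_+$ is a convex cone. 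That is the paper's one-line justification (``since $\ov{e}\in\Gamma_{++}$, $\lambda_{\max}(\sum v_i)\leq\sigma$, and $\Gamma_+$ is a convex cone''), and it avoids any matrix realization. Your Helton--Vinnikov route would presumably also work, but it is considerably heavier and not needed here. One minor imprecision: the $v_i$ that arise from the $k$-bucket reduction are not literally $x_i/k$ but the diagonal embeddings $(x_i/k,\dots,x_i/k)$ in the $k$-fold product polynomial $\prod_{j=1}^k h$, which is why the rank becomes $rk$; your parameters are correct but the description of the $v_i$ should reflect this.
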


\begin{remark}\label{rmk:bound_inf}
By Eq.~(1.7) in \cite{b18}, the above bound is at most 
\begin{align*}
    \frac{\sigma}{k}\cdot \delta(k\epsilon/\sigma, \infty, \infty) = \frac{\sigma}{k}\cdot (1+\sqrt{k\epsilon/\sigma})^2 = \left(\sqrt{\epsilon} + \sqrt{\sigma/k}\right)^2,
\end{align*}
which also generalizes the result of \cite{mss15} (Theorem~\ref{thm:kadison_singer}) to hyperbolic polynomials with sub-isotropic condition.
\end{remark}

\begin{remark}\label{rmk:compare_Branden}
We note that a naive approach to relax the isotropic condition is to add some dummy vectors and then apply Theorem~\ref{thm:bra18_formal}. However, to satisfy the condition that each vector has trace at most $\epsilon$, the number of dummy vector can be $O(n/\epsilon)$ in the worst case. Then, this approach results in an upper bound of $\frac{\sigma}{k}\cdot \delta\left(\frac{k\epsilon}{\sigma},O(n/\epsilon),rk\right)$. By the property of the $\delta$ function, we know that this bound is worse than ours in Theorem~\ref{thm:branden_subisotropic}.  
\end{remark}

The proof of Theorem~\ref{thm:branden_subisotropic} is almost the same as the proof of Theorem 1.3 in \cite{b18}, but relies on the sub-isotropic version of the following theorem. Therefore, we will only prove Theorem~\ref{thm:branden_6.1}.
\begin{theorem}[Sub-isotropic version of Theorem 6.1 in \cite{b18}]\label{thm:branden_6.1}
Suppose $h\in \R[z_1,\dots,z_m]$ is a hyperbolic polynomial with respect to $e\in \R^m$. Let $\mathsf{x}_1,\dots,\mathsf{x}_m$ be independent random vectors in $\Lambda_+(e)$ with finite supports such that
\begin{align*}
   \tr_h[\E[\mathsf{x}_i]] \leq \epsilon, ~~\rank(\E[\mathsf{x}_i])\leq r~~\forall i\in [n], ~\text{and}\quad \left\|\sum_{i=1}^n \E[\mathsf{x}_i]\right\|_h \leq \sigma. 
\end{align*}
Then, we have
\begin{align*}
    \Pr\left[\lambda_{\max}\left(\sum_{i=1}^n \mathsf{x_i}\right)\leq \sigma\cdot \delta(\epsilon/\sigma, n, r)\right]>0.
\end{align*}
\end{theorem}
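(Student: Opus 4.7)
The plan is to follow the compatible family framework of [b18] and change only the final step where the isotropic condition is substituted in. By the compatible family / expected polynomial argument of [b18], for any realization $v_1,\dots,v_n$ of the supports, the value $\lambda_{\max}(\sum_i \mathsf{x}_i)$ is bounded above (for at least one realization) by the largest root of the expected polynomial $\E[h(te - \sum_i \mathsf{x}_i)]$, which in turn equals the largest root of the univariate restriction $t \mapsto h[v_1,\dots,v_n](te + \mathbf{1})$ of the mixed hyperbolic polynomial $h[v_1,\dots,v_n](x,y) := \prod_{i=1}^n (1 - y_i D_{v_i}) h(x)$. By Gårding's theorem (Theorem~\ref{thm:hyperbolic_cone_gar}) applied to this mixed polynomial, that largest root equals the minimum $\rho > 0$ such that $\rho e + \mathbf{1} \in \Gamma_+$, where $\Gamma_+$ is the hyperbolic cone of $h[v_1,\dots,v_n]$.

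Next, I would invoke the inductive "add one vector at a time" argument from [b18], which iteratively places feasible points into $\Gamma_+$ parametrized by pairs $(\delta,\mu) \in U_r$. The crucial observation is that the bound they extract already proves the cone membership
\begin{align*}
\frac{\epsilon \mu \, e + (1 - \tfrac{1}{n}) \delta \sum_{i=1}^n v_i}{1 + \tfrac{\mu - 1}{n}} + \mathbf{1} \in \Gamma_+,
\end{align*}
for every $(\delta,\mu) \in U_r$ and every choice of $v_1,\dots,v_n \in \Lambda_+$ with $\tr[v_i] \leq \epsilon$ and $\rank(v_i) \leq r$; the isotropic identity $\sum_i v_i = e$ is only used in the very last line of [b18] to simplify the numerator. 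Our relaxation replaces that simplification with a convexity argument: under the assumption $\|\sum_i v_i\|_h \leq \sigma$, Fact~\ref{fac:eigen_linear_trans} gives $\lambda_j(\sigma e - \sum_i v_i) = \sigma - \lambda_{d-j+1}(\sum_i v_i) \geq 0$, so $\sigma e - \sum_i v_i \in \Lambda_+$.

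I would then translate this into $\Gamma_+$ by noting that, embedding $w \mapsto (w,0)$ from the $x$-coordinates of the mixed polynomial, the ray $\{(\alpha e, 0) : \alpha \geq 0\}$ lies in $\overline{\Gamma_+}$ (this follows from the construction of $h[v_1,\dots,v_n]$: setting $y = 0$ collapses it to a power of $h$ and the hyperbolic direction of $h$ sits inside that of the mixed polynomial). Since $\Gamma_+$ is a convex cone (Theorem~\ref{thm:hyperbolic_cone_gar}), adding the element $\bigl((1-\tfrac{1}{n})\delta(\sigma e - \sum_i v_i)/(1 + \tfrac{\mu-1}{n}),\, 0\bigr)$ to the feasible point above preserves membership in $\Gamma_+$, yielding
\begin{align*}
\frac{\epsilon \mu + (1 - \tfrac{1}{n}) \delta \sigma}{1 + \tfrac{\mu - 1}{n}} \cdot e + \mathbf{1} \in \Gamma_+.
\end{align*}
Therefore $\rho$ is at most $\frac{\epsilon \mu + (1-\tfrac{1}{n})\delta\sigma}{1 + \tfrac{\mu-1}{n}}$ for every $(\delta,\mu) \in U_r$, and taking the infimum gives $\rho \leq \sigma \cdot \delta(\epsilon/\sigma, n, r)$ after the rescaling $\epsilon \mapsto \epsilon/\sigma$ inside the definition of $\delta(\cdot,n,r)$.

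The main obstacle I expect is the cone-containment claim that translating a point of $\Gamma_+$ by $(\Lambda_+ \times \{0\})$-elements preserves membership in $\Gamma_+$. In [b18] this question is vacuous because the isotropic assumption forces $\sum_i v_i = e$ so no translation is needed; here I must verify it cleanly, either by directly examining the hyperbolic direction of $h[v_1,\dots,v_n]$ (showing $(e,0)$ or a suitable interior point sits in the right place) or by carefully augmenting the inductive construction of $\Gamma_+$ in [b18] so that the slack vector $\sigma e - \sum_i v_i$ is absorbed at each step rather than at the end. Apart from this verification, all remaining steps are bookkeeping, and the final bound quoted in Remark~\ref{rmk:bound_inf} follows immediately from the estimate on $\delta(\cdot,\infty,\infty)$ already recorded in [b18].
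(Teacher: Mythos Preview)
Your approach matches the paper's exactly: reduce via the compatible family argument and Fact~\ref{fac:affine_linear} to bounding the largest root of $h[\E[\mathsf{x}_1],\dots,\E[\mathsf{x}_n]](t\ov{e}+\underline{\mathbf{1}})$, then use the cone-membership inequality from Corollary~\ref{cor:branden_5.5} together with convexity of $\Gamma_+$ to replace $\sum_i v_i$ by $\sigma e$. The obstacle you flag is dispatched in the paper by simply citing the fact (already recorded from \cite{b18}) that the hyperbolic cone of $h[v_1,\dots,v_n]$ contains $\Lambda_{++}\times\R^n_{\leq 0}$, so $\ov{e}\in\Gamma_{++}$ and hence $\ov{\sigma e-\sum_i v_i}\in\Gamma_+$; adding this element to the feasible point is then immediate by convexity of $\Gamma_+$, exactly as you outline.
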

\begin{proof}
Let $V_i$ be the support of $\mathsf{x}_i$ for $i\in [n]$. By Theorem~\ref{thm:compatible}, the family $\{h[v_1,\dots,v_m](t\ov{e}+\underline{\mathbf{1}})\}_{v_i\in V_i}$ is compatible, where $t\ov{e}+\underline{\mathbf{1}} = \begin{bmatrix}te \\ \mathbf{1}\end{bmatrix}\in \R^{n+m}$

By Theorem~\ref{thm:compatible_root_bound}, there exists $(v_1^*,\dots,v_n^*)\in V_1\times \cdots \times V_n$ with nonzero probability, such that the largest root of $h[v_1^*,\dots,v_n^*](t\ov{e}+\underline{\mathbf{1}})$ is at most the largest root of $\E[h[\mathsf{x}_1,\dots,\mathsf{x}_n]]$.

By Fact~\ref{fac:affine_linear}, $\E[h[\mathsf{x}_1,\dots,\mathsf{x}_n]] = h[\E[\mathsf{x}_1],\dots,\E[\mathsf{x}_n]]$. Let $\lambda_{\max}(v_1,\dots,v_n)$ denote the largest root of $h[v_1,\dots,v_m](t\ov{e}+\underline{\mathbf{1}})$. Then, we have
\begin{align*}
    \lambda_{\max}(\E[\mathsf{x}_1],\dots,\E[\mathsf{x}_n])\geq ~ \lambda_{\max}(v_1^*,\dots,v_n^*)\geq~ \lambda_{\max}(v_1^*+\cdots +v_n^*),
\end{align*}
where the second step follows from Theorem~\ref{thm:sum_root_bound_mix}.

It is easy to verify that $\E[\mathsf{x}_1],\dots, \E[\mathsf{x}_n]$ satisfy the conditions in Theorem~\ref{thm:mix_hyper_root}. Thus, by Theorem~\ref{thm:mix_hyper_root}, we get that
\begin{align*}
    \lambda_{\max}(v_1^*+\cdots +v_n^*) \leq ~ \lambda_{\max}(\E[\mathsf{x}_1],\dots,\E[\mathsf{x}_n]) \leq ~ \sigma\cdot \delta(\epsilon/\sigma, n, r),
\end{align*}
which completes the proof.
\end{proof}

Similar to Corollary~\ref{cor:branden_discrepancy}, Theorem~\ref{thm:branden_subisotropic} also implies the following discrepancy result for vectors in sub-isotropic position.
\begin{corollary}
Let $0<\epsilon\leq \frac{1}{2}$. Suppose $h\in \R[z_1,\dots,z_m]$ is hyperbolic with respect to $e\in \R^m$, and let $x_1, \dots , x_n\in \Lambda_+(h,e)$ that satisfy 
\begin{align*}
    \tr_h[x_i] \leq \epsilon, ~\text{and}\quad \left\|\sum_{i=1}^n x_i\right\|_h \leq \sigma. 
\end{align*}
Then, there exist signs $r\in \{-1,1\}^n$ such that
\begin{align*}
    \left\|\sum_{i=1}^n r_i x_i\right\|_h\leq 2\sqrt{\epsilon(2\sigma-\epsilon)}.
\end{align*}
\end{corollary}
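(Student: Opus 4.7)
The plan is to derive this directly from the $k=2$ case of Theorem~\ref{thm:branden_subisotropic}. Let $r := \max_i \rank(x_i)$. Applying that theorem produces a partition $S_1 \sqcup S_2 = [n]$ for which the block sums $A := \sum_{i \in S_1} x_i$ and $B := \sum_{i \in S_2} x_i$ satisfy $\|A\|_h,\, \|B\|_h \le \tfrac{\sigma}{2}\,\delta(2\epsilon/\sigma,\, n,\, 2r)$. Choose signs by setting $r_i = +1$ for $i \in S_1$ and $r_i = -1$ for $i \in S_2$, so that $\sum_{i=1}^n r_i x_i = A - B$.

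The next step converts the one-sided block bounds into a bound on $\|A-B\|_h$. Since $B \in \Lambda_+$ (as a conic combination of vectors in $\Lambda_+$), the identity $A - (A-B) = B \in \Lambda_+$ places $A - B \preceq A$ in the hyperbolic order, so $\lambda_{\max}(A-B) \le \lambda_{\max}(A) = \|A\|_h$. The symmetric argument gives $\lambda_{\max}(B-A) \le \|B\|_h$; combining with Fact~\ref{fac:eigen_linear_trans}, which identifies $-\lambda_{\min}(A-B) = \lambda_{\max}(B-A)$, we conclude
\[
\left\|\sum_{i=1}^n r_i x_i\right\|_h \;=\; \|A - B\|_h \;\le\; \max(\|A\|_h,\|B\|_h) \;\le\; \tfrac{\sigma}{2}\,\delta(2\epsilon/\sigma, n, 2r).
\]

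The final step is algebraic: show that $\tfrac{\sigma}{2}\,\delta(2\epsilon/\sigma, n, 2r) \le 2\sqrt{\epsilon(2\sigma-\epsilon)}$ under the hypothesis $\epsilon \le \tfrac12$. This is the main obstacle. The crude rank-infinity estimate from Remark~\ref{rmk:bound_inf} gives only $(\sqrt{\epsilon}+\sqrt{\sigma/2})^2$, which when $\sigma \gg \epsilon$ grows like $\sigma/2$ and is therefore \emph{not} dominated by $2\sqrt{\epsilon(2\sigma-\epsilon)} \sim 2\sqrt{2\epsilon\sigma}$. To close the gap, I would chase through the definition of $\delta$ and substitute a feasible pair $(\delta,\mu)\in U_{2r}$ tuned to this regime --- concretely, mimicking Br\"and\'en's rank-$2$ computation by trying $\delta - 1 = \sqrt{\epsilon/(\sigma-\epsilon)}$ and $\mu = 1 + \sqrt{(\sigma-\epsilon)/\epsilon}$, then using $\epsilon\le\tfrac12$ to verify feasibility of the inequality in the definition of $U_{2r}$ and to absorb the quadratic correction $\epsilon^2$ into the final radical. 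Verifying that this choice of parameters is admissible and yields exactly the constant $2\sqrt{\epsilon(2\sigma-\epsilon)}$ is the delicate part; the rest of the argument is the structural reduction in the first two paragraphs.
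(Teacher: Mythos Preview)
Your reduction in the first two paragraphs is correct, but the third step cannot be completed---and the obstruction is not in the algebra of $\delta(\cdot,\cdot,\cdot)$, it is already in your step~2. By passing to the one-sided estimate $\|A-B\|_h \le \max(\|A\|_h,\|B\|_h)$ you have discarded the hypothesis $\|A+B\|_h\le\sigma$, and that information is essential. To see that no sharpening of the Kadison--Singer bound can rescue the argument, take the degree-$1$ polynomial $h(x)=x$ on $\R$ with $x_1=\dots=x_n=\epsilon$ and $n\epsilon=\sigma$. Every partition $S_1\sqcup S_2=[n]$ has $\max(\|A\|_h,\|B\|_h)=\max(|S_1|,|S_2|)\cdot\epsilon\ge \sigma/2$, so the best possible output of your step~2 is $\sigma/2$. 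Meanwhile $2\sqrt{\epsilon(2\sigma-\epsilon)}\sim 2\sqrt{2\epsilon\sigma}$ for $\epsilon\ll\sigma$, which is strictly smaller than $\sigma/2$ once $\epsilon<\sigma/32$. Hence the inequality you are trying to verify in step~3, namely $\tfrac{\sigma}{2}\delta(2\epsilon/\sigma,n,2r)\le 2\sqrt{\epsilon(2\sigma-\epsilon)}$, is simply false in this regime for any admissible $(\delta,\mu)\in U_{2r}$.

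The paper's route is different in exactly this respect: it keeps the crude rank-infinity bound $c:=(\sqrt{\epsilon}+\sqrt{\sigma/2})^2$ on each block, but then \emph{combines} that with the assumption $\|A+B\|_h\le\sigma$ to produce a subtraction, writing $\|A-B\|_h\le 2c-\sigma=2\epsilon+2\sqrt{2\epsilon\sigma}$ (the displayed line in the paper has a sign typo and an incorrect equality, but this is the intended shape of the argument, mirroring the isotropic case where $A+B=e$ forces two-sided control on each $\lambda_i(A)$). The moral is that you should not try to make the Kadison--Singer bound on $\|A\|_h$ small enough on its own; instead, use the extra constraint on $A+B$ to cancel the dominant $\sigma/2$ contribution in $c$, after which only the cross term of order $\sqrt{\epsilon\sigma}$ survives.
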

\begin{proof}
By Theorem~\ref{thm:branden_subisotropic} with $k=2$ and the upper bound in Remark~\ref{rmk:bound_inf}, there exists a set $S\subseteq [n]$ such that
\begin{align*}
    \Big\|\sum_{i\in S} x_i\Big\|_h \leq (\sqrt{\epsilon} + \sqrt{\sigma/2})^2, ~~\text{and}~~\Big\|\sum_{i\not\in S} x_i\Big\|_h \leq (\sqrt{\epsilon} + \sqrt{\sigma/2})^2.
\end{align*}
Since we know that $\|\sum_{i=1}^n x_i\|_h \leq \sigma$, we get that
\begin{align*}
    \Big\|\sum_{i\in S} - \sum_{i\not\in S} x_i\Big\|_h\leq \sigma - 2(\sqrt{\epsilon} + \sqrt{\sigma/2})^2 = 2\sqrt{\epsilon(2\sigma-\epsilon)}.
\end{align*}
By assigning $r_i=1$ for $i\in S$ and $r_i=-1$ for $i\notin S$, we complete the proof of the corollary.
\end{proof}

\subsubsection{Technical tools in previous work}
In this section, we provide some necessary definitions and technical tools we used in \cite{b18}.
\begin{definition}[Directional derivative]
Let $h\in \R[x_1,\dots,x_m]$. The directional derivative of $h(x)$ with respect to $v\in \R^m$ is defined as
\begin{align*}
    D_vh(x) := \sum_{i=1}^m v_i \cdot \frac{\partial h}{\partial x_i} (x).
\end{align*}
\end{definition}

The following fact shows the relation between directional derivative and the usual derivative.
\begin{fact}\label{fac:dir_derivate}
For any polynomial $h(x)$ and any vector $v\in \R^m$, we have
\begin{align*}
    D_vh(x+tv)= \frac{\d}{\d t} h(x + tv).
\end{align*}
\end{fact}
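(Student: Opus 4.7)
The plan is to prove this fact by a direct application of the multivariable chain rule. The identity relates the directional derivative evaluated at the shifted point $x + tv$ with the ordinary one-variable derivative of the restriction of $h$ to the line $\{x + tv : t \in \R\}$, so the chain rule is the natural tool.

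First, I would write out the right-hand side using the chain rule. Setting $\gamma(t) := x + tv \in \R^m$, each coordinate $\gamma_i(t) = x_i + tv_i$ satisfies $\frac{\d}{\d t}\gamma_i(t) = v_i$. The chain rule then gives
\[
\frac{\d}{\d t} h(x + tv) \;=\; \sum_{i=1}^m \frac{\partial h}{\partial x_i}(x + tv)\cdot \frac{\d}{\d t}(x_i + tv_i) \;=\; \sum_{i=1}^m v_i \cdot \frac{\partial h}{\partial x_i}(x + tv).
\]
Next, I would recognize the right-hand side as the definition of the directional derivative $D_v h$ evaluated at the point $x + tv$, namely $D_v h(x + tv) = \sum_{i=1}^m v_i\cdot \frac{\partial h}{\partial x_i}(x+tv)$. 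Matching the two expressions completes the proof.

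There is no real obstacle here: the statement is essentially a restatement of the chain rule specialized to affine lines, and since $h$ is a polynomial all partial derivatives exist and are continuous, so the chain rule applies without any regularity concerns. The only care needed is to be explicit that the partial derivatives on the right-hand side are evaluated at the shifted point $x + tv$ (not at $x$), which is precisely why the identity holds for all $t$ and not only at $t = 0$.
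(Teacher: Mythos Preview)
Your proposal is correct. The paper does not actually supply a proof of this fact; it is stated as a standalone Fact without justification, presumably because it is an immediate consequence of the chain rule exactly as you have written. Your argument is the standard one and nothing more is needed.
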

If $h$ is a hyperbolic polynomial, then the directional derivative is related to the hyperbolic trace:
\begin{fact}\label{fac:d_trace}
If $h$ is hyperbolic with respect to $e\in \R^m$, then for any $v\in \R^m$, we have
\begin{align*}
    \tr_h[v] = \frac{D_v h(e)}{h(e)}.
\end{align*}
\end{fact}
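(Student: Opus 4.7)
The plan is to expand $h(te + v)$ as a polynomial in $t$ in two different ways and compare the coefficients of $t^{d-1}$.

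First, I would invoke Observation~\ref{obs:symmetric} (equivalently Fact~\ref{fac:hyper_factor} applied to $-v$) to write
\begin{align*}
    h(te + v) \;=\; h(e) \cdot \prod_{i=1}^{d} (t + \lambda_i(v)) \;=\; h(e) \cdot t^{d} \;+\; h(e) \cdot \Big(\sum_{i=1}^{d} \lambda_i(v)\Big) \cdot t^{d-1} \;+\; \cdots,
\end{align*}
so that the coefficient of $t^{d-1}$ is exactly $h(e) \cdot \tr[v]$ by the definition of the hyperbolic trace.

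Next, I would exploit the homogeneity of $h$ (degree $d$) to re-express the same quantity in terms of the directional derivative. Specifically, for $t \neq 0$,
\begin{align*}
    h(te + v) \;=\; t^{d}\cdot h\!\left(e + \tfrac{1}{t}\, v\right),
\end{align*}
and then Taylor-expand the inner factor around $e$ using Fact~\ref{fac:dir_derivate}:
\begin{align*}
    h\!\left(e + \tfrac{1}{t}\, v\right) \;=\; h(e) \;+\; \tfrac{1}{t}\, D_v h(e) \;+\; O(1/t^{2}).
\end{align*}
Multiplying through by $t^{d}$ gives $h(te+v) = h(e)\, t^{d} + D_v h(e)\, t^{d-1} + O(t^{d-2})$, so the coefficient of $t^{d-1}$ equals $D_v h(e)$.

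Equating the two expressions for the coefficient of $t^{d-1}$ yields $h(e)\cdot \tr[v] = D_v h(e)$, and dividing by $h(e) \neq 0$ (which holds by the definition of hyperbolicity with respect to $e$) gives the claim. There is no real obstacle here; the only subtlety is verifying the Taylor expansion in $1/t$ rigorously — this is fine because $h$ is a polynomial, so $h(e + sv)$ is a polynomial in $s$ whose linear coefficient at $s=0$ is $D_v h(e)$ by definition, and substituting $s = 1/t$ is legitimate for all $t \neq 0$ since we are comparing two polynomials in $t$ that agree on a cofinite set.
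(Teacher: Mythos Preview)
Your argument is correct. The paper states Fact~\ref{fac:d_trace} without proof, so there is nothing to compare against directly; your derivation is the standard one and is essentially the $i=1$ case of Observation~\ref{obs:symmetric} (reading off the coefficient of $t^{d-1}$ in $h(te+v)$), so it fits seamlessly with the paper's framework.
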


\begin{definition}[Mixed hyperbolic polynomial]\label{def:mixed_hyperbolic}
If $h(x)\in \R[x_1,\dots,x_m]$ is a hyperbolic polynomial with respect to $e\in \R^m$, and $v_1,\dots,v_n\in \Lambda_+$, then the mixed hyperbolic polynomial $h[v_1,\dots,v_m]\in \R[x_1,\dots,x_m,y_1,\dots,y_n]$ is defined as
\begin{align*}
    h[v_1,\dots,v_n]:=\prod_{i=1}^m (1-y_i D_{v_i}) h(x).
\end{align*}
\end{definition}

Br{\"a}nd{\'e}n \cite{b18} proved that $h[v_1,\dots,v_n]$ is also hyperbolic with the hyperbolic cone containing $\Lambda_{++}\times \R^n_{\leq 0}$. In our proof, we will also use the following fact, which can be easily proved by showing that $h[v_1,\dots,v_n]$ is affine linear in each coordinate. 

\begin{fact}\label{fac:affine_linear}
Let $\mathsf{x}_1,\dots,\mathsf{x}_n$ be independent random variables in $\R^m$. Then,
\begin{align*}
    \E[h[\mathsf{x}_1,\dots,\mathsf{x}_n]] = h[\E[\mathsf{x}_1],\dots,\E[\mathsf{x}_n]].
\end{align*}
\end{fact}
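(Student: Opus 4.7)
The plan is to reduce the identity to an elementary multilinearity argument, which then combines cleanly with independence of the $\mathsf{x}_i$'s. The key structural observation I would first make explicit is that, as the hint in the paper indicates, $h[v_1,\dots,v_n]$ is affine linear in each $v_i$ separately, so in fact it is affine multilinear in the tuple $(v_1,\dots,v_n)$: each $v_i$ enters only through a single application of the operator $D_{v_i}$.

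Concretely, I would first note that the operators $D_{v_i} = \sum_j (v_i)_j \partial_{x_j}$ are constant-coefficient differential operators in $x$, so they commute with one another. This lets me expand the product unambiguously as
\begin{align*}
h[v_1,\dots,v_n] \;=\; \prod_{i=1}^n (1 - y_i D_{v_i})\, h(x) \;=\; \sum_{S \subseteq [n]} (-1)^{|S|} \Big(\prod_{i \in S} y_i\Big)\, \Big(\prod_{i \in S} D_{v_i}\Big) h(x).
\end{align*}
Since $D_v$ is linear in $v$, I can expand further as
\begin{align*}
\Big(\prod_{i \in S} D_{v_i}\Big) h(x) \;=\; \sum_{\alpha : S \to [m]} \Big(\prod_{i \in S} (v_i)_{\alpha(i)}\Big) \cdot \Big(\prod_{i \in S} \partial_{x_{\alpha(i)}}\Big) h(x),
\end{align*}
which exhibits the multilinear dependence on $(v_i)_{i \in S}$ explicitly.

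Finally, I would apply linearity of expectation to the signed sum over $S$ and, for each $S$ and each multi-index $\alpha$, invoke independence of $\mathsf{x}_1,\dots,\mathsf{x}_n$ to obtain
\begin{align*}
\E\Big[\prod_{i \in S} (\mathsf{x}_i)_{\alpha(i)}\Big] \;=\; \prod_{i \in S} \E[(\mathsf{x}_i)_{\alpha(i)}] \;=\; \prod_{i \in S} (\E[\mathsf{x}_i])_{\alpha(i)}.
\end{align*}
Plugging this back, the expectation of each block $\prod_{i \in S} D_{\mathsf{x}_i} h(x)$ equals $\prod_{i \in S} D_{\E[\mathsf{x}_i]} h(x)$, and re-assembling the signed sum reconstructs $h[\E[\mathsf{x}_1],\dots,\E[\mathsf{x}_n]]$ on the right-hand side. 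There is no serious obstacle; the only point that merits a line of care is justifying commutativity of the $D_{v_i}$'s so that the product expansion is well-defined and each $v_i$ truly appears at most once per monomial.
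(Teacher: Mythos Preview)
Your proof is correct and is precisely the argument the paper has in mind: the paper only states that the fact follows because $h[v_1,\dots,v_n]$ is affine linear in each $v_i$, and you have simply made that affine multilinearity explicit via the expansion of $\prod_i(1-y_iD_{v_i})$ and then applied independence. There is nothing more to add.
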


Br{\"a}nd{\'e}n \cite{b18} also defined the compatible family of polynomials, which is a sub-class of interlacing family of polynomials in \cite{mss15,mss18}.
\begin{definition}[Compatible family of polynomials]\label{def:compatible}
Let $S_1,\dots,S_n$ be finite sets. A family of polynomials 
\begin{align*}
    {\cal F}=\{f(S;t)\}_{S\in S_1\times \cdots \times S_n}\subset \R[t]
\end{align*}
is called compatible if the following properties hold:
\begin{itemize}
    \item all the nonzero members of F have the same degree and the same signs of their leading coefficients, and
    \item for all choices of independent random variables $\mathsf{x}_1\in S_1,\dots,\mathsf{x}_n\in S_n$, the polynomial
    \begin{align*}
        \E[f(\mathsf{x}_1,\dots,\mathsf{x}_n; t)]
    \end{align*}
    is real-rooted.
\end{itemize}
\end{definition}

The following theorem characterizes the largest root of the expectation polynomial in the compatible family, which is very similar to the result for interlacing family \cite{mss15}.
\begin{theorem}[Theorem 2.3 in \cite{b18}]\label{thm:compatible_root_bound}
Let $\{f(S;t)\}_{S\in S_1\times \cdots \times S_n}$ be a compatible family, and let $\mathsf{x}_1\in S_1,\dots,\mathsf{x}_n\in S_n$ be independent random variables such that $\E[f(\mathsf{x}_1,\dots,\mathsf{x}_n)]\not \equiv 0$. 

Then there is a tuple $S = (s_1,\dots, s_n) \in S_1\times \cdots \times S_n$, with $\Pr[\mathsf{x}_i = s_i] > 0$ for all $i\in [n]$, such that the largest root of $f(s_1,\cdots, s_n;t)$ is smaller or equal to the largest root of $\E[f(\mathsf{x}_1,\cdots,\mathsf{x}_n;t)]$.

\end{theorem}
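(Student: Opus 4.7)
The plan is to prove the theorem by induction on $n$, exploiting that the compatibility hypothesis is preserved under conditioning on deterministic values of any subset of the $\mathsf{x}_i$ (take the distribution of the conditioned variable to be a Dirac mass in the compatibility definition). Write $\lambda := \lambda_{\max}(\E[f(\mathsf{x}_1,\dots,\mathsf{x}_n;t)])$ for the quantity to be dominated.

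The engine of the induction is the following one-step lemma that I would establish first: let $q_1,\dots,q_k$ be polynomials of common degree with a common sign of leading coefficient, let $w_1,\dots,w_k\ge 0$ with $\sum_i w_i>0$, and assume $q(t):=\sum_i w_i q_i(t)$ is real-rooted. Then there is some $i^*$ with $w_{i^*}>0$ and $\lambda_{\max}(q_{i^*})\le \lambda_{\max}(q)$, with the convention that $\lambda_{\max}(q_i)=-\infty$ when $q_i$ has no real root. My plan is to invoke the classical Obreschkoff-type characterization that a finite family of polynomials admits a common interlacer if and only if every nonnegative convex combination is real-rooted, and then use that in the presence of a common interlacer the largest root of any convex combination is at least $\min_i \lambda_{\max}(q_i)$.

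For the inductive step, for each $s_1\in S_1$ I form the marginal polynomial
\[
g_{s_1}(t)\;:=\;\E_{\mathsf{x}_2,\dots,\mathsf{x}_n}\bigl[f(s_1,\mathsf{x}_2,\dots,\mathsf{x}_n;t)\bigr].
\]
Substituting the Dirac mass at $s_1$ for $\mathsf{x}_1$ in the compatibility condition shows that $g_{s_1}$ is real-rooted and inherits the common degree and leading-coefficient sign of the family. Since $\E_{\mathsf{x}_1}[g_{\mathsf{x}_1}(t)] = \E[f(\mathsf{x}_1,\dots,\mathsf{x}_n;t)]$ has largest root $\lambda$, the one-step lemma applied to $\{g_{s_1}\}_{s_1\in S_1}$ with weights $\Pr[\mathsf{x}_1=s_1]$ produces $s_1^*\in S_1$ with $\Pr[\mathsf{x}_1=s_1^*]>0$ and $\lambda_{\max}(g_{s_1^*})\le \lambda$. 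The restricted family $\{f(s_1^*,s_2,\dots,s_n;t)\}_{(s_2,\dots,s_n)}$ is itself compatible (again by Dirac substitution for $\mathsf{x}_1$), so the inductive hypothesis on $n-1$ variables, applied with $\lambda_{\max}(g_{s_1^*})$ in place of $\lambda$, yields $s_2^*,\dots,s_n^*$ with the demanded positive-probability property and $\lambda_{\max}(f(s_1^*,\dots,s_n^*;t))\le \lambda_{\max}(g_{s_1^*})\le \lambda$. Independence of the $\mathsf{x}_i$'s converts the conditional positive probabilities back into the marginal ones required by the theorem statement.

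The main obstacle is the one-step lemma, and specifically the step that promotes the pointwise observation ``$q(t)>0$ for all $t>\lambda_{\max}(q)$ forces some $q_{i(t)}(t)>0$'' to the uniform statement that a single index $i^*$ works simultaneously for every $t>\lambda_{\max}(q)$. Without extra structure the witnessing index $i(t)$ could depend on $t$; the Obreschkoff-type common-interlacer characterization is exactly what rules this out, and verifying (or importing) it under the slightly nonstandard ``compatible family'' formulation used here is the delicate technical point. Once a common interlacer is in hand, the remaining bookkeeping is a standard interlacing-family argument \emph{à la} Marcus--Spielman--Srivastava.
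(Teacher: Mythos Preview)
The paper does not give its own proof of this statement: it is quoted verbatim as Theorem~2.3 of Br\"and\'en~\cite{b18} in the subsection ``Technical tools in previous work'' and used as a black box. So there is no in-paper proof to compare against.

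Your outline is the standard (and correct) argument, essentially the interlacing-family strategy of Marcus--Spielman--Srivastava recast in Br\"and\'en's ``compatible'' language. Two small points to watch when you write it out. First, the compatibility definition only requires the \emph{nonzero} members of the family to share degree and leading-coefficient sign, so some $f(S;\cdot)$ may be identically zero; consequently some marginals $g_{s_1}$ may be identically zero. You need the one-step lemma to output an $s_1^*$ with $g_{s_1^*}\not\equiv 0$ (otherwise the inductive hypothesis, which requires $\E[f]\not\equiv 0$, cannot be invoked). This is fine because $\sum_{s_1}\Pr[\mathsf{x}_1=s_1]\,g_{s_1}=\E[f]\not\equiv 0$ forces some positively-weighted $g_{s_1}$ to be nonzero, and the zero ones can simply be discarded before applying the common-interlacer machinery. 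Second, to feed the Obreschkoff/Chudnovsky--Seymour characterization you need real-rootedness of \emph{every} convex combination of the $g_{s_1}$, not just the one with the given weights $\Pr[\mathsf{x}_1=s_1]$; you get this by replacing the distribution of $\mathsf{x}_1$ by an arbitrary one on $S_1$ in the compatibility hypothesis, exactly as you say via Dirac substitution but now with a general distribution. With those two details made explicit, your proof goes through and matches what Br\"and\'en does.
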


The theorem below shows that mixed hyperbolic polynomials form a compatible family.
\begin{theorem}[Theorem 3.5 in \cite{b18}]\label{thm:compatible}
Let $h(x)$ be hyperbolic with respect to $e\in \R^m$, and let $V_1,\dots,V_n$ be finite sets of vectors in $\Lambda_+$. Let $w\in \R^{m+n}$. For $V=(v_1,\dots,v_n)\in V_1\times \cdots \times V_n$, define
\begin{align*}
    f(V;t):=h[v_1,\dots,v_n](t\ov{e}+w),
\end{align*}
where $\ov{e}:=\begin{bmatrix}e \\ 0\end{bmatrix}\in \R^{n+m}$.
Then, $\{f(V;t)\}_{V\in V_1\times \cdots \times V_n}$ is a compatible family.
\end{theorem}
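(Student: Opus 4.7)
The plan is to verify both conditions of Definition~\ref{def:compatible}. I would first handle the common-degree and common-sign condition by direct expansion. Using commutativity of partial derivatives, we may write
\begin{align*}
    h[v_1,\dots,v_n](x,y) \;=\; \sum_{S \subseteq [n]} (-1)^{|S|} \Big(\prod_{i \in S} y_i\Big) \Big(\prod_{i \in S} D_{v_i}\Big) h(x).
\end{align*}
Since $h$ is homogeneous of degree $d$, $(\prod_{i \in S} D_{v_i}) h(x)$ is homogeneous of degree $d - |S|$ in $x$. Substituting $x = te + w_x$ and $y = w_y$ where $w = (w_x, w_y)$, only the $S = \emptyset$ term contributes to $t^d$, and that contribution is $t^d h(e)$. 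Hence every $f(V;t)$ is nonzero of degree exactly $d$ with leading coefficient $h(e)$, independent of $V$, yielding condition (i) with common leading sign $\operatorname{sign}(h(e))$.

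For condition (ii), let $\mathsf{x}_1,\dots,\mathsf{x}_n$ be independent random variables with $\mathsf{x}_i \in V_i$. By Fact~\ref{fac:affine_linear},
\begin{align*}
    \E[f(\mathsf{x}_1,\dots,\mathsf{x}_n; t)] \;=\; h[u_1,\dots,u_n](t\ov{e} + w), \qquad u_i := \E[\mathsf{x}_i].
\end{align*}
Each $u_i$ lies in $\Lambda_+$ since $V_i \subseteq \Lambda_+$ and $\Lambda_+$ is convex (Theorem~\ref{thm:hyperbolic_cone_gar}). Invoking Br\"and\'en's structural theorem (quoted in the excerpt just after the definition of $h[v_1,\dots,v_n]$), $h[u_1,\dots,u_n]$ is hyperbolic on $\R^{m+n}$ with hyperbolicity cone containing $\Lambda_{++} \times \R^n_{\leq 0}$. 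The direction $\ov{e} = (e, 0)$ can lie only on the boundary of this cone, so I would perturb: set $\ov{e}_\epsilon := (e, -\epsilon \mathbf{1})$ for small $\epsilon > 0$, which lies in the open set $\Lambda_{++} \times \R^n_{<0}$ contained in the interior of the hyperbolicity cone. By Theorem~\ref{thm:hyperbolic_cone_gar}, $h[u_1,\dots,u_n]$ is hyperbolic in direction $\ov{e}_\epsilon$, so $h[u_1,\dots,u_n](t\ov{e}_\epsilon + w)$ is real-rooted in $t$ for each $\epsilon > 0$. The expansion above (applied to $h[u_1,\dots,u_n]$) shows that all these polynomials have degree $d$ with leading coefficient $h(e) + O(\epsilon)$; as $\epsilon \to 0^+$ the coefficients converge uniformly and the degree is preserved, so Hurwitz's theorem on root continuity forces the limit $h[u_1,\dots,u_n](t\ov{e} + w)$ to be real-rooted as well.

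The main obstacle is Br\"and\'en's structural theorem, which the reduction above uses as a black box. Its proof proceeds by induction on $n$: the base case requires showing that the single-operator lift $h(x) \mapsto (1 - y D_v) h(x)$ preserves hyperbolicity when $v \in \Lambda_+$, extending the cone from $\Lambda_{++} \subseteq \R^m$ to $\Lambda_{++} \times \R_{\leq 0} \subseteq \R^{m+1}$. This rests on the classical G\aa rding/Rolle fact that $D_v h$ is hyperbolic with respect to $e$ for $v \in \Lambda_+(e)$, with hyperbolicity cone at least $\Lambda_+(e)$, together with an interlacing analysis of the roots of $h(x) - y D_v h(x)$ in the extra variable $y$. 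Iterating the lift $n$ times gives hyperbolicity of the full mixed polynomial $h[u_1,\dots,u_n]$, completing the argument.
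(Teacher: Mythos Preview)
The paper does not prove this statement; it is quoted as Theorem~3.5 from \cite{b18} and used as a black box. So there is no ``paper's own proof'' to compare against here.

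Your argument is correct. One small simplification: the perturbation step is not actually needed. You already computed $h[u_1,\dots,u_n](\ov{e}) = h(e) \neq 0$. Since $h[u_1,\dots,u_n]$ is homogeneous and hyperbolic, every point on the boundary $\Gamma_+ \setminus \Gamma_{++}$ of its hyperbolicity cone is a zero of the polynomial (boundary points have $\lambda_{\min}=0$, and the polynomial equals the product of the eigenvalues up to a scalar). Hence $\ov{e} \in \Gamma_+$ together with $h[u_1,\dots,u_n](\ov{e}) \neq 0$ forces $\ov{e} \in \Gamma_{++}$, so $h[u_1,\dots,u_n]$ is hyperbolic in direction $\ov{e}$ directly and $t \mapsto h[u_1,\dots,u_n](t\ov{e}+w)$ is real-rooted without passing to a limit. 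Your Hurwitz argument is of course also valid.
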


Let $\lambda_{\max}(v_1,\dots,v_n)$ denote the largest root of the mixed hyperbolic polynomial $h[v_1,\dots,v_m](t\ov{e} + \underline{\mathbf{1}})\in \R[t]$, i.e.,
\begin{align}\label{eq:def_lambda_mix}
    \lambda_{\max}(v_1,\dots,v_n):=\lambda_{\max}(h[v_1,\dots,v_m](t\ov{e} + \underline{\mathbf{1}}))
\end{align}
The following theorem shows that $\lambda_{\max}(v_1,\dots,v_n)$ can upper-bounds the largest hyperbolic eigenvalue of the vector $v_1+\cdots +v_n$.

\begin{theorem}[Theorem 5.2 in \cite{b18}]\label{thm:sum_root_bound_mix}
If $h$ is hyperbolic with respect to $e$ and $v_1,\dots,v_n\in \Lambda_+(e)$, then
\begin{align*}
    \lambda_{\max}(v_1+\cdots+v_n)\leq \lambda_{\max}(v_1,\dots,v_n).
\end{align*}
\end{theorem}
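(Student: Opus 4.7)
Let $\rho := \lambda_{\max}(v_1,\dots,v_n)$, which by definition is the largest root of the univariate polynomial $p(t) := h[v_1,\dots,v_n](t\overline{e}+\underline{\mathbf{1}})$. By Theorem~\ref{thm:hyperbolic_cone_gar}, this coincides with the smallest real number for which $(\rho e,\mathbf{1})$ belongs to $\Gamma_+ := \Lambda_+(h[v_1,\dots,v_n], \overline{e})$. The same characterization applied to $h$ and $e$ tells us that the target inequality $\lambda_{\max}(v_1+\cdots+v_n) \leq \rho$ is equivalent to the membership $\rho e - (v_1+\cdots+v_n) \in \Lambda_+(h)$. The plan is therefore to reduce the theorem to the following \emph{cone-projection lemma}: for every $(x, y) \in \Gamma_+$, the shifted vector $x - \sum_{i=1}^n y_i v_i$ lies in $\Lambda_+(h)$. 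Granting the lemma, instantiating it at $(x, y) = (\rho e, \mathbf{1})$ finishes the proof.

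I will prove the cone-projection lemma by induction on $n$. The base case $n = 0$ is immediate since then $h[v_1,\dots,v_n]=h$ and $\Gamma_+=\Lambda_+(h)$. For the inductive step, set $H' := h[v_1,\dots,v_{n-1}]$; because the operators $D_{v_i}$ commute, $H := h[v_1,\dots,v_n] = (1 - y_n D_{v_n}) H'$. The key intermediate statement is the following shift identity, which I abbreviate (S): \emph{if $(x, y_1, \dots, y_n) \in \Gamma_+(H)$, then $(x - y_n v_n, y_1, \dots, y_{n-1}) \in \Gamma_+(H')$.} Granted (S), the inductive hypothesis applied to $H'$ and the shifted vector yields $(x - y_n v_n) - \sum_{i=1}^{n-1} y_i v_i = x - \sum_{i=1}^n y_i v_i \in \Lambda_+(h)$, closing the induction and hence the theorem.

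\textbf{The main obstacle.} The hard step is (S). My approach is to analyze the univariate family
\[
    P_\beta(t) := H'(te - x + \beta v_n, -y_1, \dots, -y_{n-1}), \qquad \beta \in \R,
\]
which is real-rooted in $t$ for every $\beta$ by hyperbolicity of $H'$ with respect to $(e, 0_{n-1})$. Denote its roots by $\mu_1(\beta)\le\cdots\le\mu_d(\beta)$; these are exactly the eigenvalues with respect to $H'$ of the shifted vector at parameter $\beta = y_n$, so (S) is equivalent to the single inequality $\mu_1(y_n) \geq 0$. A first-order Taylor expansion of $P_\beta$ around $\beta = 0$ gives $P_0(t) + y_n \,\partial_\beta P_\beta(t)|_{\beta=0} = H'(te-x,-y_1,\dots,-y_{n-1}) + y_n D_{v_n} H'(te-x,-y_1,\dots,-y_{n-1})$, which is precisely $H(te-x,-y)$, and the hypothesis $(x,y)\in\Gamma_+(H)$ is exactly the statement that the roots of this first-order truncation are non-negative. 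The missing ingredient is upgrading this first-order information to non-negativity of the roots of the full polynomial $P_{y_n}(t)$. I plan to supply it via two structural facts. First, Br\"and\'en's containment $\Lambda_+(h)\times \R^{n-1}_{\le 0}\subset \Lambda_+(H')$ places $(v_n,0_{n-1})$ in $\Lambda_+(H')$, so a Renegar-type Rolle theorem for hyperbolic polynomials forces the roots of $D_{v_n}H'(te-x,-y_1,\dots,-y_{n-1})$ to interlace those of $H'(te-x,-y_1,\dots,-y_{n-1})$. Second, $\Phi(t,\beta):=P_\beta(t)$ is itself a bivariate hyperbolic polynomial in direction $(1,0)$, which constrains how the root curves $\beta\mapsto \mu_j(\beta)$ may evolve. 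A careful sign analysis along the homotopy $\beta\in[0,y_n]$, combined with the interlacing, will show that $\mu_1(\beta)$ remains non-negative throughout this interval and in particular $\mu_1(y_n)\geq 0$, completing the induction and the proof.
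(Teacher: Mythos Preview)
The paper does \emph{not} supply its own proof of this theorem: it is quoted verbatim as ``Theorem~5.2 in~\cite{b18}'' in the subsection \emph{Technical tools in previous work} and used as a black box in the proof of Theorem~\ref{thm:branden_6.1}. So there is no ``paper's proof'' to compare against; any assessment has to be of your argument on its own merits.

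Your reduction to the cone-projection lemma and the inductive scheme via the shift step~(S) is correct and is essentially the statement that needs to be proved. The gap is entirely in the justification of~(S). Your plan is to track the smallest root $\mu_1(\beta)$ of $P_\beta(t)=H'(te-x+\beta v_n,-y_{1:n-1})$ along $\beta\in[0,y_n]$ and argue it ``remains non-negative''. But $(v_n,0)\in\Lambda_+(H')$ means that as $\beta$ grows you are translating the point $(x,y_{1:n-1})$ in the direction $-(v_n,0)\in -\Lambda_+(H')$, and for any hyperbolic polynomial, subtracting a vector in the closed cone can only \emph{decrease} every eigenvalue (e.g.\ $\lambda_{\min}(w-u)\le\lambda_{\min}(w)$ for $u\in\Lambda_+$). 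Hence $\mu_1(\beta)$ is non\-increasing on $[0,y_n]$: the homotopy runs the wrong way. Even if you could establish $\mu_1(0)\ge 0$ (which is itself not given by the hypothesis---the assumption only controls the roots of the first-order truncation $P_0+y_n\partial_\beta P_\beta|_{\beta=0}$, not of $P_0$ itself), monotonicity would give $\mu_1(y_n)\le\mu_1(0)$, which is useless.

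What~(S) really demands is the pointwise comparison
\[
\text{smallest root of }P_{y_n}(t)\ \ge\ \text{smallest root of }\bigl(P_0(t)+y_n\,\partial_\beta P_\beta(t)\big|_{\beta=0}\bigr),
\]
i.e.\ that passing from the exact shift $H'(te-x+y_nv_n,\cdot)$ to its linearization $H'(te-x,\cdot)+y_nD_{v_n}H'(te-x,\cdot)$ can only push the bottom root \emph{down}. Neither interlacing of $D_{v_n}H'$ with $H'$ nor bivariate real-stability of $\Phi(t,\beta)$ yields this by a sign-tracking argument; it is precisely the substantive inequality Br\"and\'en proves (via a barrier/monotonicity-of-$\Phi$-functional argument in the spirit of \cite{mss15}). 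As written, your two ``structural facts'' do not close the gap, and the phrase ``a careful sign analysis'' is standing in for the actual work.
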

The following theorem shows a connection between the hyperbolic cone of $h$ and the hyperbolic cone of the mixed hyperbolic polynomial $h[v_1,\dots,v_n]$.
\begin{theorem}[Corollary 5.5 in \cite{b18}]\label{cor:branden_5.5}
Suppose $h$ is hyperbolic with respect to $e\in \R^m$, and let $\Gamma_+$ be the hyperbolic cone of $h[v_1,\dots,v_n]$, where $v_i\in \Lambda_+(e)$ and $1\leq \rank(v_i)\leq r_i$ for $i\in [m]$. Suppose $x\in \Lambda_{++}(e)$ be such that for $i\in [m]$, $\ov{x}+\mu_i\underline{e_i}\in \Gamma_+$ for any $\mu_i>0$. 

Then, for any $(\delta_i, \mu_i)\in U_{r_i}$ for $i\in [m]$,
\begin{align*}
    \ov{x}+\left(1-\frac{1}{m}\right)\sum_{i=1}^n\delta_i \ov{v_i} + \left(1-\frac{1}{m}\right)\underline{\mathbf{1}} + \frac{1}{m}\sum_{i=1}^n \mu_i \underline{e_i} \in \Gamma_+.
\end{align*}
\end{theorem}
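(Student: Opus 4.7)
The plan is to iterate a single-direction rank-$r$ shift lemma for the mixed polynomial $q := h[v_1,\dots,v_n]$, using the hyperbolicity of $q$ and convexity of $\Gamma_+$ (Theorem~\ref{thm:hyperbolic_cone_gar}) to combine the iterated increments.

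\textbf{Step 1 (Single-direction shift).} I would first establish the following lemma: if $w\in \Gamma_+$ satisfies $w+\mu\underline{e_i}\in \Gamma_+$ for every $\mu>0$, then for every $(\delta_i,\mu_i)\in U_{r_i}$,
\begin{align*}
    w + \delta_i\ov{v_i} + \mu_i\underline{e_i} \in \Gamma_+.
\end{align*}
This is a rank-$r$ barrier shift in the style of \cite{b18}. Its proof restricts $q$ to the 2-plane $\{w+t\ov{v_i}+s\underline{e_i} : t,s\in\R\}$ and tracks the barrier $\Phi(y) := D_{\underline{e_i}}q(y)/q(y)$ along this plane. Because $\rank(v_i)\le r_i$, only $r_i{+}1$ directional derivatives $D_{\ov{v_i}}^k q$ for $k=0,\dots,r_i$ contribute, reducing the barrier bookkeeping to a rational inequality in $(t,s)$ whose validity at $(\delta_i,\mu_i)$ is precisely the defining inequality of $U_{r_i}$.

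\textbf{Step 2 (Iteration).} Starting from $w_0 := \ov{x}$, I would apply Step 1 successively for $i=1,2,\dots,n$, at the $i$-th step shifting by
\begin{align*}
    \bigl(1-\tfrac{1}{m}\bigr)\delta_i\ov{v_i}+\bigl(1-\tfrac{1}{m}\bigr)\underline{e_i}+\tfrac{1}{m}\mu_i\underline{e_i}.
\end{align*}
The fractional weights are chosen so that the precondition of Step 1 is maintained at every stage: the initial recession property $\ov{x}+\mu_j\underline{e_j}\in \Gamma_+$ for all $\mu_j>0$ (from the hypothesis), combined with convexity of $\Gamma_+$ and the fact that earlier shifts in directions $j\ne i$ do not consume the $\underline{e_i}$-direction room, ensures $w_{i-1}+\mu\underline{e_i}\in \Gamma_+$ for all $\mu>0$ at the start of step $i$.

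\textbf{Step 3 (Summation).} Summing the $n$ increments produces exactly
\begin{align*}
    \ov{x}+\Bigl(1-\tfrac{1}{m}\Bigr)\sum_{i=1}^n\delta_i\ov{v_i}+\Bigl(1-\tfrac{1}{m}\Bigr)\underline{\mathbf{1}}+\tfrac{1}{m}\sum_{i=1}^n\mu_i\underline{e_i}\in\Gamma_+,
\end{align*}
which is the claim.

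The main obstacle is the single-direction shift lemma of Step 1. While the abstract barrier strategy parallels the one of \cite{mss15}, matching the algebraic form of $U_{r_i}$ to the rank-$r_i$ barrier inequality requires an explicit root-location analysis of a degree-$(r_i{+}1)$ univariate polynomial assembled from $D_{\ov{v_i}}^k q$ for $k=0,\dots,r_i$; this is where the specific inequality defining $U_{r_i}$ is forced, and once it is in hand, the iteration and summation of Steps 2--3 become routine bookkeeping.
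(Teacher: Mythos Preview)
The paper does not give its own proof of this statement: it is listed in the subsection ``Technical tools in previous work'' and is simply quoted as Corollary~5.5 of \cite{b18}, with no argument supplied. So there is no in-paper proof to compare against. Your outline---a rank-$r$ single-direction barrier shift for the mixed polynomial $q=h[v_1,\dots,v_n]$, followed by iteration over $i$ and a convexity/averaging step---is exactly the strategy Br{\"a}nd{\'e}n uses in \cite{b18}, so your plan is on the right track and not in conflict with anything the present paper does.

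One point to tighten: in Step~2 you describe the $i$-th increment as $(1-\tfrac1m)\delta_i\ov{v_i}+(1-\tfrac1m)\underline{e_i}+\tfrac1m\mu_i\underline{e_i}$ and say these weights are ``chosen so that the precondition of Step~1 is maintained.'' In Br{\"a}nd{\'e}n's argument the weights $(1-\tfrac1m)$ and $\tfrac1m$ do not arise from preserving a single-step precondition during the iteration; rather, the iteration of the unit shift $w\mapsto w+\delta_i\ov{v_i}+\underline{e_i}$ produces a point of the form $\ov{x}+\sum_i\delta_i\ov{v_i}+\underline{\mathbf{1}}$ (after all $n$ steps), and the $(1-\tfrac1m)$, $\tfrac1m$ coefficients appear only at the end, from taking a convex combination with the starting data $\ov{x}+\mu_j\underline{e_j}\in\Gamma_+$ (using that the hypothesis holds for \emph{all} $\mu_j>0$, so the scaled versions needed for the averaging are available). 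Your Step~3 summation is fine, but the accounting for the fractional coefficients should be moved from the iteration to a final convexity argument; as written, Step~2 misattributes where those factors come from.
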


\subsubsection{Upper bound for the largest root of the mixed hyperbolic polynomial}
The goal of this section is to prove Theorem~\ref{thm:mix_hyper_root}, which gives an upper bound for the mixed hyperbolic polynomial with vectors in sub-isotropic position. 
\begin{theorem}[Sub-isotropic version of Theorem 5.6 in \cite{b18}]\label{thm:mix_hyper_root}
Suppose $h\in \R[z_1,\dots,z_m]$ is hyperbolic with respect to $e\in \R^m$, and let $v_1, \dots , v_n\in \Lambda_+(h,e)$ that satisfy 
\begin{align*}
    \tr_h[v_i] \leq \epsilon, ~~\rank(v_i)\leq r~~\forall i\in [n], ~\text{and}\quad \left\|\sum_{i=1}^n v_i\right\|_h \leq \sigma. 
\end{align*}
Then,
\begin{align*}
    \lambda_{\max}(v_1,\dots,v_n)\leq \sigma \cdot \delta(\epsilon/\sigma, n, r),
\end{align*}
where $\lambda_{\max}(v_1,\dots,v_n)$ is defined in Eq.~\eqref{eq:def_lambda_mix}.
\end{theorem}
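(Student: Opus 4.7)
The plan is to mimic Br{\"a}nd{\'e}n's proof of Theorem 5.6 in \cite{b18} but to replace the direct appeal to the isotropic identity $\sum_i v_i = e$ with a convexity step driven by the weaker hypothesis $\|\sum_i v_i\|_h \leq \sigma$. First, I would reinterpret $\lambda_{\max}(v_1,\dots,v_n)$, the largest root of $h[v_1,\dots,v_n](t\ov{e}+\underline{\mathbf{1}})$, via G{\aa}rding's theorem (Theorem~\ref{thm:hyperbolic_cone_gar}): it equals the infimum of $\rho\geq 0$ such that $\rho\ov{e}+\underline{\mathbf{1}}\in \Gamma_+$, where $\Gamma_+$ is the hyperbolic cone of $h[v_1,\dots,v_n]$ with hyperbolic direction $\ov{e}$. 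So it suffices to exhibit, for each admissible $(\delta,\mu)\in U_r$, a value of $\rho$ with $\rho\ov{e}+\underline{\mathbf{1}}\in \Gamma_+$ and then take the infimum.

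Next, I would invoke Theorem~\ref{cor:branden_5.5} with the common choice $x := \epsilon\mu e \in \Lambda_{++}(e)$ and uniform parameters $\delta_i=\delta$, $\mu_i=\mu$ for all $i\in [n]$. The base-case hypothesis $\ov{x}+\mu \underline{e_i}\in \Gamma_+$ is a per-vector statement that uses only $\tr[v_i]\leq \epsilon$ and $\rank(v_i)\leq r$ through the combinatorial definition of $U_r$, so it carries over from the proof in \cite{b18} verbatim and does not use the isotropic identity at all. Using $\sum_{i=1}^n \underline{e_i}=\underline{\mathbf{1}}$, the conclusion of Theorem~\ref{cor:branden_5.5} then reads
\begin{align*}
    \epsilon\mu\,\ov{e}\;+\;\Bigl(1-\tfrac{1}{n}\Bigr)\delta\,\ov{V}\;+\;\Bigl(1+\tfrac{\mu-1}{n}\Bigr)\underline{\mathbf{1}}\;\in\;\Gamma_+,
\end{align*}
where I write $V := \sum_{i=1}^n v_i$.

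Now comes the only genuinely new step. The hypothesis $\|V\|_h\leq \sigma$ gives $\sigma e - V\in \Lambda_+$, so its trivial lift $\sigma\ov{e}-\ov{V}=\bigl(\sigma e - V,\, 0\bigr)$ belongs to $\Gamma_+$, using the containment $\Lambda_+\times \{0\}\subset \Gamma_+$ implicit in \cite{b18}. Because $\Gamma_+$ is a convex cone closed under nonnegative scaling and addition, I can add $(1-1/n)\delta$ times this lift to the display above, which swaps $\ov{V}$ for $\sigma\ov{e}$ and yields
\begin{align*}
    \bigl(\epsilon\mu+(1-\tfrac{1}{n})\delta\sigma\bigr)\ov{e}\;+\;\bigl(1+\tfrac{\mu-1}{n}\bigr)\underline{\mathbf{1}}\;\in\;\Gamma_+.
\end{align*}
Dividing by $1+\tfrac{\mu-1}{n}>0$ gives $\rho\ov{e}+\underline{\mathbf{1}}\in \Gamma_+$ with $\rho=\frac{\epsilon\mu+(1-1/n)\delta\sigma}{1+(\mu-1)/n}$. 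Taking the infimum over $(\delta,\mu)\in U_r$ and factoring out $\sigma$ produces exactly $\sigma\cdot \delta(\epsilon/\sigma,n,r)$, which is the bound sought.

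I do not expect a serious obstacle: the conceptual work was done by Br{\"a}nd{\'e}n, and the only point requiring real care is confirming that his base-case verification $\ov{\epsilon\mu e}+\mu\underline{e_i}\in \Gamma_+$ truly depends only on $\tr[v_i]\leq \epsilon$ and $\rank(v_i)\leq r$ (not on any global normalization of the $v_i$); this follows by inspecting his Section 5 derivation of $U_r$. The convexity move in the third paragraph is the only place where sub-isotropicity is consumed, and it is essentially a one-line argument because $\Gamma_+$ is an honest convex cone.
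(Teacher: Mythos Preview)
Your proposal is correct and follows essentially the same route as the paper's own proof: set $x=\epsilon\mu e$, verify the per-vector base case $\ov{x}+\mu\underline{e_i}\in\Gamma_+$ from $\tr[v_i]\leq\epsilon$ alone, apply Corollary~5.5 of \cite{b18} to obtain $\epsilon\mu\ov{e}+(1-\tfrac1n)\delta\,\ov{V}+(1+\tfrac{\mu-1}{n})\underline{\mathbf{1}}\in\Gamma_+$, and then use convexity of $\Gamma_+$ together with $\lambda_{\max}(V)\leq\sigma$ to replace $\ov{V}$ by $\sigma\ov{e}$ before optimizing over $(\delta,\mu)\in U_r$. Your convexity step is in fact spelled out more explicitly than the paper's (you invoke $\Lambda_+\times\{0\}\subset\Gamma_+$ to lift $\sigma e-V\in\Lambda_+$, whereas the paper just says ``since $\ov{e}\in\Gamma_{++}$, $\lambda_{\max}(\sum v_i)\leq\sigma$, and $\Gamma_+$ is a convex cone''), but the argument is the same.
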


\begin{proof}
For $\mu>0$, let $x:=\epsilon \mu \cdot e$ and $\mu_i := \mu$ for $i\in [n]$. Let $e_i\in \R^{n}$ be the $i$-th standard basis vector.

Then, we have
\begin{align*}
    h[v_1,\dots,v_n](\ov{x}+\mu_i\underline{e_i}) = &~ (1-\mu D_{v_i})h(\epsilon\mu e)\\
    = &~ \epsilon_d \mu^d h(e) + \mu^d\epsilon^{d-1}D_{v_i} h(e)\\
    = &~ \mu^d\epsilon^{d-1}h(e)(\epsilon - \tr_h[v_i])\\
    > &~ 0,
\end{align*}
where the first step follows from Fact~\ref{fac:dir_derivate}, the second step follows from the homogeneity of hyperbolic polynomials, and the third step follows from Fact~\ref{fac:d_trace}. 

By part (2) of Theorem~\ref{thm:hyperbolic_cone_gar}, we get that $x+\mu_i \underline{e_i}\in \Gamma_+$, the hyperbolic cone of $h[v_1,\dots,v_n]$, for all $i\in [n]$.

Then, by Theorem~\ref{cor:branden_5.5}, for any $(\delta, \mu)\in U_r$,
\begin{align*}
    \epsilon \mu \ov{e} + \left(1-\frac{1}{n}\right)\delta \sum_{i=1}^n v_i + (1+\frac{\mu-1}{n})\underline{\mathbf{1}}\in \Gamma_+,
\end{align*}
which implies 
\begin{align*}
    \frac{\epsilon \mu \ov{e} + \left(1-\frac{1}{n}\right)\delta \sum_{i=1}^n v_i }{1+\frac{\mu-1}{n}}+\underline{\mathbf{1}}\in \Gamma_+,
\end{align*}
by the homogeneity of $\Gamma_+$. Since $\ov{e}\in \Gamma_{++}$, $\lambda_{\max}(\sum_{i=1}^n v_i)\leq \sigma$, and $\Gamma_+$ is a convex cone, we have
\begin{align*}
    \frac{\left(\epsilon \mu + \left(1-\frac{1}{n}\right)\delta\sigma\right) }{1+\frac{\mu-1}{n}}\ov{e} +\underline{\mathbf{1}}\in \Gamma_+.
\end{align*}
Hence, by Remark 5.1 in \cite{b18}, 
\begin{align*}
    \lambda_{\max}(v_1,\dots,v_n) = \inf_{\rho>0}~~ \rho \ov{e} + \underline{\mathbf{1}}\in \Gamma_+.
\end{align*}
Hence, we conclude that
\begin{align*}
    \lambda_{\max}(v_1,\dots,v_n)\leq &~ \inf_{(\delta, \mu)\in U_r}~~\frac{\left(\epsilon \mu + \left(1-\frac{1}{n}\right)\delta\sigma\right) }{1+\frac{\mu-1}{n}}\\
    = &~ \sigma \cdot \delta(\epsilon/\sigma, n, r).
\end{align*}
\end{proof}

\subsection{Discrepancy result with high probability}

The goal of this section is to prove Theorem~\ref{thm:eight_deviations}, which proves the rank-1 case of the hyperbolic Spencer conjecture (Conjecture~\ref{conj:hyperbolic_spencer}).

\begin{theorem}[Eight deviations suffice]\label{thm:eight_deviations}
Given $x_1, x_2, \cdots, x_n \in \R^m$ such that $\rank(x_i)\leq 1$ for all $i\in [n]$. Let $h$ be an $m$-variable, degree-$d$ hyperbolic polynomial with respect to $e$. Let $\sigma = ( \sum_{i=1}^n \|x_i\|_h^2 )^{1/2}$. Then, there exists a sign vector $r \sim \{-1, 1\}^n$ such that
\begin{align*}
    \left\| \sum_{i=1}^n r_i x_i \right\|_h \leq  8\sigma 
\end{align*}
holds.
\end{theorem}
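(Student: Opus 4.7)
The plan is to invoke Lemma~\ref{lem:p_norm_upper_bound} in the degenerate regime $s=1$, $q=1$, where the rank-dependent factor $s^{1/(2q)}$ disappears and the prefactor $\sqrt{2q-1}$ equals one. Setting $X:=\sum_{i=1}^n r_i x_i$, Lemma~\ref{lem:p_norm_upper_bound} applied with $s=1$, $q=1$ yields
\begin{align*}
\bigl(\E_{r\sim\{\pm1\}^n}[\|X\|_{h,2}^{2}]\bigr)^{1/2}\;\leq\;\sqrt{2\cdot 1-1}\cdot 1^{1/2}\cdot\Bigl(\sum_{i=1}^n\|x_i\|_h^2\Bigr)^{1/2}\;=\;\sigma.
\end{align*}
By Fact~\ref{fac:norm_h_bound_by_norm_h_q} one has $\|X\|_h\le\|X\|_{h,2}$ pointwise, so $\E_{r}[\|X\|_h^2]\le \sigma^2$.

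Next, I would apply Markov's inequality to the nonnegative random variable $\|X\|_h^2$ at level $(8\sigma)^2=64\sigma^2$:
\begin{align*}
\Pr_{r\sim\{\pm1\}^n}\bigl[\|X\|_h>8\sigma\bigr]\;=\;\Pr_{r\sim\{\pm1\}^n}\bigl[\|X\|_h^2>64\sigma^2\bigr]\;\leq\;\frac{\E_{r}[\|X\|_h^2]}{64\sigma^2}\;\leq\;\frac{1}{64}.
\end{align*}
Therefore $\Pr_{r}[\|X\|_h\le 8\sigma]\ge 63/64>0$, so a sign vector $r\in\{-1,1\}^n$ achieving $\bigl\|\sum_{i=1}^n r_i x_i\bigr\|_h\le 8\sigma$ must exist.

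The only substantive point is the specialization step: the $s^{1/(2q)}$ factor in Lemma~\ref{lem:p_norm_upper_bound} arose from using Fact~\ref{fac:norm_h_q_bound_by_norm_h} to bound $\|x_n\|_{h,2k_n}\le s^{1/(2k_n)}\|x_n\|_h$ in the last step of the recursion, and for rank-$1$ inputs this reduces to an equality with factor one. Consequently one does not need to push $q$ up to the $\log d$ regime to control this term---the simplest choice $q=1$ already produces a dimension-free, degree-free second-moment estimate on $\|X\|_h$. I expect no genuine obstacle: once the rank-$1$ form of the expected $2q$-norm bound is in hand, only an elementary second-moment argument remains, and the constant $8$ is comfortable slack (any constant strictly exceeding $1$ would suffice for existence by the same route, so the name ``eight deviations suffice'' is purely an allusion to Spencer's six).
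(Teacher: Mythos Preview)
Your proposal is correct, and it takes a slightly different and more elementary route than the paper. The paper first bounds the first moment $\E[\|X\|_h]\le\sigma$ (via Lemma~\ref{lem:p_norm_upper_bound} with $s=1,q=1$ together with Fact~\ref{fac:norm_h_bound_by_norm_h_q} and Lyapunov), then uses the Khinchin--Kahane inequality (Claim~\ref{clm:hyperbolic_2_1_norm}) to pass to the second moment, getting $\E[\|X\|_h^2]\le 2\sigma^2$, and finally invokes the Ledoux--Talagrand concentration (Corollary~\ref{cor:hyperbolic_prob}) to obtain $\Pr[\|X\|_h>8\sigma]\le 2/e$. You instead observe that the $q=1$ case of Lemma~\ref{lem:p_norm_upper_bound} already controls $\E[\|X\|_{h,2}^2]$ directly, which dominates $\E[\|X\|_h^2]$ pointwise, and then a single application of Markov finishes. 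Your route avoids both Khinchin--Kahane and Ledoux--Talagrand, and even yields the sharper second-moment bound $\E[\|X\|_h^2]\le\sigma^2$; what the paper's route buys in exchange is an explicit exponential tail bound rather than mere existence.
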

\begin{proof}
Similar to the proof of Theorem~\ref{thm:main}, we first have
\begin{align*}
    \E_{ r \sim \{ \pm 1 \}^n }\left[\left\|\sum_{i=1}^n r_i x_i\right\|_h\right]
    \leq & ~ \left( \E_{ r \sim \{ \pm 1 \}^n } \left[\left\|\sum_{i=1}^n r_i x_i\right\|_{h,2q}^{2q}\right] \right)^{1/(2q)}\\
    \leq & ~ \sqrt{2q-1} \cdot \left( \sum_{i=1}^n\|x_i\|_{h}^2 \right)^{1/2}\\
    = &~ \sqrt{2q-1} \cdot \sigma,
\end{align*}
where the first step follows from Eq.~\eqref{eq:bound_expect_h_norm}. 

By setting $q=1$, we have
\begin{align*}
    \E_{ r \sim \{ \pm 1 \}^n }\left[\left\|\sum_{i=1}^n r_i x_i\right\|_h\right]
    \leq  \sigma.
\end{align*}

By Claim~\ref{clm:hyperbolic_2_1_norm}, 
\begin{align*}
    \E_{ r \sim \{ \pm 1 \}^n }\left[\left\|\sum_{i=1}^n r_i x_i\right\|_h^2\right] 
    \leq 2\left( \E_{ r \sim \{ \pm 1 \}^n } \left[\left\|\sum_{i=1}^n r_i x_i\right\|_h\right] \right)^2 \leq 2\sigma^2.
\end{align*}
Then, by Corollary~\ref{cor:hyperbolic_prob},
\begin{align*}
    \Pr_{ r \sim \{ \pm 1 \}^n } \left[ \left\| \sum_{i=1}^n r_i x_i\right\|_h > t \right] \leq &~ 2 \exp \left(-\frac{t^2}{32 \E_{ r \sim \{ \pm 1 \}^n }[ \| \sum_{i=1}^n r_i x_i \|_h^2 ] }\right)\\
    \leq &~ 2\exp\left(-\frac{t^2}{64 \sigma^2}\right).
\end{align*}
By choosing $t=8\sigma$, we have
\begin{align*}
    \Pr_{ r \sim \{ \pm 1 \}^n } \left[ \left\| \sum_{i=1}^n r_i x_i\right\|_h > 8\sigma \right]\leq 2/e.
\end{align*}
Therefore, with probability $1-2/e$, we have
\begin{align*}
    \left\| \sum_{i=1}^n r_i x_i\right\|_h \leq 8\sigma,
\end{align*}
which proves the theorem.
\end{proof}

\begin{remark}
It is interesting to apply Theorem~\ref{thm:eight_deviations} to determinant polynomial $h(x)=\det(X)$. It implies that for rank-1 matrices $X_1,\dots,X_n\in \R^{d\times d}$, 
\begin{align*}
    \Pr_{r\sim \{\pm 1\}^n}\left[\left\|\sum_{i=1}^n r_i X_i\right\|>t\right]\leq 2\exp\left(-\frac{t^2}{64 \sigma^2}\right),
\end{align*}
for $\sigma^2=\sum_{i=1}^n \|X_i\|^2$.

This result is in fact incomparable to the matrix Chernoff bound \cite{tro15}, which shows that
\begin{align*}
        \Pr_{r\sim \{\pm 1\}^n}\left[\left\|\sum_{i=1}^n r_i X_i\right\|>t\right]\leq 2d\cdot \exp\left(-\frac{t^2}{2\wt{\sigma}^2}\right),
\end{align*}
where $\wt{\sigma}^2=\|\sum_{i=1}^n X_i^2\|$. Because we only know the following relation between $\sigma$ and $\wt{\sigma}$ \cite{tro15}:
\begin{align*}
    \wt{\sigma}^2 \leq \sigma^2 \leq d\cdot \wt{\sigma}^2.
\end{align*}
\end{remark}

\else
\appendix
\input{basic_hyperbolic}
\fi
\ifdefined\isarxiv
\section*{Acknowledgements}
We thank the anonymous reviewers for helpful comments.
The authors would like to thank Petter Br{\"a}nd{\'e}n and James Renegar  for many useful discussions about the literature of hyperbolic polynomials.The authors would like to thank Yin Tat Lee and James Renegar, Scott Aaronson  for encouraging us to work on this topic. The authors would like to thank Dana Moshkovitz for giving comments on the draft.

Ruizhe Zhang was supported by NSF Grant CCF-1648712.

\addcontentsline{toc}{section}{References}
\fi
\ifdefined\isarxiv
\bibliographystyle{alpha}
\bibliography{ref}
\fi

\end{document}